\documentclass[11pt]{article}

\setcounter{tocdepth}{1}
\usepackage[margin=1in]{geometry}  
\usepackage{graphicx}              
\usepackage{amsmath}               
\usepackage{amsthm}
\usepackage{amsfonts}              
\usepackage{amssymb}
\usepackage{caption}
\usepackage{subfigure}
\usepackage[nottoc,numbib]{tocbibind}
\usepackage{verbatim}
\usepackage{bbm}
\usepackage{setspace}
\usepackage{url}
\usepackage{hyperref}

\newcommand{\NN}{\mathbb{N}}      
\newcommand{\PP}{\mathbb{P}}
\newcommand{\EE}{\mathbb{E}}
\newcommand{\HH}{\mathbb{H}}
\newcommand{\CR}{\mathrm{CR}}
\newcommand{\II}{\mathbbm{1}}

\newcommand{\abs}[1]{\lvert#1\rvert}

\makeatletter \renewenvironment{proof}[1][\proofname] {\par\pushQED{\qed}\normalfont\topsep6\p@\@plus6\p@\relax\trivlist\item[\hskip\labelsep\bfseries#1\@addpunct{.}]\ignorespaces}{\popQED\endtrivlist\@endpefalse} \makeatother


\newtheorem{theorem}{Theorem}[section]
\newtheorem*{theorem*}{Theorem}
\newtheorem{lemma}[theorem]{Lemma}
\newtheorem{proposition}[theorem]{Proposition}
\newtheorem*{proposition*}{Proposition}
\newtheorem{corollary}[theorem]{Corollary}
\newtheorem*{corollary*}{Corollary}

\newtheorem{claim}[theorem]{Claim}
\newtheorem{question}[theorem]{Question}

\theoremstyle{remark}
\newtheorem*{remark}{Remark}

\theoremstyle{definition}
\newtheorem*{definition}{Definition}

\begin{document}

\author{Juhan Aru\thanks{\newline Affiliation: ENS Lyon (UMPA)\newline
Email: juhan.aru@cantab.net\newline Partially supported by the ANR grant MAC2 10-BLAN-0123.}}
\title{KPZ relation does not hold for the level lines and SLE$_\kappa$ flow lines of the Gaussian free field}
\date{}
\maketitle
\abstract{In this paper we mingle the Gaussian free field, the Schramm-Loewner evolution and the KPZ relation in a natural way, shedding new light on all of them. Our principal result shows that the level lines and the SLE$_\kappa$ flow lines of the Gaussian free field do not satisfy the usual KPZ relation. In order to prove this, we have to make a technical detour: by a careful study of a certain diffusion process, we provide exact estimates of the exponential moments of winding of chordal SLE curves conditioned to pass nearby a fixed point. This extends previous results on winding of SLE curves by Schramm.
}

\date{}
\maketitle

{
\small
\tableofcontents
}

\newpage

\section{Introduction}

This paper combines in a way three beautiful mathematical concepts, all having three-letter abbreviations: the Gaussian free field (GFF), the Schramm-Loewner evolution (SLE) and the KPZ relation. 

The background motivation comes from statistical physics. Statistical physics models on Euclidean lattices are often difficult to study. Even when for the self-avoiding walk on the hexagonal lattice we know the connective constant \cite{HUGO}, we are for example only beginning to gather any rigorous results at all on the square lattices. Also, we still hope for proofs of critical percolation exponents on the same lattice. 

However, in the eighties three physicists Knizhnik, Polyakov and Zamolodchikov \cite{KPZ} came up with a far-reaching strategy for studying these models. The proposed plan was to study them in a random environment, or in what they called the Quantum Gravity regime, and then translate the results back to the Euclidean setting. This was a fruitful idea as the study of many models becomes easier in these random environments, and even more - the so called KPZ relation gives an exact translation for critical exponents back to the Euclidean case \cite{DUP1,DUP2,CUR}.

Mathematically, however, the understanding of the KPZ relation is still scarce. Mainly, the problem is that in higher than one dimension, we do not yet have a suitable continuum model for the random environment that would allow understanding of the KPZ relation. Even though random planar maps have been shown to converge to a candidate random metric space \cite{MIER,LG}, we are still missing a conformal structure on these spaces, thus making it hard to relate models on these spaces with our usual models on Euclidean lattices.

Still, recently there has been progress in understanding the KPZ relation. In one dimension, we have a quite good understanding \cite{BS}. For two dimensions, a more mundane version of the random environment has helped us. Namely, whereas ideally we would like to establish the KPZ relation in a random metric space with a certain topology, we can already give meaning to the KPZ relation when we model the random environment by a random measure on a two-dimensional domain. This measure is called the Liouville measure \cite{DS,GAR}.

In this context of the Liouville measure the KPZ relation can be shown to rigorously relate Euclidean and Quantum fractal dimensions \cite{DS,RV}. There is, however, a little catch - all the proofs only work for deterministic sets and sets independent of the random environment. However, in at least a few cases the statistical physics models are coupled with the random environment, as for example in the Ising model. Though expected, it is not a priori clear whether our sets of interest, as for example the interface boundaries, will become independent in the continuum limit. Hence it is also interesting to ask to which extent the KPZ relation holds for sets depending on the measure.

In this article, we treat the case of most natural sets coupled with the Liouville measure - the SLE$_\kappa$ curves corresponding to interface boundaries in statistical physics models. One way of coupling the SLE lines with the GFF and the Liouville measure is using a conformal welding of two Liouville quantum surfaces \cite{ZIPPER,DSSLE}. This ought to correspond to gluing random planar maps in the discrete setting. We already know that in this case one recovers a KPZ relation, if instead of volume measures one considers boundary measures on the SLE \cite{ZIPPER,DSSLE}. In what follows, we show that on the other hand the usual KPZ relation does not hold for the SLE$_\kappa$ with $0 < \kappa < 8$ coupled with the GFF as level lines ($\kappa = 4$) or flow lines of the field. Notice that this implies that the KPZ relation is of very different character than the Kaufman's theorem on dimension doubling of the Brownian motion. It can also been seen as evidence that, indeed, in the continuum limit the interface boundaries have to become independent of the random environment. 

On the way towards the final proof, we have to find new precise estimates of the exponential moments of winding of chordal SLE curves around points conditioned to be close to the curve. This goes beyond Schramm's analysis in his seminal paper introducing the SLE curves \cite{SCHRAMM} and could be of independent interest. 
\begin{figure}[h]

\captionsetup{width=0.8\textwidth}
\centering
\begin{minipage}[b]{0.4\textwidth}
\centering\includegraphics[width=\textwidth]{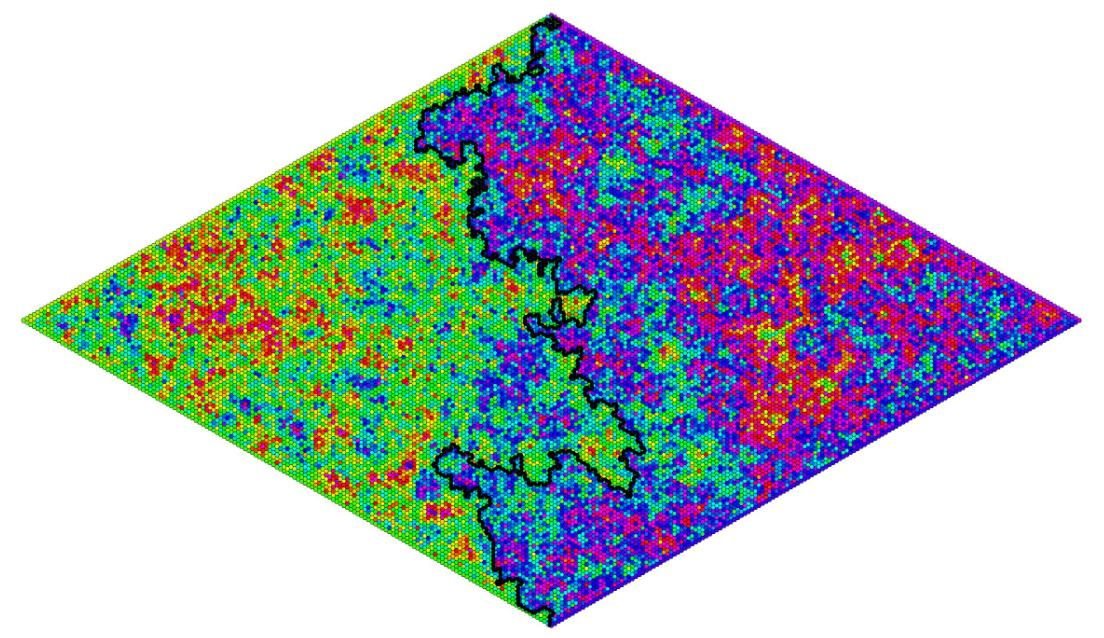}
\end{minipage}
\hspace{0.5cm}
\begin{minipage}[b]{0.4\textwidth}
\centering
\includegraphics[width=\textwidth]{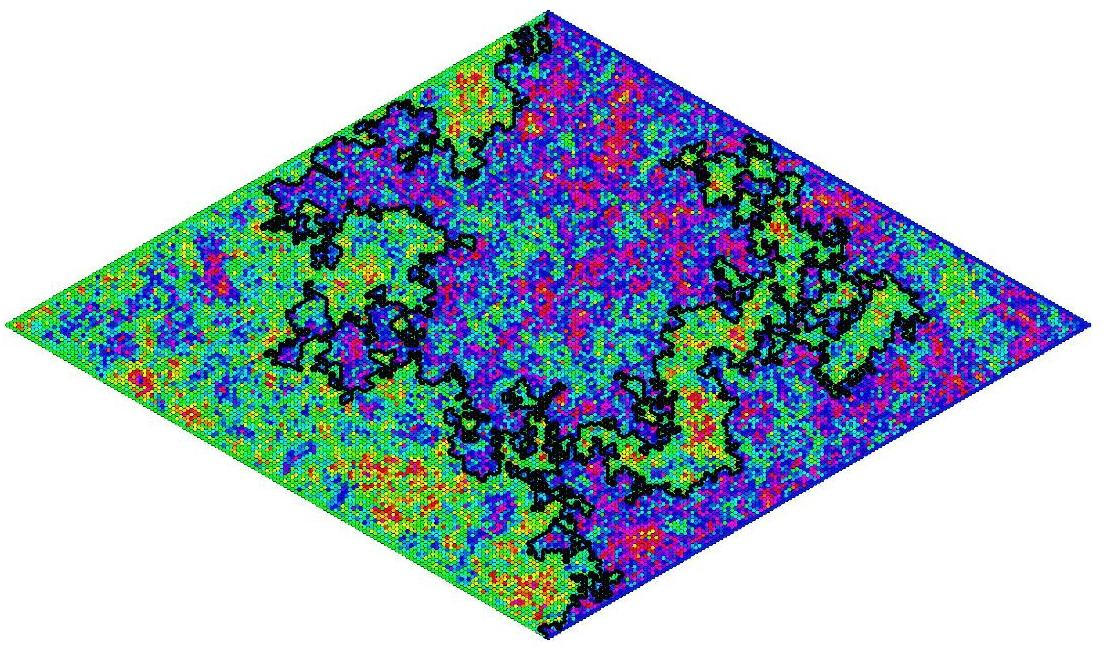}
\end{minipage}
\caption{{\small On the left, we see the flow line coupling of the SLE$_{8/3}$ and on the right the zero level line coupling. The colours indicate the height of the GFF. Notice that whereas the zero level line - as by definition it should - really moves along the boundary of positive and negative heights, SLE$_{8/3}$ also keeps close to this boundary.}}
\label{fig1}
\end{figure}

\paragraph{Outline and main results}\mbox{}\\
We start the paper by giving a concise description of the key constructions of the paper: the GFF, the SLE, the Liouville measure. Then we discuss at more length the different versions of the KPZ relation in the literature \cite{DS} \cite{RV} and propose yet a third one. Indeed, whereas we set off to prove our claim for the almost sure Hausdorff version, an intermediate step of determining what we call the expected quantum Minkowski dimension of these lines became useful. We also discuss how one can come up with easier, but less natural counterexamples for the KPZ relation.

Next, in section 3 we study the introduced notion of the expected Minkowski dimension. We prove the relevant KPZ relation and show that, as expected, the expected quantum Minkowski dimension is always larger than the quantum Hausdorff dimension introduced in \cite{RV}.

After these preliminaries, we are ready to attack the zero level lines in section 4. There is a simple proof, a matter of only putting our intuition on a rigorous grounding: the fact that the GFF is forced to be low near the zero level line, means that the Liouville measure is small, hence it is easier to cover the zero level line and both the expected quantum Minkowski and quantum Hausdorff dimensions are smaller than predicted by the KPZ relation.

Handling SLE$_\kappa$ flow lines is considerably harder and needs some technical work on the SLE curves. In section 5 we derive up to multiplicative constants the exponential moments of the winding of the chordal SLE curves, conditioned to arrive close at points. More explicitly, we prove the following theorem:

\begin{theorem}

Let $\CR_0$ be the conformal radius of a fixed point $z_0$ in the upper half plane. Fix $0 <  \kappa < 8$ and let $\tau$ be the time that SLE first cuts $z_0$ from infinity. Denote by $H_\tau$ the SLE slit domain component containing $z_0$. Then, for $\epsilon > 0$ sufficiently small, conditioned on $\CR(z_0,H_\tau) \in [\epsilon, C\epsilon]$ with $C > 1$, the exponential moments of the winding $w(z_0)$ around the point $z_0$ are given by $e^{\lambda w(z_0)} \asymp \epsilon^{-\lambda^2\kappa/8}$, where the implied constants depend on $\kappa, \lambda$ and for fixed $\kappa$ can be chosen uniform for $\abs{\lambda} < \lambda_0$ for any choice of $\lambda_0 > 0$.

\end{theorem}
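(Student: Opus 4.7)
The plan is to reduce the problem to a one-dimensional diffusion on $(0,\pi)$ via the Loewner equation, construct an explicit Feynman--Kac martingale, and read off the exponential moments by a Girsanov change of measure.

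Writing $Z_t = g_t(z_0) - U_t = X_t + iY_t$ and $w_t = \operatorname{Im}\log g_t'(z_0)$, the Loewner equation yields
\[
dX_t = \frac{2X_t}{\abs{Z_t}^2}dt - \sqrt{\kappa}\,dB_t,\quad dY_t = -\frac{2Y_t}{\abs{Z_t}^2}dt,\quad dw_t = \frac{4X_tY_t}{\abs{Z_t}^4}dt,\quad d\log\CR(z_0,H_t) = -\frac{4Y_t^2}{\abs{Z_t}^4}dt.
\]
The natural reparametrization is $\tau := \tfrac14\log(\CR_0/\CR(z_0,H_t))$, under which $\log\CR$ is a linear function of $\tau$. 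Setting $\theta_\tau := \arg Z_\tau \in (0,\pi)$ and performing two successive time changes (first $ds = dt/Y_t^2$, then $d\tau = \sin^4\theta\,ds$), one gets the autonomous SDEs
\[
d\theta_\tau = (\kappa-4)\cot\theta\,d\tau + \sqrt{\kappa}\,dW_\tau, \qquad dw_\tau = 4\cot\theta\,d\tau,
\]
for a standard Brownian motion $W$. For $0<\kappa<8$ both $0$ and $\pi$ are accessible exit boundaries of $\theta_\tau$; the first hitting time $\tau_\infty$ of $\{0,\pi\}$ is precisely the image of the SLE capture time. Correspondingly, the event $\CR(z_0,H_\tau)\in[\epsilon,C\epsilon]$ becomes $\tau_\infty\in[T_\epsilon - \tfrac14\log C,\, T_\epsilon]$ with $T_\epsilon := \tfrac14\log(\CR_0/\epsilon)$.

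Next I look for $F>0$ on $(0,\pi)$ and $\alpha\in\RR$ such that $M_\tau := e^{\lambda w_\tau - 4\alpha\tau}F(\theta_\tau)$ is a local martingale. By It\^o this amounts to
\[
\tfrac{\kappa}{2}F'' + (\kappa-4)\cot\theta\,F' + (4\lambda\cot\theta - 4\alpha)F = 0,
\]
which the ansatz $F(\theta) = \sin^a\theta\cdot e^{b\theta}$ solves with $a = (8-\kappa)/\kappa$, $b = -\lambda$ and $\alpha = \alpha(\lambda) := (\kappa-8)/8 + \lambda^2\kappa/8$. Since $F$ is positive on $(0,\pi)$ and vanishes at both boundary points, it is, up to a scalar, the ground-state eigenfunction of the associated Dirichlet Feynman--Kac problem and $4\alpha$ is its top eigenvalue. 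Defining $\tilde{\PP}$ by $d\tilde{\PP}/d\PP|_{\mathcal{F}_{\tau\wedge\tau_\infty}} = M_{\tau\wedge\tau_\infty}/M_0$, Girsanov shifts the drift of $\theta$ to $4\cot\theta - \kappa\lambda$; by Feller's test the boundary becomes entrance under $\tilde{\PP}$, so $\theta$ is positive recurrent with stationary density proportional to $\sin^{8/\kappa}\theta\cdot e^{-2\lambda\theta}$. Applied up to a fixed time $T$, the change of measure gives
\[
\EE_\PP[e^{\lambda w_T}\II_{T < \tau_\infty}] = F(\theta_0)\,e^{4\alpha T}\,\EE_{\tilde{\PP}}[1/F(\theta_T)],
\]
and ergodicity of $\theta$ under $\tilde{\PP}$ (together with integrability of $\sin\theta\cdot e^{-\lambda\theta}$ on $(0,\pi)$) bounds $\EE_{\tilde{\PP}}[1/F(\theta_T)]$ above and below by positive constants, uniformly in $T$ large and in $\abs{\lambda}\le\lambda_0$; hence $\EE_\PP[e^{\lambda w_T}\II_{T<\tau_\infty}]\asymp e^{4\alpha T}$.

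The hard part will be to pass from this estimate, stopped at a fixed time $T$, to the expectation on the short window $\tau_\infty\in[T_1,T_2]$. Applying the strong Markov property at $T_1 := T_\epsilon - \tfrac14\log C$ gives
\[
\EE[e^{\lambda w_{\tau_\infty}}\II_{\tau_\infty\in[T_1,T_2]}] = \EE[e^{\lambda w_{T_1}}\II_{T_1<\tau_\infty}\, h(\theta_{T_1})],
\]
where $h(\theta') := \EE_{\theta'}[e^{\lambda w_{\tau_\infty}}\II_{\tau_\infty\le (\log C)/4}]$. What remains is to show that $h$ is bounded above and below by positive constants on compact subsets of $(0,\pi)$---the delicate point being that $w_{\tau_\infty}$ picks up arbitrarily large winding contributions as $\theta_\tau$ approaches the boundary, where $\cot\theta$ blows up---and that $\theta_{T_1}$ lies in such a compact subset with comparable probability. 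Both reduce to sharp estimates for $\theta_\tau$ near the boundary, obtainable via Bessel-type comparison and the spectral theory of $\mathcal{L}+4\lambda\cot\theta$. Granting these, $\EE[e^{\lambda w_{\tau_\infty}}\II_{\tau_\infty\in[T_1,T_2]}]\asymp e^{4\alpha T_\epsilon}\asymp\epsilon^{-\alpha(\lambda)}$; dividing by the same quantity at $\lambda=0$, which up to multiplicative constants equals $\PP[\CR\in[\epsilon,C\epsilon]]\asymp\epsilon^{(8-\kappa)/8}$, and using $\alpha(\lambda)-\alpha(0)=\lambda^2\kappa/8$, one obtains the claimed $\asymp\epsilon^{-\lambda^2\kappa/8}$.
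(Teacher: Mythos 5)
Your main-term argument is correct and takes a genuinely different route from the paper. Both of you first reduce to a one-dimensional diffusion driven by the argument process (the paper works with $\alpha=2\theta$ on $(0,2\pi)$ in the time scale $s=4\tau$, via Lemma \ref{lemB}), but the strategies then diverge. The paper expands the survival probability $\PP_x(\tau>T)$ in the eigenbasis of the Green operator, proves boundary-regularity estimates on those eigenfunctions, and uses them to show that the conditioned process is, up to an exponentially small drift error, the explicit everlasting-survival $h$-transform; the Gaussian of variance $\asymp\kappa T/4$ is then read off by rearranging the conditioned SDE. You instead absorb the potential $4\lambda\cot\theta$ directly into a Feynman--Kac martingale $M_\tau = e^{\lambda w_\tau - 4\alpha\tau}F(\theta_\tau)$ and produce the \emph{explicit} ground state $F(\theta)=\sin^{(8-\kappa)/\kappa}\theta\,e^{-\lambda\theta}$ with eigenvalue $4\alpha(\lambda)=(\kappa-8)/2 + \lambda^2\kappa/2$, then change measure. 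I checked the ODE; your values of $a,b,\alpha$ are right, and at $\lambda=0$ your $F$ and $4\alpha(0)$ reproduce the paper's $\phi_0(x)=\sin^{8/\kappa-1}(x/2)$ and $\lambda_0=1-\kappa/8$ after the change of variables. This is cleaner and more direct: the exponent $\alpha(\lambda)-\alpha(0)=\lambda^2\kappa/8$ falls out at once, with no eigenfunction bootstrap and no error-term estimate for the conditioned generator.

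The endgame you defer, however, is not a routine reduction but precisely the substance of the second half of the paper's proof, and as written your proposal has a genuine gap there. Concretely you need a uniform-in-$\theta'$ upper bound on $h(\theta')=\EE_{\theta'}[e^{\lambda w_{\tau_\infty}}\II_{\tau_\infty\le(\log C)/4}]$. For $\kappa\neq 4$ the paper obtains this by rearranging the unconditioned SDE so that $\int\cot$ is an affine function of the driving Brownian motion, then applying optional stopping; for $\kappa=4$ this identity degenerates (the drift vanishes, so $w$ is no longer controlled by $B$ in this way), the ``Bessel-type comparison'' you invoke does not apply to the untilted process, and the paper needs a bespoke Brownian-excursion/Bessel-$3$-bridge argument. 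You also need a matching lower bound; the paper gets this by the reflection symmetry $\theta\mapsto\pi-\theta$ combined with antisymmetry of $\cot$, which turns upper bounds on $\EE[e^{+\lambda w}]$ into lower bounds on $\EE[e^{-\lambda w}]$ --- your outline does not mention this and it is not clear how to obtain a uniform lower bound otherwise. Finally, the Girsanov step requires that $M$ be a true martingale on $[0,T\wedge\tau_\infty]$ (where $F\to 0$ while $e^{\lambda w}$ can diverge near the boundary), and the estimate $\EE_{\tilde\PP}[1/F(\theta_T)]\asymp 1$ needs quantitative, uniform-in-$T$ ergodicity with control of the unbounded integrand $1/F$ near $\{0,\pi\}$ --- which is exactly what the paper's eigenfunction-regularity lemmas are there to provide. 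In short: the martingale construction for the leading order is an elegant and correct alternative, but the passage from the fixed-time expectation to the exit-time window, which you label ``granting these,'' is the hard part and is not closed.
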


We do this by using a diffusion process related to SLE already in previous papers \cite{BEF,LSW,WSS}. We need, however, to study this process in finer detail, and provide good control of the eigenvalues and eigenfunctions of the respective generator. The whole section is a bit technical, but both the result and methods could be of independent interest.

Thereafter, in section 6 we find the expected quantum Minkowski dimension of the SLE$_\kappa$ flow lines by introducing a non-standard Whitney decomposition that is based on the conformal radius instead of the Euclidean radius. This allows us to work off the curve, where things get singular, and to use the results on winding obtained.  The final result, containing also the previous work on zero level lines, can be stated as follows:

\begin{theorem}
Consider the Liouville measure with $\gamma < 2$ in the unit disc and let $0 < \kappa < 8$. Then the expected quantum Minkowski dimension of the SLE$_\kappa$ flow lines is given by $q_{M,E} < 1$ satisfying
\[d_M = (2+\gamma^2/2)q_{M,E} - \gamma^2(1-\kappa/4)^2q_{M,E}^2/2\]
where $d_M$ is the Minkowski dimension of the respective SLE curve. 
\end{theorem}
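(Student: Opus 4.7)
The plan is to combine a Whitney-type decomposition adapted to the conformal radius of the SLE-slit domain with the winding estimates of Theorem~1.1 and the imaginary-geometry coupling of Miller--Sheffield, in order to compute the expected $q$-th Liouville moment of cells covering the flow line.

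Fix the SLE$_\kappa$ flow line $\eta$ in $\mathbb{D}$. For each $z$ let $H_\tau(z)$ denote the connected component of $\mathbb{D}\setminus\eta[0,\tau]$ containing $z$ at the moment $\tau$ when $\eta$ disconnects $z$ from $\partial\mathbb{D}$, and let $\phi:H_\tau(z)\to\HH$ be the corresponding Loewner map. I would build a Whitney decomposition whose cells at scale $\epsilon$ are centered at points $z$ with $\CR(z,H_\tau(z))\asymp\epsilon$; by Koebe's $1/4$ theorem such cells have Euclidean diameter $\asymp\epsilon$, and the expected number of scale-$\epsilon$ cells intersecting the curve is $\asymp\epsilon^{-d_M}$ with $d_M=1+\kappa/8$. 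For a single cell $B$ centered at such a $z$, the imaginary-geometry coupling implies that, conditionally on $\eta$, the GFF on $H_\tau(z)$ is a zero-boundary GFF on $H_\tau(z)$ plus the harmonic function $\chi\,\arg\phi'$ with $\chi=(4-\kappa)/(2\sqrt{\kappa})$. Since the circle averages of a zero-boundary GFF at scale equal to the conformal radius have bounded variance, the scaling/covariance of the Liouville measure yields
\[\mathbb{E}[\mu_\gamma(B)^q\mid\eta]\asymp \epsilon^{q(2+\gamma^2/2)}\,e^{q\gamma\chi\, w(z)},\]
with $w(z)\asymp \arg\phi'(z)$ the winding of $\eta$ around $z$.

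Integrating over $\eta$ and applying Theorem~1.1 with $\lambda=q\gamma\chi$, together with the identity $\chi^2\kappa/8=(1-\kappa/4)^2/2$, produces
\[\mathbb{E}[\mu_\gamma(B)^q]\asymp \epsilon^{q(2+\gamma^2/2)-q^2\gamma^2(1-\kappa/4)^2/2}.\]
Summing over the $\asymp\epsilon^{-d_M}$ cells at scale $\epsilon$ and feeding this into the expected quantum Minkowski KPZ relation established in Section~3, the unique $q\in(0,1)$ for which the total content stays of order~$1$ as $\epsilon\to 0$ is the solution of
\[d_M=(2+\gamma^2/2)q-\gamma^2(1-\kappa/4)^2q^2/2,\]
which is the stated formula. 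The case $\kappa=4$ is already handled by the simpler argument of Section~4 and is consistent with $\chi=0$ above.

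The main technical obstacle is producing these $\asymp$ estimates uniformly in the Whitney cell. The upper bound on $\sum_B \mathbb{E}[\mu_\gamma(B)^q]$ follows from the above provided one controls the contribution of atypical windings, which is where the uniformity in $\lambda$ in Theorem~1.1 is crucial. The matching lower bound is harder: it requires showing that a positive proportion of cells contribute Liouville mass comparable to the heuristic, which in turn calls for the two-sided bound on winding tails from Theorem~1.1 together with some form of approximate local independence of the zero-boundary GFF restricted to well-separated cells. Finally, one must verify the a~priori bound $q_{M,E}<1$, which is what keeps the $q$-th moment estimates inside the regime where the Liouville mass has finite moments and the KPZ machinery applies.
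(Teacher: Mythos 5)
Your route is the same as the paper's: a conformal--radius Whitney decomposition, per-cell Liouville estimates, and the winding theorem applied with $\lambda=q\gamma\chi$ (your identity $\chi^2\kappa/8=(1-\kappa/4)^2/2$ is exactly the right one, and your cell-counting heuristic $\asymp\epsilon^{-(1+\kappa/8)}$ matches the paper's use of the one-point estimate). The genuine gap is the passage from sums over CR-Whitney cells to the quantity that actually defines $q_{M,E}$, namely the dyadic content $\sum_{S_i}\II(S_i\cap\eta\neq\emptyset)\tilde\mu(S_i)^q$, whose squares \emph{intersect} the curve. Your cells by construction sit at distance comparable to their size from $\eta$, so ``summing over the cells at scale $\epsilon$'' bounds neither side of the covering content, which also carries the Liouville mass in an arbitrarily thin neighbourhood of the curve where the winding correction is singular and none of your per-cell estimates apply. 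The paper needs two separate arguments at exactly this point: for the upper bound, the fact that the flow line itself carries zero Liouville mass almost surely (Claim \ref{FLZ}, proved by a global no-loss-of-mass argument), after which the elementary inequality $\tilde\mu(S_i)^q\le\sum_{Q\subset S_i}\tilde\mu(Q)^q$ for $q<1$ dominates each covering square by the Whitney squares inside it; for the lower bound, a geometric step (using the two-sided SLE Green's function estimate and demanding $\CR(z_0)\in[l(S)/3,\,l(S)/2]$ at no cost in the exponent) that forces definite CR-Whitney squares adjacent to the centre of every covering square, so the covering content dominates a constant proportion of the Whitney content. Neither step appears in your outline, and ``feeding this into the expected quantum Minkowski KPZ relation of Section 3'' cannot replace them: that relation is proved only for field-independent sets and is precisely what fails here; all that is available is the definition of $q_{M,E}$ applied to these covering estimates.

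Two smaller points. Your per-cell formula $\EE[\mu_\gamma(B)^q\mid\eta]\asymp\epsilon^{q(2+\gamma^2/2)}e^{q\gamma\chi w(z)}$ tacitly assumes the winding is constant across the cell up to an additive constant; that needs a proof (the paper uses Borel--Carath\'eodory plus distortion estimates, Lemma \ref{WB}), and the two-sided moment bound itself needs Jensen for the upper direction and Kahane's convexity inequality together with an additive Green's-function estimate inside the cell (Lemma \ref{lemG}) for the lower direction --- ``bounded variance of the circle average at the conformal-radius scale'' is not by itself enough. Finally, the ``approximate local independence of the GFF on well-separated cells'' you invoke for the lower bound is not needed: the entire point of putting the expectation inside the $\limsup$ in the definition of $q_{M,E}$ is that only first moments and linearity of expectation are used, which is also why the a priori bound $q_{M,E}<1$ causes no trouble once the per-cell estimates are in place.
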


As the usual KPZ relation is given by $d_M = (2+\gamma^2/2)q_{M,E} - \gamma^2q_{M,E}^2/2$, this in particular means that for $0 < \kappa < 8$ the KPZ relation is not satisfied for the expected Minkowski dimension of the $SLE_\kappa$ in forward coupling with the GFF. Notice that in the limits $\kappa \downarrow 0, \kappa \uparrow 8$ we regain the KPZ relation. Using the fact that the quantum Hausdorff dimension is dominated by the expected quantum Minkowski dimension, we also deduce the following corollary 

\begin{corollary}
Consider the Liouville measure with $\gamma < 2$ in the unit disc and let $0 < \kappa < 8$. Then almost surely the quantum Hausdorff dimension for the flow lines SLE$_\kappa$ is below the dimension predicted by KPZ relation and hence the KPZ relation is not satisfied in the almost sure Hausdorff version. 
\end{corollary}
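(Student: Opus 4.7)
The plan is to combine Theorem 1.2 with a purely algebraic comparison of two implicit dimension equations and the domination of the almost sure Hausdorff dimension by the expected Minkowski dimension proved in Section 3.

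First I would introduce, on $[0,1]$,
\[
f(q) = (2+\gamma^2/2)\,q - \tfrac{1}{2}\gamma^2 q^2, \qquad g(q) = (2+\gamma^2/2)\,q - \tfrac{1}{2}\gamma^2(1-\kappa/4)^2\, q^2.
\]
The KPZ-predicted quantum dimension $q_{KPZ}$ is defined by $f(q_{KPZ}) = d_M$, while Theorem 1.2 gives $g(q_{M,E}) = d_M$ with $q_{M,E} < 1$. For $\gamma < 2$ the vertex of $f$ lies at $(2+\gamma^2/2)/\gamma^2 > 1$, so $f$ is strictly increasing on $[0,1]$; since $(1-\kappa/4)^2 < 1$ exactly when $0 < \kappa < 8$, the vertex of $g$ sits even farther to the right and $g$ is strictly increasing on $[0,1]$ as well. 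The same inequality gives $g(q) > f(q)$ for every $q \in (0,1]$, so $g(q_{M,E}) = f(q_{KPZ}) < g(q_{KPZ})$, and monotonicity of $g$ forces the deterministic strict inequality $q_{M,E} < q_{KPZ}$.

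Second, I would invoke the domination established in Section 3: the almost sure quantum Hausdorff dimension $q_H$ of a (possibly random) subset of the disc is bounded above by its expected quantum Minkowski dimension. Applied to the SLE$_\kappa$ flow line this gives $q_H \leq q_{M,E}$ almost surely, and chaining with the previous inequality yields $q_H < q_{KPZ}$ almost surely, which is exactly the statement of the corollary.

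The genuinely nontrivial input is the Hausdorff--Minkowski comparison, supplied by Section 3; the rest is one line of calculus. The only point where I would proceed carefully is checking that both $q_{M,E}$ and $q_{KPZ}$ lie on the increasing branches of their respective parabolas so that the monotonicity argument is valid, but this is guaranteed by the standing hypothesis $\gamma < 2$ together with $0 < \kappa < 8$, placing both vertices to the right of $1$.
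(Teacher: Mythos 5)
Your proof is correct and takes the same route as the paper: combine Theorem 1.2 (equivalently Theorem 6.1), giving $g(q_{M,E}) = d_M$, with Proposition 3.2, giving $q_H \leq q_{M,E}$ almost surely. The paper leaves the algebraic comparison of the two parabolas implicit, whereas you spell out the monotonicity check (both vertices lie past $1$ because $\gamma < 2$ and $(1-\kappa/4)^2 < 1$) that turns $g(q_{M,E}) = f(q_{KPZ}) < g(q_{KPZ})$ into $q_{M,E} < q_{KPZ}$; that is a worthwhile elaboration but not a different argument.
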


This incompatibility with the usual KPZ relation is illustrated by the following figure, where we fixed $\gamma = \sqrt{2}$, the dotted line represents the usual KPZ relation, and the solid line the actual quadratic relation satisfied by the expected quantum Minkowski dimension.
\begin{center}
    \includegraphics[width = 0.5\textwidth, keepaspectratio]{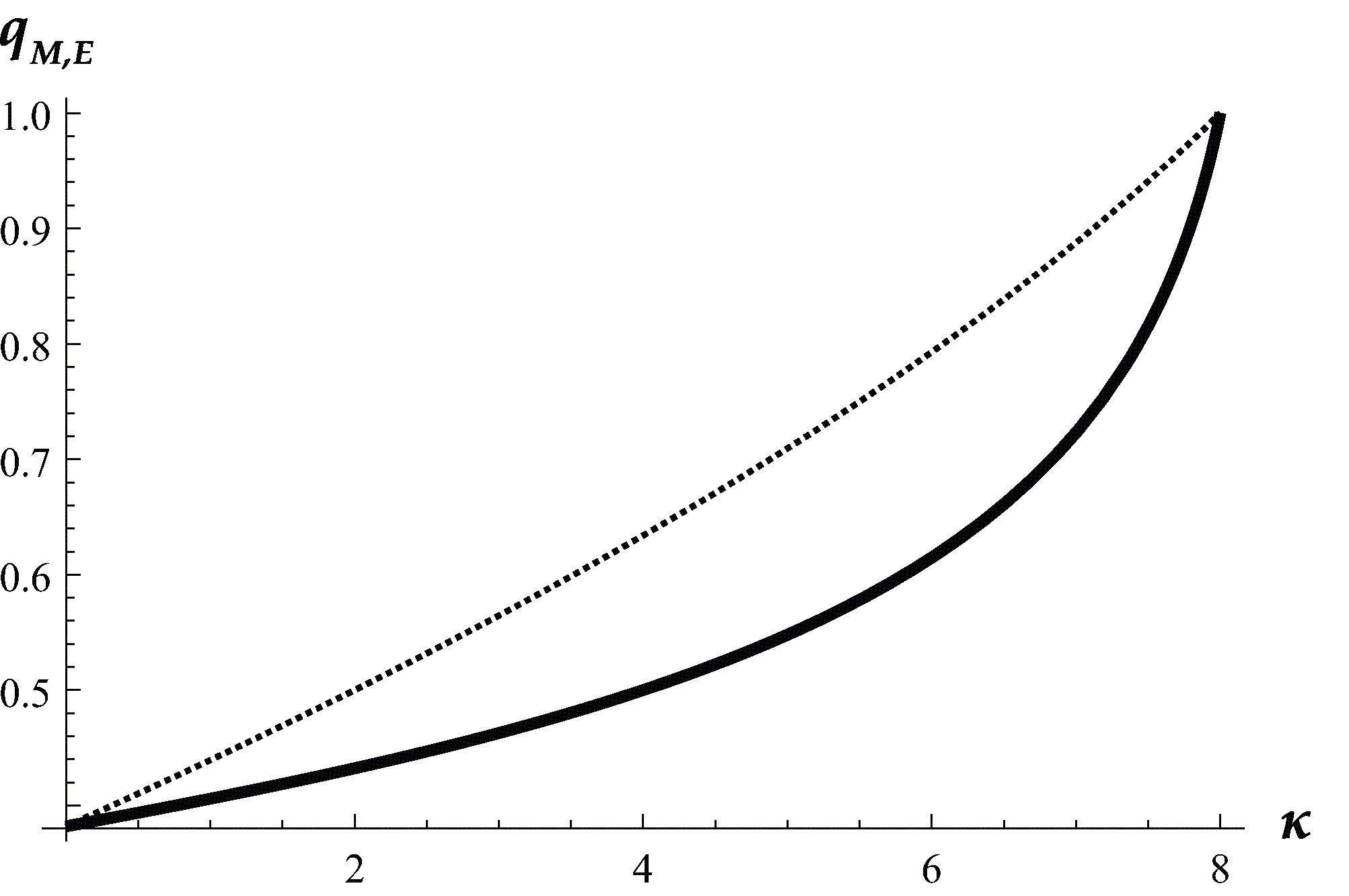}
    \end{center}

After having proved the main theorems of the paper, we return to a less rigorous level and finish the article with a section on some speculations and open questions.

\paragraph{Notations}\mbox{}\\
Multiplicative constants arriving in our calculations are of little importance and hence we use the big-$O$ and the $\lesssim$ notation. We write $f \lesssim g$ to mean that $f(x) \leq Mg(x)$ for all $x$ in the range of definition and some absolute constant $M$. If we need to make explicit the dependence of these constants, we write $\lesssim_{c,d}$ etc. Similarly, we use the notation $g \asymp f$, in cases where simultaneously $g \lesssim f$ and $f \lesssim g$. We hope this makes the calculations more readable.

\paragraph{Acknowledgements}\mbox{}\\
I would like to thank Christophe Garban for his direction towards this problem, his generous advice, encouragement and helpful comments on all possible versions of the manuscript. I am thankful to Wendelin Werner for introducing me to the playfield of the SLE and the GFF. I also thank Bruno Sevannec for helping to look for references in the literature of PDEs, Scott Sheffield for kindly sharing the images of the SLE and GFF coupling; Kaie Kubjas, Moses Tannenbaum and Michele Triestino for comments on the draft. I also acknowledge the partial support of the ANR grant MAC2 10-BLAN-0123.

\section{Preliminaries}

We will next introduce shortly the mathematical setting of our problem and discuss in a bit more length different possible formulations of the KPZ relation.

\subsection{GFF \& SLE \& their couplings}
\paragraph{GFF}\mbox{}\\
The Gaussian free field (GFF) is a model for random surfaces (though it is not mathematically a surface itself) and represents the Euclidean bosonic massless field in quantum field theories \cite{GFF}. It can also be conceptualised as a Brownian Motion with 2 time dimensions. Finally, due to its roughness, it is not defined point-wise and hence the underlying mathematical setting is that of random distributions.

We will give a possible definition of the zero boundary GFF in the upper half plane, but for a thorough treatment refer to \cite{GFF,DUB} and for a shorter introduction to \cite{GAR}. To define the GFF, denote by $\mathcal{H}_0(\HH)$ the set of smooth functions compactly supported inside $\HH$. Let $\mathcal{H}(\HH)$ be its closure with respect to the Dirichlet inner product and $\mathcal{H}(\HH)^{-1}$ the Hilbert space dual.

\begin{definition} [Zero boundary Gaussian free field]
The zero boundary GFF $h$ on the upper half plane $\HH$ can be defined as the zero-mean Gaussian process on the space $\mathcal{H}(\HH)^{-1}$ with the covariance kernel given by the Green's function $G_{\HH}(x,y)$ with zero boundary condition. In other words, to each $f \in \mathcal{H}(\HH)^{-1}$ the GFF attaches a Gaussian $h(f)$ such that $\EE(h(f)) = 0$ and $\EE(h(f)h(g)) = \frac{1}{2\pi}\int_{\HH \times \HH} f(x)g(y)G_{\HH}(x,y)dxdy$.
\end{definition}

The Gaussian free field is conformally invariant, if seen as a "height function" \cite{GFF}. Hence in fact this definition provides one for any simply-connected connected domain with Jordan boundary of the plane. In particular we will also use the GFF in the unit disc. Also, here we also work with the GFF with fixed boundary conditions - this just means we have a zero-boundary GFF plus an harmonic extension of the fixed boundary conditions. In all cases of this paper the harmonic extension can be seen to exist.

\paragraph{SLE}\mbox{}\\
Schramm-Loewner evolution (SLE) is a family of random curves, that were invented to describe the interfaces of models in statistical physics \cite{SCHRAMM}. They are conformally invariant processes that satisfy the domain Markov property. For a thorough introduction we refer to either \cite{WERNER} \cite{LAWLER}. 

Whereas one can talk of chordal, radial and whole-plane SLE-s, we here concentrate only on the chordal version. We define it in the upper half-plane $\HH$, but due to conformal invariance this of course gives the definition for any simply connected domain. SLE-s are defined via a family of conformal maps, and these conformal maps themselves are defined using the so called Loewner differential equation.

\begin{definition}[Loewner differential equation]
Let $\zeta(t)$ be a continuous real-valued function. Then for any $z \in \HH$ define $g_0(z) = z$ and
\[\partial_tg_t(z) = \frac{2}{g_t(z)-\zeta(t)}\]
defined up to $\tau(z) = \sup_{t\geq0}\min\abs{g_t(z)-\zeta(t)} > 0$.
\end{definition} 

If we write $K_t = \{z: \tau(z) \leq t \}$ then the equation above defines a family of conformal maps from the decreasing domains $H_t = \HH\backslash K_t$ back to the upper half plane. The family $K_t$ is called the Loewner chain. The randomness part enters by defining the driving function $\zeta(t)$ to be a multiple of the standard Brownian motion.

\begin{definition}[Chordal SLE]
Let $B_t$ denote a standard Brownian motion. Then the Loewner chain given by the driving function $\zeta(t) = \kappa B_t$ with $\kappa \geq 0$ is called an SLE$_\kappa$. 
\end{definition}

In this paper, we want to normalize the map such that the tip of the curve maps to zero. This cane be done by just setting 
\[f_t(z) = g_t(z) - \zeta(t)\]

It has been shown that the SLE chains are almost surely generated by a curve \cite{ROHDESCHRAMM}. We will be interested in the quantum fractal dimension of these curves, when coupled with the GFF. We use the known fact that the Hausdorff dimension of SLE$_\kappa$ curve for $\kappa \leq 8$ is $1+\kappa/8$, first proved in its entirety \cite{BEF}.

\paragraph{GFF \& SLE couplings}\mbox{}\\
The Gaussian free field and Schramm-Loewner evolutions are coupled in two beautiful ways \cite{DUB,ZIPPER}. One way is to see SLE curves as interfaces for glueing together two Liouville measures \cite{ZIPPER}. In this paper we work with the other way, which gives SLE curves a geometric meaning, when interpreting GFF as a random surface \cite{DUB,ZIPPER}. Set 
\[\lambda = \frac{\pi}{\sqrt{\kappa}}\]

Firstly, $SLE_4$ can be seen as, or maybe rather forced to be the zero level line of the GFF \cite{SS}:

\begin{theorem}[Zero level lines of the GFF]
\label{thmCL}
Let $\eta$ be a chordal SLE$_4$ curve in $\HH$ and $h$ the GFF in $\HH$ with boundary conditions $-\lambda$, $\lambda$ on the negative and positive real axis respectively. Then there is a coupling $(h, \eta)$ such that 
\begin{itemize}
\item the marginal of $h$ can be obtained by sampling the SLE up to some stopping time $T$, and then sampling an independent GFF in the slit domain with boundary conditions set to $-\lambda$ on negative real axis and to the left of the SLE, and $\lambda$ to the right of the SLE and on the positive real axis
\item $\eta$ is a measurable with respect to $h$
\end{itemize}

\end{theorem}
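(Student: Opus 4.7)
My plan is to build the coupling in the forward direction from the SLE: sample the SLE$_4$ curve $\eta$ with driving function $\zeta_t = 2 B_t$, and then independently sample a zero-boundary GFF $\hat h$ on the slit domain $H_T$; finally set $h = \hat h + u_T$, where $u_T$ is the harmonic extension in $H_T$ of the prescribed boundary values ($-\lambda$ on the left side of the slit together with the negative real axis, $+\lambda$ on the right side together with the positive real axis). The first bullet then reduces to showing that the resulting law of $h$ does not depend on the stopping time $T$, which via an It\^o computation on the Loewner flow boils down to the vanishing of a certain drift exactly when $\kappa = 4$. The second bullet, measurability of $\eta$ with respect to $h$, requires an additional and substantially harder uniqueness argument.

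For the distributional statement I would focus on $u_t(z)$. Since $f_t(z) := g_t(z) - \zeta_t$ maps the component of $H_t$ containing $z$ conformally onto $\HH$ with the tip sent to $0$, and the boundary data pushes forward to $\pm\lambda$ on the positive/negative real axis, the harmonic extension is explicitly
\[
  u_t(z) \;=\; \lambda \left(1 - \tfrac{2}{\pi}\arg f_t(z)\right).
\]
Applying It\^o to $\log f_t(z)$, and using $\partial_t g_t = 2/f_t$ together with $d\zeta_t = \sqrt{\kappa}\, dB_t$, one obtains
\[
  d\log f_t(z) \;=\; \frac{2 - \kappa/2}{f_t(z)^2}\, dt \;-\; \frac{d\zeta_t}{f_t(z)}.
\]
Taking imaginary parts, the drift of $u_t(z)$ is proportional to $(2-\kappa/2)\,\mathrm{Im}(1/f_t(z)^2)$, which vanishes precisely at $\kappa = 4$. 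So $u_t(z)$ is a continuous local martingale. To upgrade this to a statement about the whole field, I would test against smooth compactly supported $\rho, \rho'$ and check that the conditional covariance $\EE[h(\rho) h(\rho') \mid \mathcal{F}_t^\eta]$ equals $\frac{1}{2\pi}\int G_{H_t}(z,w) \rho(z)\rho'(w)\, dz\, dw + \int u_t(z) u_t(w) \rho(z)\rho'(w)\, dz\, dw$; the $u_t u_t$ part is handled by the martingale computation above, and the $G_{H_t}$ part by the standard fact that $G_{H_t}(z,w)$ equals $G_\HH(z,w)$ minus the harmonic extension in $H_t$ of $G_\HH(\cdot,w)$ from the slit. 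These two pieces together give the invariance of the total law, which is the first bullet.

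The main obstacle is the second bullet. It does not follow from the distributional coupling and is the content of the Schramm--Sheffield existence/uniqueness theorem for level lines. My approach would be to prove pathwise uniqueness: given two curves $\eta, \eta'$ coupled with the \emph{same} $h$ as in the first bullet, I would consider a common Loewner-type parametrization of the pair, derive a system of SDEs for their relative conformal position driven by $\zeta_t, \zeta'_t$, and exploit the fact that both drivers are measurable functions of the common continuous martingale $u_t(z)$ evaluated at suitable test points to conclude $\zeta_t = \zeta'_t$ almost surely. Once uniqueness in the coupling is established, the curve $\eta$ must coincide with any canonical functional of $h$ (for instance, the limit of discrete level lines of mollifications of $h$), which gives measurability. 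This uniqueness step is where the real technical difficulty lies, whereas the It\^o-based computation above is essentially a direct calculation singling out $\kappa = 4$.
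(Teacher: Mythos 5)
The paper does not prove this statement at all: it is quoted as a known coupling theorem and attributed to Schramm--Sheffield \cite{SS} (see also \cite{ZIPPER,DUB}), so there is no internal proof to compare against; your proposal has to be judged as a reconstruction of that external argument.

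Your It\^o computation is the correct and standard first half: $d\log f_t(z) = \frac{2-\kappa/2}{f_t(z)^2}\,dt - \frac{d\zeta_t}{f_t(z)}$, so the harmonic extension $u_t(z)=\lambda\bigl(1-\tfrac{2}{\pi}\arg f_t(z)\bigr)$ is drift-free exactly at $\kappa=4$. But as written the argument has a genuine gap: drift-freeness of $u_t$ singles out $\kappa=4$ for \emph{every} value of $\lambda$, whereas the first bullet is false unless $\lambda=\pi/\sqrt{\kappa}=\pi/2$. What pins down $\lambda$ is the second-moment identity you gloss over: one needs $d\langle u_\cdot(z),u_\cdot(w)\rangle_t = -\,dG_{H_t}(z,w)$, i.e.\ a Hadamard-type computation for the Loewner flow giving $dG_{H_t}(z,w)\propto -\operatorname{Im}(1/f_t(z))\operatorname{Im}(1/f_t(w))\,dt$, matched against $d\langle u(z),u(w)\rangle_t = \tfrac{4\lambda^2\kappa}{\pi^2}\operatorname{Im}(1/f_t(z))\operatorname{Im}(1/f_t(w))\,dt$; only then is $\exp\bigl(iu_t(\rho)-\tfrac12\mathrm{Var}_{H_t}(\rho)\bigr)$ a martingale and the two-step sampling law independent of $T$. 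Your appeal to ``the standard fact that $G_{H_t}$ equals $G_\HH$ minus a harmonic extension'' does not substitute for this: that fact is merely the characterization of the subdomain Green's function and carries no information about the quadratic variation of $u_t$, so your sketch never uses, and could not detect, the specific value $\lambda=\pi/2$.

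The second bullet is also not repaired by your sketch. The driving function $\zeta_t$ is a functional of the curve, and $u_t(z)$ is likewise curve-measurable, so arguing that ``both drivers are measurable functions of the common martingale $u_t(z)$'' is circular --- the entire difficulty is to show that the curve (equivalently $u_t$) is determined by the field $h$, which is the hard half of \cite{SS} (proved there via discrete contour-line approximation, and by flow-line interaction/uniqueness arguments in \cite{DUB,IM}). You correctly flag this as the substantial step, but the mechanism you propose would not deliver it; for the purposes of this paper the legitimate move is simply to cite it, as the author does.
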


\begin{remark}
It is shown in \cite{SS} that the GFF of the slit domain is well-defined as a distribution on the whole of $\HH$. This will be important for us. Also notice that the harmonic correction term introduced by boundary conditions is not well-defined on SLE. However, it can be set to zero - see the discussion following the statement on theorem 1.1. in \cite{ZIPPER}. Finally, notice that this harmonic correction coming from the boundary conditions is uniformly bounded.
\end{remark}

Secondly, $SLE_\kappa$ for $\kappa > 0$ can be seen as flow lines of the GFF \cite{ZIPPER,DUB,IM}. Whereas intuitively level lines are clear, flow lines are a bit harder to interpret. Nice pictures with nice explanations can be found in \cite{IM}, but also figures 1 and 2 of the paper should provide some insight. In short and without rigour, for flow lines at any point, the angular derivative is given by a multiple of the field height. 

\begin{theorem}[Flow lines of the GFF]
\label{thmC}
For $0 \leq \kappa < 4$, let $\eta_\kappa$ be a chordal SLE$_\kappa$ curve in $\HH$ and $h$ the GFF in $\HH$ with boundary conditions $-\lambda$, $\lambda$ on the negative and positive real axis respectively. Then there is a coupling $(h, \eta_\kappa)$ such that 
\begin{itemize}
\item the marginal of $h$ can be obtained by 
\begin{itemize}
\item sampling the SLE $\eta_\kappa$ up to some time $T$
\item then sampling an independent GFF in the slit domain with boundary conditions as above: $-\lambda$ on negative real axis and to the left of the SLE, and $\lambda$ to the right of the SLE and on the positive real axis \item and finally, subtracting $\chi \arg f'_T$ where $\chi = 2/\sqrt{\kappa} - \sqrt{\kappa}/2$ and $f_T$ is the normalized SLE map
\end{itemize}
\item $\eta$ is a measurable with respect to $h$
\end{itemize}
\end{theorem}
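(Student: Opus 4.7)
The plan is to mirror the argument used for zero level lines (Theorem \ref{thmCL}) with the extra angle correction $-\chi\arg f'_t$ playing the role of a harmonic perturbation. I would first write down a candidate conditional mean $m_t(z)$; show by Ito's formula that $m_t(z)$ is a continuous local martingale, which pins down the constants $\lambda$ and $\chi$; verify that the quadratic covariations match the Hadamard derivative of the slit-domain Green's function; invoke the abstract coupling lemma of Schramm--Sheffield to produce the coupling; and finally appeal to a Dub\'edat-type duplication argument to upgrade the coupling to measurability.

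For $z\in H_t = \HH\setminus\eta_\kappa[0,t]$, set
\[
m_t(z) \;=\; \frac{2\lambda}{\pi}\arg f_t(z) - \lambda \;-\; \chi\,\arg f'_t(z),
\]
with $\arg$ chosen in $(0,\pi)$. The first two terms form the bounded harmonic extension to $H_t$ of the prescribed $\pm\lambda$ boundary data, while the last term is the harmonic correction claimed in the theorem. Using the Loewner equation together with its derivative counterpart $\partial_t f'_t(z) = -2f'_t(z)/f_t(z)^2$, an Ito computation gives a decomposition of $m_t(z)$ whose martingale part is proportional to $\Im(1/f_t(z))\,dB_t$ and whose drift is a linear combination of $\Im(1/f_t(z)^2)$ coming from the Ito correction on $\arg f_t$ and from the deterministic flow on $\arg f'_t$. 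Balancing these two contributions yields a system of two equations that forces $\lambda = \pi/\sqrt\kappa$ and $\chi = 2/\sqrt\kappa - \sqrt\kappa/2$, which are exactly the values stated in the theorem. Hence $m_t(z)$ is a continuous local martingale in $t$ for every fixed $z$.

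A parallel computation shows $d\langle m_\cdot(z),m_\cdot(w)\rangle_t = -\partial_t G_{H_t}(z,w)\,dt$, the Hadamard formula for the Green's function along the Loewner flow. Together with the martingale property of $m_t(z)$, this is exactly the hypothesis of the abstract coupling lemma used in \cite{SS,DUB,ZIPPER}: it produces a coupling $(h,\eta_\kappa)$ in which, conditionally on $\eta_\kappa[0,T]$ for any stopping time $T$ at which $f_T$ is well defined, the field $h$ restricted to $H_T$ is a GFF on $H_T$ with mean $m_T$. This is precisely the first bullet of the theorem.

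The measurability of $\eta_\kappa$ with respect to $h$ is the deep part of the theorem and is where I expect the main obstacle. The standard route is Dub\'edat's duplication argument: couple two copies $\eta_\kappa$ and $\eta'_\kappa$ to the same field $h$ by the construction of the previous paragraphs, show via a symmetry and height-gap analysis that $(h,\eta_\kappa,\eta'_\kappa)$ has the same joint law as $(h,\eta_\kappa,\eta_\kappa)$, and conclude that $\eta_\kappa = \eta'_\kappa$ almost surely. In the flow-line setting this symmetry step is delicate because one must rule out that two flow lines starting at the same point with the same angle eventually separate; it is handled in \cite{DUB} via an explicit height-gap type analysis and more systematically in \cite{IM} via the imaginary geometry machinery. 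Rather than reprove it here, I would simply quote these sources once the first bullet is established.
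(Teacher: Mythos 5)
The paper does not prove Theorem \ref{thmC}: it is quoted as a background result from \cite{ZIPPER,DUB,IM}, with only a remark on conventions following the statement, so there is no internal proof to compare against. Your outline reproduces the standard argument in those references --- write down the candidate conditional mean $m_t(z)$, check the local-martingale property and the Hadamard quadratic-covariation identity, invoke the abstract coupling lemma of Schramm--Sheffield, and use Dub\'edat's duplication argument for measurability --- so the strategy is the right one and matches the cited sources. Two minor points of precision: (i) the branch of $\arg f'_t$ must be fixed to be continuous in the slit domain and to tend to $0$ at infinity (as the remark after the theorem in the paper states), not to lie in $(0,\pi)$, since the winding is unbounded near the tip of the curve; (ii) the drift-vanishing computation by itself yields only a single linear relation between $\lambda$ and $\chi$, and the second equation that pins down $\lambda = \pi/\sqrt{\kappa}$ comes from matching the quadratic covariation with $-\partial_t G_{H_t}(z,w)$, which you do state in the following paragraph but should not be attributed to \emph{balancing the drift contributions} alone.
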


Again, there are some remarks to be made. First, this coupling reduces to the level line coupling for $\kappa = 4$ as then $\chi = 0$. As above, GFF in the slit domain can be extended to the whole plane, and the harmonic correction term (this time possibly unbounded!) can be still set to zero on the curve. Second, $\arg f'_T(z) = \operatorname{Im} \log f'_T(z)$ measures the winding of the SLE curve with respect to the point $z$. We require the argument to be continuous in the slit domain and tend to $0$ at infinity. The winding is discussed in section 5 of the paper, but we also refer to \cite{IM}. 

Also, in fact $\kappa < 4$ is no real restriction, everything can also be stated for $\kappa > 4$. One needs to just take extra care as the SLE curve is no longer simple: first, the winding for any point needs to be calculated just before the point gets separated from infinity by the curve i.e. as a limit $\lim_{t\uparrow T'} f_t(z)$ with $T' = T \wedge \tau(z)$, where $\tau(z)$ is the first time $z$ is separated from the infinity by the SLE curve. Second, a separate and independent GFF needs to be defined in each isolated domain (they all extend similarly to the whole of $\HH$) and for the boundary conditions one needs to take into account in which direction the loops were closed. For details, see \cite{ZIPPER} or \cite{IM}. 

Finally, we remark that more generally SLE$_{\kappa,\underline{\rho}}$ processes satisfying certain conditions give flow lines of the GFF, see \cite{IM} for details.

\subsection{Liouville measure}

Liouville measure should be the right model for a random measure underlying the study of statistical physics models in their "quantum gravity" form. It is one step short of the actual aim - the Liouville metric.

Mathematically, the Liouville measure ought to be the exponential of the Gaussian free field. However, as GFF is formally a distribution, one needs to define the Liouville measure using some kind of regularization process. There are many ways of achieving this, the roots going back to the beautiful work of Kahane \cite{KAHANE} on Gaussian Multiplicative Chaos. Different ways of defining the Liouville measure and their equivalence are discussed in greater detail in \cite{GMC}.

In this article we use the circle-averaging regularization as used in Duplantier \& Sheffield \cite{DS}. As explained below, this suits our needs better.

\paragraph{Liouville measure}\mbox{}\\
In \cite{DS} the following process is used to define the Liouville measure in any sufficiently nice domain $D$:

\begin{itemize}
\item First, regularize the field by taking circle averages around each point, i.e. set 
\[h_\delta(z) = h(\rho^z_{\delta_n})\]
where by $\rho^z_{\delta_n}(z) \in \HH^{-1}$ we denote the distribution giving unit mass to the circle of radius $\delta_n$ around the point $z$.
\item Now let $0 < \gamma < 2$ and define the $\delta-$approximate Liouville measures as
\[d\mu_\delta(z) = \delta^{\gamma^2/2}e^{\gamma h_\delta(z)}dz\]
\end{itemize}

\begin{remark}
The regularized GFF corresponds to a Gaussian field with the covariance kernel given by
\[G_{\delta}(x,y) = \log \frac{1}{\delta \vee \abs{x-y}} + \tilde{G}_{\delta}(x,y)\]
Here $\tilde{G}_{\delta}(x,y)$ is the harmonic extension to the domain of the the function $-\log \frac{1}{\delta \vee \abs{x-y}}$ on boundary. See \cite{DS} for details.
\end{remark}

Then the following theorem can be then taken as definition of the Liouville measure \cite{DS}: 

\begin{theorem}\label{Liouville}
Let $D$ be a domain. For $0 \leq \gamma < 2$, along powers of two in the interior of $D$, then almost surely $\delta-$approximate Liouville measures weakly converge to a non-degenerate random measure $\mu_\gamma$, called the Liouville measure.  This measure is measurable w.r.t zero-boundary GFF $h$.
\end{theorem}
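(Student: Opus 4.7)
The plan is to establish existence of the limiting measure through a martingale argument following the Duplantier--Sheffield construction. I would fix a Borel set $A$ compactly contained in $D$ and study
\[
Y_n := \mu_{2^{-n}}(A) = \int_A 2^{-n\gamma^2/2}\, e^{\gamma h_{2^{-n}}(z)}\,dz.
\]
The aim is to show that $(Y_n)$ is a non-negative martingale with respect to the filtration $\mathcal{F}_n = \sigma\bigl(h_{2^{-k}}(z):\, z\in D,\, k\leq n\bigr)$, then pass to the almost sure limit pointwise and upgrade these limits to a genuine random measure.

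The martingale property rests on the Brownian motion structure of circle averages: for fixed $z$ at bounded distance from $\partial D$, the process $t\mapsto h_{e^{-t}}(z)$ is a standard Brownian motion in $t$ up to the bounded harmonic correction $\tilde G_\delta(z,z)$, as follows directly from the covariance formula $G_\delta(x,y) = \log(\delta\vee\abs{x-y})^{-1}+\tilde G_\delta(x,y)$ combined with the independence of the field restricted to disjoint concentric annuli. Hence conditional on $\mathcal{F}_n$, the increment $h_{2^{-(n+1)}}(z) - h_{2^{-n}}(z)$ is Gaussian with variance $\log 2$, so
\[
\EE\bigl[e^{\gamma h_{2^{-(n+1)}}(z)}\,\big|\,\mathcal{F}_n\bigr] = e^{\gamma h_{2^{-n}}(z)}\cdot 2^{\gamma^2/2},
\]
and the normalization $\delta^{\gamma^2/2}$ in the definition of $\mu_\delta$ is chosen precisely to cancel this factor, yielding $\EE[Y_{n+1}\mid\mathcal{F}_n] = Y_n$ by Fubini. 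Doob's theorem then provides an almost sure limit $M(A) \in [0,\infty)$.

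The crux, and the main obstacle, is non-degeneracy of $M(A)$. To guarantee $\EE[M(A)] = \EE[Y_0] > 0$, uniform integrability of $(Y_n)$ is needed. Following Kahane's theory of Gaussian multiplicative chaos, I would establish an $L^p$ moment bound $\sup_n \EE[Y_n^p] < \infty$ for some $p \in \bigl(1, 4/\gamma^2\bigr)$; such a $p$ exists exactly because $\gamma < 2$. The bound is obtained by decomposing the field into independent scale increments and applying Kahane's convexity inequality to compare $(Y_n)$ with a reference multiplicative cascade whose $L^p$ moments can be computed by iteration in $n$, reducing everything to the subcriticality of the convex function $p\mapsto \gamma^2p(p-1)/2$. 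At $\gamma = 2$ the admissible range of $p$ collapses, the martingale degenerates to zero, and a critical renormalization would be required.

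Finally, to upgrade from fixed sets to a random measure, I would apply the above to a countable collection of well-chosen test sets (e.g.\ dyadic subsquares compactly contained in $D$), obtaining simultaneous convergence on a single event of full probability, and deduce weak convergence of $\mu_{2^{-n}}$ against continuous compactly supported test functions using dominated convergence together with the $L^p$ bound. Finite additivity and tightness yield the extension to a Borel measure $\mu_\gamma$ via Carath\'eodory. Measurability of $\mu_\gamma$ with respect to the zero-boundary GFF is automatic, since each $\mu_{2^{-n}}$ is a deterministic functional of the circle averages $h_{2^{-n}}$ and the limit is taken along a fixed deterministic subsequence of radii.
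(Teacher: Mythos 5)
Your overall architecture --- martingale convergence, uniform integrability via an $L^p$ bound with critical exponent $4/\gamma^2$, then upgrading from test sets to a measure --- is the standard one, but the load-bearing step fails: with the filtration $\mathcal{F}_n = \sigma\bigl(h_{2^{-k}}(w) : w \in D,\ k\leq n\bigr)$, the sequence $(Y_n)$ is \emph{not} a martingale. The conditional centering you invoke holds when one conditions on $\mathcal{G}_n^z := \sigma\bigl(h|_{D\setminus B_{2^{-n}}(z)}\bigr)$: by the Markov property of the GFF, the field inside $B_{2^{-n}}(z)$ is a harmonic extension plus an independent zero-boundary GFF, and $h_{2^{-(n+1)}}(z) - h_{2^{-n}}(z)$ is a circle average of only the independent piece. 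But $\mathcal{F}_n \not\subset \mathcal{G}_n^z$: for any $w$ with $0 < |w-z| < 2\cdot 2^{-n}$, the circle $\partial B_{2^{-n}}(w)$ dips into $B_{2^{-n}}(z)$, so the $\mathcal{F}_n$-measurable variable $h_{2^{-n}}(w)$ genuinely depends on the field inside $B_{2^{-n}}(z)$. Explicitly, writing $G_D(x,y) = \log\frac{1}{|x-y|} + \tilde G_D(x,y)$ and using the mean-value identity $\int \log\frac{1}{|x-y|}\,\rho^z_{\delta}(dx) = \log\frac{1}{\max(|z-y|,\delta)}$, one gets
\[
\mathrm{Cov}\bigl(h_{2^{-(n+1)}}(z) - h_{2^{-n}}(z),\; h_{2^{-n}}(w)\bigr)
\;\asymp\; \int \log\frac{\max(|z-y|,\,2^{-n})}{\max(|z-y|,\,2^{-(n+1)})}\;\rho^w_{2^{-n}}(dy)\;>\;0
\]
whenever $\partial B_{2^{-n}}(w)$ meets $B_{2^{-n}}(z)$; the harmonic correction $\tilde G_D(\cdot,y)$ is $\delta$-independent in circle averages at these scales and drops out of the difference. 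Since the increment is therefore not $L^2$-orthogonal to $\mathcal{F}_n$, in this Gaussian setting its conditional mean given $\mathcal{F}_n$ is nonzero, and $\EE[Y_{n+1}\mid\mathcal{F}_n]\neq Y_n$.

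This is not a notational slip; the whole mechanism requires a filtration along which the scale increments are genuinely independent, or at least conditionally centered. Standard fixes: (i) use the partial-sum filtration $\sigma(\alpha_1,\dots,\alpha_n)$ for an expansion $h = \sum_j \alpha_j f_j$ in a fixed orthonormal basis of $\mathcal{H}(D)$, which does give a clean martingale and is what Duplantier and Sheffield exploit in one step of their argument; (ii) use a Kahane or white-noise decomposition of the covariance kernel into independent nonnegative scale slices; or (iii) forgo martingales for the circle-average approximants, as Duplantier and Sheffield actually do, proving $\mu_{2^{-n}}(A)$ is Cauchy in $L^2$ when $\gamma < \sqrt2$ and extending to $\gamma < 2$ by a Girsanov (rooted-measure) tilting, with Borel--Cantelli giving almost sure convergence along powers of two. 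If you take routes (i) or (ii), you still owe a separate identification of that limit with $\lim_n \mu_{2^{-n}}$ --- the theorem you are proving is specifically about the circle-average regularization. The remaining ingredients of your sketch (the $L^p$ bound with $p \in (1, 4/\gamma^2)$, the countable base of test sets, and the measurability remark) are sound.
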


\begin{remark}
Often we denote $\mu = e^{\gamma h}$, as $\gamma$ can be taken to be a fixed parameter $0 < \gamma < 2$ throughout the paper. 
\end{remark}

The renormalization term $\delta^{\gamma^2/2}$ is there to compensate for the growing variance and in this case uniform over the domain. As a result the field is lower near the boundary as the Green's function giving the covariance structure is lower near the boundary. This is illustrated for example by the fact that $\EE \mu_\delta(z) = \CR(z,D)^{\gamma^2/2}$, where $\CR(z,D)$ denotes the conformal radius at the point $z$ (which is though rigorously at distance at least $\delta$ from the boundary).

In the Gaussian Multiplicative Chaos approach one usually renormalizes by setting 
\[dM_\delta(z) = e^{\gamma h_\delta(z)-\gamma^2/2\EE(h_\delta(z))}dz\]
i.e. directly by the variance. This setting is more comfortable for a much wider selection of regularization procedures, as you do not need to know explicitly the variance on each regularization step \cite{RRV}. For us, this type renormalization, however, poses a little problem.

Namely, we we want to couple the Liouville measure with the SLE in a similar way to the coupling of the GFF and the SLE. Recall that in order to sample a GFF in this coupling, we start by first sampling an SLE, then choosing an independent GFF in the slit domain and adding some harmonic correction terms. For the Liouville measure we would hence like to also start from sampling the SLE, then defining our Gaussian field in the slit domain and finally construct the Liouville measure such that its distribution is that of the whole domain. 

Now, for the renormalization in the definition of the Liouville measure above, the regularized field $h_\delta$ corresponds to taking circle averages around each point, or in other words evaluating the distribution $h$ at unit measures on $delta-$circles. It does not matter whether we constructed the underlying by sampling first the SLE, then the GFF with harmonic correction terms or otherwise. On the other hand, with the variance-based renormalization, under the conditioned measure the renormalization term does not change and in some sense the zero level line is just not seen.

Thus in this article we use the Liouville measure regularized as in the definition above. However, we shall however find it useful to change the renormalization for some calculations, in order to use some machinery developed in that context \cite{KAHANE}\cite{RRV}.

\subsubsection{The 2D KPZ relations for the Liouville measure}

We now introduce two canonical versions for the KPZ relation in 2D quantum gravity, and propose yet another one. Then we shortly compare all three. The difference is only in the nature of the fractal dimension used: either using a box-counting, Hausdorff or Minkowski version of the dimension. Throughout we always (more or less silently) assume that we are dealing with sets such that the corresponding fractal dimensions exist. Whereas here all the dimensions are measure-based, we also remark that in \cite{BS} a 1D metric version of KPZ relation was proved in the context of dyadic multiplicative cascades. 

\paragraph{Expected box-counting version}\mbox{}\\
The first rigorous version of the KPZ relation was given in the work of Duplantier-Sheffield \cite{DS}, to which an interested reader can find a well-readable introduction in \cite{GAR}. Here the fractal dimensions for a fixed set $A$ on the Euclidean and on the quantum side are defined as follows (assuming they exist in the first place):
\begin{itemize}
\item Euclidean side: 
\[x(A) = \lim_{r \downarrow 0}\frac{\log \PP(B_r(z) \cap A \neq \emptyset)}{\log r}\] where we sample according the the uniform measure of the domain.
\item Quantum side:  \[\Delta(A) = \lim_{r \downarrow 0}\frac{\log \EE \mu_h(B_r^q(z) \cap A \neq \emptyset)}{\log r}\]
Here the quantum ball $B^q_r(z)$ of radius $r$ is defined as the largest Euclidean ball around $z$ for which the Liouville measure is not larger than $r$. 
\end{itemize}
With these notions the KPZ relation holds:
\begin{theorem}[Duplantier \& Sheffield]
Let $A$ be a deterministic (or field-independent) compact subset in the interior of some domain such that its Euclidean scaling exponent $x(A)$ exists. Let $\mu_\gamma$ be the Liouville measure on this domain with $0 \leq \gamma < 2$. Then we have that:
\begin{itemize}
\item the quantum scaling exponent $0 \leq \Delta(A) \leq 1$ exists and 
\item satisfies the so called KPZ formula:
\[x = (2-\gamma^2/2)\Delta + \gamma^2\Delta^2/2\]
\end{itemize}
\end{theorem}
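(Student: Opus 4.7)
The plan is to reparametrize scales by the radius of circle averages around a fixed interior point $z$ and to exploit the fact that the circle-average process $B_t := h_{e^{-t}}(z) - h_1(z)$ is, up to boundary corrections controlled by $\CR(z,D)$, a standard Brownian motion in $t$; this follows from the covariance structure of the zero boundary GFF. Using the explicit form of the Liouville measure, one obtains the leading-order asymptotic
\[
\mu(B_\delta(z)) \;\asymp\; \delta^{2+\gamma^2/2}\, e^{\gamma B_t}, \qquad t=-\log\delta,
\]
so that writing $s = -\log r$, the quantum ball $B^q_r(z)$ is (up to bounded distortion) the Euclidean ball of radius $e^{-T_s}$, where
\[
T_s \;:=\; \inf\bigl\{ t\ge 0 : (2+\gamma^2/2)\, t - \gamma B_t = s \bigr\}
\]
is the first passage time of a drifted Brownian motion.

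Next I would reduce both sides of the KPZ relation to expectations about $T_s$. The Euclidean definition directly gives $\PP(B_\delta(z)\cap A\neq\emptyset) = e^{-tx+o(t)}$ for $t=-\log \delta$ and $z$ sampled uniformly. For the quantum side one needs a change of measure: following Kahane and Duplantier--Sheffield, factor
\[
d\mu(z) \;=\; \CR(z,D)^{\gamma^2/2}\, M^z_\infty\, dz, \qquad M^z_t := e^{\gamma B_t - \gamma^2 t/2},
\]
where $M^z_t$ is a unit-mean martingale. Cameron--Martin then implies that under the rooted measure $M^z_\infty\,dz$ the circle average becomes $B_t + \gamma t$, so the defining equation for $T_s$ transforms into $(2-\gamma^2/2)\, t - \gamma B_t = s$. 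Writing $\widetilde{\EE}$ for expectation under the rooted measure and conditioning on the circle-average process near $z$ (which is approximately independent of the rest of the field that determines the geometry of $A$),
\[
\EE\!\int_D \II_{B^q_r(z)\cap A\neq\emptyset}\, d\mu(z) \;\asymp\; \int_D \CR(z,D)^{\gamma^2/2}\; \widetilde{\EE}\bigl[e^{-x T_s}\bigr]\, dz.
\]

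Finally, I would invoke the classical Laplace transform of the inverse Gaussian distribution: the first passage time of the drifted Brownian motion $(2-\gamma^2/2)\,t - \gamma B_t$ to level $s$ satisfies $\widetilde{\EE}[e^{-xT_s}] = e^{-s\, g(x)}$ with $g(x) = \bigl(\sqrt{(2-\gamma^2/2)^2 + 2\gamma^2 x} - (2-\gamma^2/2)\bigr)/\gamma^2$. Setting $\Delta := g(x)$ and squaring immediately gives the quadratic
\[
x = (2-\gamma^2/2)\Delta + \gamma^2\Delta^2/2,
\]
and the bounds $0\le \Delta \le 1$ are forced by the range $x \in [0,2]$ of planar scaling exponents together with the monotonicity of $g$.

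The main technical difficulty will be making the heuristic $\mu(B_\delta(z))\asymp \delta^{2+\gamma^2/2}\,e^{\gamma B_t}$ into a statement precise enough to match exponentials on both sides of the formula: this requires tail bounds and Kahane-type convexity inequalities to replace the field in $B_\delta(z)$ by its circle average at $z$ without degrading the exponent, a uniform control in $z$ of the boundary corrections and of the martingale convergence, and a concentration estimate showing that $T_s$ is tightly localized around $s/(2-\gamma^2/2)$ under the rooted measure, so that the event $\{B_{e^{-T_s}}(z)\cap A\neq \emptyset\}$ really decouples from $T_s$ in the limit. These are exactly the ingredients assembled in Duplantier--Sheffield, and the rest of the argument is essentially a clean Brownian computation.
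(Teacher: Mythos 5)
The paper does not contain a proof of this statement: it is quoted as a known result and attributed to Duplantier \& Sheffield \cite{DS}, so there is nothing internal to compare against. Your sketch is, however, a faithful reconstruction of the Duplantier--Sheffield argument: the circle-average process as a Brownian motion in $t=-\log\delta$, the rooted (Peyri\`ere) measure, the Cameron--Martin shift $B_t\mapsto B_t+\gamma t$ turning the drift $2+\gamma^2/2$ into $2-\gamma^2/2$, and the inverse-Gaussian Laplace transform of the first-passage time $T_s$. The algebra closes correctly (squaring $\gamma^2\Delta + (2-\gamma^2/2) = \sqrt{(2-\gamma^2/2)^2+2\gamma^2 x}$ gives the quadratic), and the boundedness $0\le\Delta\le 1$ follows since $g(0)=0$, $g(2)=1$, and $g$ is increasing.

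One presentational slip worth fixing: the display
\[
\EE\!\int_D \II_{B^q_r(z)\cap A\neq\emptyset}\, d\mu(z) \;\asymp\; \int_D \CR(z,D)^{\gamma^2/2}\,\widetilde{\EE}\bigl[e^{-x T_s}\bigr]\, dz
\]
has the order of operations reversed; as written it suggests $e^{-xT_s}$ arises pointwise in $z$. What actually happens is that, after the Fubini swap, you first integrate the indicator over $z$ (for fixed exit radius $e^{-t}$) to produce the deterministic factor $\mathrm{Leb}(A_{e^{-t}})\asymp e^{-xt}$, and \emph{then} integrate against the law of $T_s$ under the rooted measure, which is what yields $\widetilde{\EE}[e^{-xT_s}]$. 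The decoupling you invoke is automatic here because $A$ is deterministic (so $\II_{B_{e^{-T_s}}(z)\cap A\neq\emptyset}$ is a deterministic function of $T_s$ for each $z$); the ``approximate independence'' remark is only needed in the harder field-dependent regime, which is precisely where the theorem fails to extend and where the rest of this paper takes over.
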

\paragraph{Almost sure Hausdorff version}\mbox{}\\
Soon thereafter, Rhodes \& Vargas \cite{RV} published a version using slightly different notion for the fractal dimension. The proof of the respective KPZ relation can be made quite short \cite{GMC2}. As a basis for their definition of the quantum dimension, they use a measure-based Hausdorff dimension.
\begin{itemize}
\item On the Euclidean side we use the usual Hausdorff dimension. I.e. define the Hausdorff content 
\[H_\delta(A,r) = \inf \{\sum_i r_i^\delta: A \subset \cup_1^k B_i(r), r_i \leq r \}\]
Then the Hausdorff dimension is defined as
\[d_H(A) = \inf_\delta \{\lim_{r \downarrow 0} H_\delta(A,r) < \infty \}\]
\item For the quantum side, we define similarly the quantum Hausdorff content to be 
\[H_\delta^Q(A,r) = \inf \{\sum_i \mu(B_i(r_i))^\delta: A \subset \cup_1^k B_i(r), r_i \leq r\}\]
The quantum Hausdorff dimension is then given by 
\[q_H(A) = \inf_\delta \{\lim_{r \downarrow 0} H_\delta(A,r) < \infty \}\]
\end{itemize}

Then the following KPZ relation holds.
\begin{theorem}[Rhodes \& Vargas]
Let $A$ be a deterministic (or field-independent) compact subset in the interior of some domain. Let $\mu_\gamma$ be the Liouville measure on this domain with $0 \leq \gamma < 2$. Then, almost surely, the following KPZ formula holds:
\[d_H = (2+\gamma^2/2)q_H^q - \gamma^2q_H^2/2\]
where by $d_H$ and $q_H$ we denote respectively the usual and the quantum Hausdorff dimensions.
\end{theorem}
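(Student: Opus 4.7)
The plan rests on the multifractal moment estimate
\[
\EE[\mu_\gamma(B_r(z))^q] \asymp r^{\xi(q)}, \qquad \xi(q) := (2+\gamma^2/2)q - \gamma^2 q^2/2,
\]
to be established uniformly for $z$ in compact interior subsets and $q$ in a suitable range (in particular $q\in[0,1]$, which is what matters since $q_H\le 1$). I would obtain it from the approximate scaling relation $\mu_\gamma(B_r(z)) \stackrel{d}{\approx} r^{\,2+\gamma^2/2}\, e^{\gamma h_r(z)}\, \widetilde\mu(B_1(0))$, where $h_r(z)$ is the scale-$r$ circle average (Gaussian of variance $\log(1/r)$ up to a bounded boundary correction) and $\widetilde\mu$ is an independent rescaled copy of the chaos; the direct Gaussian computation $\EE[e^{q\gamma h_r(z)}] \asymp r^{-q^2\gamma^2/2}$ combined with the deterministic prefactor reproduces $\xi(q)$.

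For the easier direction $\xi(q_H)\le d_H$, I would fix $d>d_H$ and $\delta$ with $\xi(\delta)>d$. Since $d>d_H$, the set $A$ admits Euclidean coverings $\{B(z_i,r_i)\}$ with $\max_i r_i$ arbitrarily small and $\sum_i r_i^d$ arbitrarily small; by Markov and the positive moment bound,
\[
\EE\bigl[H^Q_\delta(A, \max_i r_i)\bigr] \leq \sum_i \EE[\mu_\gamma(B_i)^\delta] \lesssim \sum_i r_i^{\xi(\delta)} \leq (\max_i r_i)^{\xi(\delta)-d}\sum_i r_i^d \to 0.
\]
Hence almost surely $q_H\le \delta$; letting $\delta\downarrow \xi^{-1}(d_H)$ through a countable sequence yields $\xi(q_H)\le d_H$ almost surely.

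For the reverse direction $\xi(q_H) \geq d_H$, my plan is to invoke a Frostman-type characterization for the quantum dimension: $q_H(A)\ge s$ as soon as there is a nontrivial positive Borel measure $\nu$ on $A$ with $\iint \mu_\gamma(B_{|x-y|}(x))^{-s}\, d\nu(x)\,d\nu(y)<\infty$. Given $s<\xi^{-1}(d_H)$, one has $\xi(s)<d_H$, so a Euclidean Frostman measure $\nu$ on $A$ with finite $\xi(s)$-energy exists. Fubini together with a negative moment estimate $\EE[\mu_\gamma(B_r(z))^{-s}] \lesssim r^{-\xi(s)}$—established by the same Markov/scaling argument, now exploiting that negative Gaussian exponential moments are always finite—transfers the finite Euclidean energy into a finite quantum energy almost surely, forcing $q_H\ge s$ and hence $\xi(q_H)\ge d_H$.

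The hard part will be the negative moment control and its uniformity: $\mu_\gamma$ can be very small at ``light'' points of the field, so one must cleanly separate the factor $e^{\gamma h_r(z)}$ from the independent rescaled chaos $\widetilde\mu(B_1(0))$ and verify the latter has all negative moments via Kahane's theory. Secondary technicalities are uniformity in $z$, handled by a dyadic-net union bound combined with Kolmogorov-type continuity of $r\mapsto h_r(z)$, and the harmonic boundary correction appearing in $\tilde G_\delta$, which is absorbed into the implicit multiplicative constants.
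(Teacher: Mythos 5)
A preliminary remark on the comparison itself: the paper does not prove this statement — it is quoted from Rhodes \& Vargas, with a pointer to the literature for a short proof — so your proposal can only be measured against the known argument, not against a proof in this paper. Your easier direction is fine: Euclidean coverings of small $d$-content, Markov's inequality, and the positive-moment estimate $\EE[\mu_\gamma(B_r)^q]\asymp r^{\xi(q)}$ (the paper's Lemma \ref{scalinglemma}) give $\xi(q_H)\le d_H$ almost surely, exactly as you describe.

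The hard direction, however, has a genuine gap, and it sits precisely where you flag ``the hard part''. The claimed bound $\EE[\mu_\gamma(B_r(z))^{-s}]\lesssim r^{-\xi(s)}$ is false, and your own scaling heuristic shows why: with $\mu(B_r)\approx r^{2+\gamma^2/2}e^{\gamma h_r}\widetilde\mu$ and $h_r$ Gaussian of variance $\log(1/r)$, the Gaussian exponential moment gives $\EE[e^{-s\gamma h_r}]\asymp r^{-s^2\gamma^2/2}$ — the quadratic term does \emph{not} change sign for negative powers — so in fact $\EE[\mu(B_r)^{-s}]\asymp r^{-(2+\gamma^2/2)s-\gamma^2s^2/2}$, strictly worse than $r^{-\xi(s)}=r^{-(2+\gamma^2/2)s+\gamma^2s^2/2}$. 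Feeding the correct estimate into your Fubini/energy computation only yields $q_H\ge s$ when $(2+\gamma^2/2)s+\gamma^2s^2/2<d_H$, i.e.\ a lower bound strictly below $\xi^{-1}(d_H)$ whenever $\gamma>0$ and $d_H>0$; the two directions never meet, so the KPZ identity cannot be concluded this way. The sharp argument does not take a plain expectation of $\mu(B)^{-s}$ against the deterministic Frostman measure $\nu$: one must tilt. The standard route (Benjamini--Schramm for cascades, Rhodes--Vargas for Gaussian multiplicative chaos) takes as quantum-side test measure a chaos of parameter $s\gamma$ built \emph{over} $\nu$, roughly $\sigma(dx)=\lim_\epsilon e^{s\gamma h_\epsilon(x)-s^2\gamma^2\EE[h_\epsilon(x)^2]/2}\,\nu(dx)$; then $\sigma(B_r)\approx e^{s\gamma h_r}r^{s^2\gamma^2/2}\nu(B_r)$ while $\mu(B_r)^s\approx e^{s\gamma h_r}r^{s(2+\gamma^2/2)}$, so the Frostman exponent needed for $\sigma(B_r)\lesssim\mu(B_r)^s$ is exactly $t\ge\xi(s)$ — the Girsanov shift supplies the $-\gamma^2s^2/2$ correction that a bare negative moment can never produce. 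Two secondary points you would also need to handle: the nondegeneracy of $\sigma$ (which requires $\nu$ to carry enough dimension, automatic here since $t>s^2\gamma^2/2$), and the fact that $\mu$ is a.s.\ non-doubling, so the quantum Frostman/energy lemma you invoke must be stated with enlarged balls (or dyadic cubes) and then compared back to the content defining $q_H$.
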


\paragraph{Expected Minkowski version}\mbox{}\\
To make the literature even more colourful, we introduce yet a third version of the dimension which also satisfies the KPZ relation. We use a version of the upper Minkowski dimension, which we will henceforth call just the Minkowski dimension.
 
There are many ways to define the Minkowski dimension, for us the most convenient version uses only fixed dyadic tiling \cite{BP}:

Consider a dyadic $2^{-n}$ Minkowski content of $A$ defined by:
\[M_\delta(A,2^{-n}) = \sum_{S_i \in \mathcal{S}_n}\II(S_i \cap A \neq \emptyset)l(S_i)^\delta\]
where $\mathcal{S}_n$ is the $n$-th level dyadic covering of the domain and $l(S_i)$ the side-length of the square $S_i$. Then we define the Minkowski dimension as
\[d_M(A) = \inf_\delta \{\limsup_{n \uparrow \infty} M_\delta(A,2^{-n}) < \infty \}\]

The corresponding quantum version is given by first defining the quantum dyadic $2^{-n}$ Minkowski content:
 \[M^Q_\delta(A,2^{-n}) = \sum_{S_i \in \mathcal{S}_n}\II(S_i \cap A \neq \emptyset)\mu(S_i)^\delta\]
 and then setting
 \begin{equation*}
q_M(A) = \inf_\delta \{\limsup_{n \uparrow 0} M^Q_\delta(A,2^{-n}) < \infty \}
\end{equation*}

It is clear that the definitions work nicely also for random sets, in which case the Minkowski contents will just be random variables. 

Moreover, it will also make sense to talk about the expected quantum Minkowski dimension, where in the definition of the Minkowski dimension, we just use the expectation of the dyadic Minkowski content w.r.t the measure. So, for deterministic sets we set for example:

\[q_{M,E}(A) = \inf_\delta \{\limsup_{n \uparrow \infty} \EE_h \left(M^Q_\delta(A,2^{-n})\right) < \infty \}\]

Notice that we take the expectation of each dyadic $2^{-n}$ Minkowski before the $\limsup$. Whereas this is less natural, it allows us to work only with first moment estimates and nevertheless provide upper bounds for the quantum Hausdorff dimension.

In the next section, we will prove the analogous KPZ relation for the expected quantum Minkowski dimension, the proof of which is shorter than for the other two notions:

\begin{proposition}
Let $A$ be a fixed (or field-independent) compact subset in the interior of some domain. Let $\mu_\gamma$ be the Liouville measure on this domain with $0 \leq \gamma < 2$. Then we have the following KPZ formula:
\[d_M = (2+\gamma^2/2)q_{M,E} - \gamma^2q_{M,E}^2/2\]
where by $d_M$ and $q_{M,E}$ we denote respectively the usual (upper) and the expected quantum Minkowski dimensions.
\end{proposition}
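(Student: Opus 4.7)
The plan is to reduce both sides of the identity to dyadic box counts and to the multifractal spectrum of $\mu_\gamma$. The single analytic input I would import is the standard moment estimate for the Liouville measure on a dyadic box $S$ of side $r = 2^{-n}$ lying at bounded distance from the boundary of the domain: for every $\delta \in [0, 4/\gamma^2)$,
\[\EE[\mu_\gamma(S)^\delta] \asymp r^{\xi(\delta)}, \qquad \xi(\delta) := (2+\gamma^2/2)\delta - \gamma^2 \delta^2/2,\]
with multiplicative constants depending on $\delta$, $\gamma$ and on the distance of $S$ to the boundary, but uniform in $n$. For $\delta = 1$ this is immediate from the circle-average renormalisation of Duplantier--Sheffield (the $\delta^{\gamma^2/2}$ prefactor exactly compensates the variance blow-up, leaving a factor $r^2$); for general $\delta$ in the above range it is the classical multifractal bound for Gaussian multiplicative chaos, which I would import as a black box from \cite{RV,GMC}.

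With this in hand the argument becomes essentially combinatorial. First I would push the expectation inside the definition of the expected quantum dyadic Minkowski content, using the field-independence of $A$:
\[\EE_h\!\left[M^Q_\delta(A, 2^{-n})\right] = \sum_{S_i \in \mathcal{S}_n,\, S_i \cap A \neq \emptyset} \EE[\mu_\gamma(S_i)^\delta] \asymp N_n(A)\cdot 2^{-n\xi(\delta)},\]
where $N_n(A)$ is the number of $n$-th level dyadic boxes intersecting $A$. Compactness of $A$ together with its being in the interior of the domain guarantees that the implied constants are uniform over all relevant $S_i$. Now $N_n(A)$ encodes the Euclidean Minkowski dimension through $d_M = \inf\{s \geq 0 : \limsup_n N_n(A)\, 2^{-ns} < \infty\}$, so combining the two displays shows that $\limsup_n \EE_h[M^Q_\delta(A, 2^{-n})]$ is finite if and only if $\xi(\delta) \geq d_M$.

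It then remains to invert $\xi$. Since $\gamma < 2$, the map $\xi$ is strictly increasing on $[0, (2+\gamma^2/2)/\gamma^2]$, an interval that always contains $[0,1]$; and $\xi(0)=0$, $\xi(1)=2 \geq d_M$ for any subset of a planar domain. By the definition of $q_{M,E}$ as the infimum of the $\delta$ for which the expected content is bounded, this forces $q_{M,E} \in [0,1]$ and $\xi(q_{M,E}) = d_M$, which is exactly the claimed KPZ formula.

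The only delicate point in the plan is the uniformity of the moment estimate in the first paragraph, and this is where the field-independence hypothesis on $A$ really plays a role: one needs the collection $\{S_i \in \mathcal{S}_n : S_i \cap A \neq \emptyset\}$ to be deterministic so that linearity of expectation applies without measurability subtleties, and one needs all such $S_i$ to be at uniformly positive distance from $\partial D$ so that the implicit constants in $\EE[\mu_\gamma(S_i)^\delta] \asymp 2^{-n\xi(\delta)}$ do not depend on $i$. Beyond that, no almost-sure argument or limit exchange is required, which is the structural reason why this version of KPZ is cheaper to prove than the almost-sure Hausdorff formulation of Rhodes--Vargas.
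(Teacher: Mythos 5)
Your proof is correct and follows essentially the same route as the paper: both rest on the moment scaling lemma $\EE\mu_\gamma(S)^q \asymp r^{\xi(q)}$ for boxes at uniformly positive distance from the boundary, plus linearity of expectation made available by field-independence of $A$. The only stylistic difference is that you fold the two directions into a single inversion of $\xi$ via the box count $N_n(A)$, whereas the paper argues the upper and lower bounds separately by choosing $q$ with $\xi(q) = d_M \pm \delta$; your ``if and only if $\xi(\delta) \geq d_M$'' is slightly imprecise at the boundary point $\xi(\delta) = d_M$, but this does not affect the infimum and hence the conclusion.
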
 

\paragraph{Relations between the notions}\mbox{}\\
These three different notions of the quantum dimension and hence the KPZ relation all have different benefits: 

\begin{itemize}
\item Box counting version: it provides a notion of quantum balls having more physical content and is probably easiest to link to discretization of the field, and hence discrete models.
\item Almost sure Hausdorff version: whereas the box counting version is averaged over the field, here we have an almost sure relation; it also has the usual advantages and specificities with respect to the Minkowski dimension. However, it proved difficult to use for field-dependent sets.
\item Expected Minkowski: this is easiest to work with for both dependent and independent sets; one might say it is less natural, however it certainly has enough substance to give useful bounds on the Hausdorff dimension.
\end{itemize}

In the next section, we will also prove two relations between the expected quantum Minkowski and quantum Hausdorff dimensions. 

Firstly, we show that for deterministic and measure-independent sets we have the following relation: if the Euclidean Minkowski and Hausdorff dimensions of a set agree, then also its expected Minkowski dimension and Hausdorff dimension agree on the quantum side. This shows that we are not losing much in general by using the Minkowski version

Secondly, we show that on the quantum side the quantum Hausdorff dimension is almost surely smaller than the expected Minkowski dimension, even if the measured set depends on the field. This will allow us to prove results about the almost sure Hausdorff version, by fist proving them for the expected Minkowski dimension.

\paragraph{KPZ relation for dependent sets}\mbox{}\\
Notice that in all three theorems we require the sets in question to be either fixed or independent of the underlying measure. Hence it is natural to ask, to what extent the KPZ relation remains true for sets that depend on the measure. It comes out that there is no uniform theorem as for example Kaufman's theorem for dimension doubling in Brownian Motion. 

In fact, given that the KPZ relation stems from a multifractal behaviour \cite{GMC}, it is quite intuitive that for example fixed level sets should help us construct already a counterexample. The problem is that the precise counterexamples depend on the "sensitivity" of the definition and the intuitively clearest versions won't always work:

For almost sure Hausdorff dimension finding a counterexample is relatively easy. One just needs to look at $\gamma-$-thick points \cite{KAHANE} \cite{PERES} \cite{BAR2}, i.e. points such that $\lim_{r \downarrow 0} \frac{h_\epsilon(z)}{\log 1/r} = \gamma$. Their Hausdorff dimension is smaller than two, but they are of full measure on the quantum side, violating the usual KPZ relation.

For expected box-counting measure and the Minkowski dimension finding a counterexample is somewhat harder, as they are less sensitive. For example $\gamma-$ thick points, being dense, would have trivial dimensions on both sides. To produce a simple counterexample one needs to go one step further. We can still rely on the height of the field to produce a fractal as in \cite{PERES}, but we need to intersect this field-dependent fractal with a deterministic fractal to arrive at the "sensitivity" level of these definitions.

Now these previous examples might look unnatural - in some sense we were really trying to cook up counterexamples. Thus it would be interesting to find counterexamples where the measure-dependent sets are not a priori chosen to violate KPZ. This was exactly the aim of this paper: we look at the zero level lines and SLE$_\kappa$ flow lines given by the coupling of the GFF and the SLE and show that the expected Minkowski and almost sure Hausdorff versions of the KPZ relation do not hold for these sets. Thus, even for rather natural couplings the KPZ relation cannot be taken as given. 

\section{Expected Minkowski dimension: KPZ formula and relation to almost sure Hausdorff dimension}

\subsection{KPZ formula for expected Minkowski dimension}

In this subsection we will prove the following proposition:

\begin{proposition}[KPZ formula for expected Minkowski dimension]\label{KPZM}
Let $A$ be a fixed (or field-independent) compact subset in the interior of some domain. Let $\mu_\gamma$ be the Liouville measure on this domain with $0 \leq \gamma < 2$. Then we have the following KPZ formula:
\[d_M = (2+\gamma^2/2)q_{M,E} - \gamma^2q_{M,E}^2/2\]
where by $d_M$ and $q_{M,E}$ we denote respectively the usual (upper) and the expected quantum Minkowski dimensions.
\end{proposition}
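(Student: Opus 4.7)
The plan is to reduce the KPZ formula to a single moment estimate for the Liouville measure combined with the definition of the Minkowski content. Let $\xi(q) := (2+\gamma^2/2)q - \gamma^2 q^2/2$. The central input will be the scaling bound
\[\EE\bigl[\mu_\gamma(S)^q\bigr] \asymp l(S)^{\xi(q)}\]
valid uniformly for dyadic squares $S$ contained in a fixed compact subset of the interior of $D$ and uniformly for $q$ in compact subsets of $[0,1]$. I would establish this by first passing from the Duplantier--Sheffield normalisation $\mu_\delta = \delta^{\gamma^2/2} e^{\gamma h_\delta}\,dz$ to the variance-normalised chaos, which differs from $\mu_\delta$ only by a multiplicative factor bounded above and below on any compact of the interior (via the remark following Theorem \ref{Liouville} on the harmonic correction $\tilde G_\delta$); then using the Gaussian scaling of the GFF to bring an arbitrary square of side $2^{-n}$ to a unit square; and finally invoking Kahane's standard moment bounds for Gaussian multiplicative chaos, whose range of existence $(0,4/\gamma^2)$ comfortably contains $[0,1]$.

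Given the moment estimate, the remainder of the argument is a direct comparison. Because $A$ is deterministic (or at least field-independent), the indicators factor out of the expectation, so
\[\EE\bigl[M^Q_q(A, 2^{-n})\bigr] = \sum_{S_i \in \mathcal{S}_n} \II(S_i \cap A \neq \emptyset)\,\EE\bigl[\mu(S_i)^q\bigr] \asymp N_n(A)\cdot 2^{-n\xi(q)} \asymp M_{\xi(q)}(A, 2^{-n}),\]
where $N_n(A) := \#\{S_i \in \mathcal{S}_n : S_i \cap A \neq \emptyset\}$ and we used $M_d(A,2^{-n}) = N_n(A)\,2^{-nd}$. Taking $\limsup_n$ on both sides yields $\limsup_n \EE[M^Q_q(A,2^{-n})] < \infty$ if and only if $\xi(q) \geq d_M(A)$, whence
\[q_{M,E}(A) = \inf\bigl\{q \in [0,1] : \xi(q) \geq d_M(A)\bigr\}.\]

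To finish, I observe that for $\gamma < 2$ the derivative $\xi'(q) = 2 + \gamma^2/2 - \gamma^2 q$ is strictly positive on $[0,1]$, while $\xi(0) = 0$ and $\xi(1) = 2$. Hence $\xi : [0,1] \to [0,2]$ is a strictly increasing continuous bijection, and since $A \subset \RR^2$ has $d_M(A) \in [0,2]$, the infimum above is attained and satisfies $\xi(q_{M,E}(A)) = d_M(A)$, which is exactly the stated KPZ identity after rearrangement.

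The main obstacle will be the moment estimate of the first step, and specifically its uniformity both in $q$ and in the location of $S$ within the fixed compact subdomain: any $q$-dependent prefactor that degenerated as $q \to 0$ or $q \to 1$, or any positional prefactor growing with $n$, would survive the sum over the $O(2^{2n})$ squares and shift the threshold. Once this uniform bound is in hand, the rest is bookkeeping with the definitions of the two Minkowski contents.
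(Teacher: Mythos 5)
Your proof is correct and takes essentially the same route as the paper: both hinge on the first-moment scaling estimate $\EE[\mu(S)^q]\asymp l(S)^{\xi(q)}$ for small squares away from the boundary (the paper's Lemma \ref{scalinglemma}, which it cites from the GMC literature rather than reproving), combined with factoring the deterministic indicator out of the expectation and reading off the threshold from the definition of the Minkowski content. The paper phrases the two inclusions as a separate $\pm\delta$ argument rather than the single identity $\EE[M^Q_q]\asymp M_{\xi(q)}$, but the content is identical; the one small imprecision in your write-up is the claim that finiteness of $\limsup_n\EE[M^Q_q]$ holds \emph{if and only if} $\xi(q)\geq d_M$, which is ambiguous at the borderline $\xi(q)=d_M$ --- this does not matter, since passing to the infimum and using the continuity of $\xi$ recovers $\xi(q_{M,E})=d_M$ regardless.
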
  

The proof is a simple consequence of the scaling properties that we state as a lemma. For the proof and slightly generalized versions, we refer to one of the many newer works on multiplicative chaos, including \cite{RRV} \cite{RV}, but also to \cite{DS} where it is approached slightly differently. 

\begin{lemma}[Scaling relation of Liouville balls]\label{scalinglemma}
Consider the Liouville measure $\mu = \mu_\gamma$ for $0 < \gamma < 2$. Then for any $q \in [0,1]$ and any fixed ball $B(r) \subset D$ of radius $\epsilon > r > 0$ at least at distance $\epsilon$ from the boundary, we have 
\[\EE\mu(B(r))^q \asymp r^{(2+\gamma^2/2)q - \gamma^2q^2/2}\]
where the implied constant depends on $q$.
\end{lemma}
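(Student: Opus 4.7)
The plan is to prove this scaling estimate via a Markov decomposition of the GFF around the center $z_0$ of $B(r)$, combined with the Brownian circle-average process of Duplantier--Sheffield and the exact scale invariance of the zero-boundary Green's function on a disk. By the domain Markov property, write $h|_{B(z_0, r)} = h_{\mathrm{harm}} + h_{\mathrm{zero}}$, with $h_{\mathrm{zero}}$ an independent zero-boundary GFF on $B(z_0, r)$; the circle average $h_r(z_0) = h_{\mathrm{harm}}(z_0)$ is then Gaussian with variance $\log(\CR(z_0, D)/r) = \log(1/r) + O_\epsilon(1)$, the $O_\epsilon(1)$ being uniformly bounded since the ball is at distance $\geq \epsilon$ from $\partial D$.

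The change of variables $z = z_0 + rw$, combined with the scale invariance of the disk Green's function and the renormalization convention $\delta^{\gamma^2/2} e^{\gamma h_\delta}\,dz$ (setting $\delta = r\hat\delta$ and sending $\hat\delta \to 0$), yields the scale identity
\[\mu(B(z_0, r)) \stackrel{(d)}{=} r^{2+\gamma^2/2}\, e^{\gamma h_r(z_0)}\int_{B(0,1)} e^{\gamma H_r(w)}\,d\hat\mu(w),\]
where $\hat\mu$ is the circle-average Liouville measure of an independent zero-boundary GFF on $B(0,1)$, $H_r$ is the harmonic extension to $B(0,1)$ of the centered fluctuations of $h$ on $\partial B(z_0, r)$, and $\hat\mu$ is independent of the pair $(h_r(z_0), H_r)$. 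Taking the $q$-th moment and using the Gaussian exponential formula $\EE e^{q\gamma h_r(z_0)} \asymp r^{-q^2\gamma^2/2}$, the claimed exponent $(2+\gamma^2/2)q - \gamma^2 q^2/2$ follows provided the residual $Z_r := \int_{B(0,1)} e^{\gamma H_r(w)}\,d\hat\mu(w)$ satisfies $\EE[e^{q\gamma h_r(z_0)}Z_r^q] \asymp \EE e^{q\gamma h_r(z_0)}$.

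To obtain $\EE Z_r^q \asymp 1$ uniformly in $r$, the upper bound comes from Kahane's moment inequalities for Gaussian multiplicative chaos (applicable since $q \leq 1 < 4/\gamma^2$), and the lower bound from $Z_r \gtrsim e^{-\gamma\|H_r\|_{L^\infty(K)}}\hat\mu(K)$ on a compact $K \subset B(0,1)$, together with Paley--Zygmund and positivity of the Liouville measure. It then remains to absorb the $e^{q\gamma h_r(z_0)}$ factor into $Z_r^q$ via Cameron--Martin, which requires controlling the cross-covariance $\EE[h_r(z_0) H_r(w)]$. A direct Green's function calculation shows that after cancellation of the $-\log r$ terms one is left with
\[\EE[h_r(z_0) H_r(w)] = \int P(w,\phi)\, h_D(z_0, z_0 + re^{i\phi})\,d\phi - h_D(z_0, z_0) = O_\epsilon(r),\]
where $h_D(x, y) := G_D(x, y) + \log|x-y|$ is the smooth harmonic part of the Green's function; hence the Girsanov shift perturbs $H_r$ by a uniformly small deterministic function on compact sets and modifies $Z_r$ by only a bounded multiplicative factor.

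The main obstacle is this uniform control of $\EE Z_r^q$, which rests on the cancellation $\EE[h_r(z_0) H_r(w)] = O_\epsilon(r)$ rather than the naive bound $O(\log 1/r)$; without it the Cameron--Martin shift could add a large drift to $H_r$ and introduce an unintended power of $r$ into $\EE Z_r^q$, breaking the clean scaling exponent. The rest of the argument --- the scale identity, the Gaussian moment computation, and Kahane's GMC bounds --- is routine once this geometric cancellation is verified.
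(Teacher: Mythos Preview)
The paper does not actually prove this lemma; it states it and immediately refers to the literature (Robert--Vargas, Rhodes--Vargas, Duplantier--Sheffield) for the proof. So there is no proof in the paper to compare against, only the references.

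Your argument is essentially the standard one found in those references, carried out carefully in the circle-average normalization of \cite{DS}. The Markov decomposition, the scale identity $\mu(B(z_0,r)) = r^{2+\gamma^2/2} e^{\gamma h_r(z_0)} Z_r$, and the handling of the residual $Z_r$ via Cameron--Martin plus GMC moment bounds are all correct. The key cancellation you single out --- that the cross-covariance $\EE[h_r(z_0)H_r(w)]$ is $O_\epsilon(r)$ rather than $O(\log 1/r)$ --- is indeed the crux, and your verification via the smoothness of the harmonic part $h_D$ of the Green's function is sound: the $-\log r$ terms in $\mathrm{Cov}(h_r(z_0), \tilde h(z_0+rw))$ and in $\mathrm{Var}(h_r(z_0))$ cancel exactly, leaving a first difference of the smooth function $y \mapsto h_D(z_0,y)$.

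One small remark: for the upper bound on $\EE Z_r^q$, invoking Kahane's convexity inequality is slightly more than needed. Since $q \leq 1$, Jensen already gives $\EE Z_r^q \leq (\EE Z_r)^q$, and by independence of $H_r$ and $\hat\mu$ one computes directly
\[
\EE Z_r = \int_{B(0,1)} e^{\gamma^2\mathrm{Var}(H_r(w))/2}\,\EE[d\hat\mu(w)] \asymp \int_{B(0,1)} (1-|w|^2)^{-\gamma^2/2}(1-|w|)^{\gamma^2/2}\,dw \asymp 1,
\]
using that $\mathrm{Var}(H_r(w)) = -\log(1-|w|^2) + O(r^2)$ and $\EE[d\hat\mu(w)] = \CR(w,B(0,1))^{\gamma^2/2}\,dw$. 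This avoids having to compare the full covariance of $H_r + \hat h$ to a reference kernel.
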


\begin{remark}
If the distance of the ball is comparable to the boundary, one needs to be more careful as the exact scaling holds for the covariance kernel given by $\log_+ \frac{1}{\abs{x-y}}$ and the correction term of the Green's function starts playing a greater role near the boundary.
\end{remark}

\begin{proof}[Proof of proposition]\mbox{}
\paragraph{\small{Upper bound}}\mbox{}\\
Let $\delta > 0, 1 \geq q > 0$ be such that $d_M + \delta = (2+\gamma^2/2)q-\gamma^2q^2/2$. We want to show that $\limsup_{n} \EE M^Q_q(E,2^{-n}) < \infty$.  As the Minkowski dimension of $A$ is $d_M$, then for sufficiently large $n$ 
\[M_{d_M + \delta}(A,2^{-n}) \lesssim 2^{-n\delta/2}\]
Thus, for the same covering we get using the scaling relation of \ref{scalinglemma}, that 
\[\EE (M^Q_{q}(A,2^{-n}) \lesssim 2^{-n\delta/2}\]
Thus $q_{M,E} \geq q$. Now letting $\delta \downarrow 0$, we get the upper bound.

\paragraph{\small{Lower bound}}\mbox{}\\
The lower bound follows similarly. As $d_M$ is the Minkowski dimension for $A$, then for any $\delta > 0$, we have infinitely many $n \in \mathbb{N}$ such that $M_{d_M - \delta}(A,2^{-n}) > R$ for any $R > 0$. Now consider $1 \geq q > 0$ such that $d_M - \delta = (2+\gamma^2/2)q - \gamma^2q^2/2$. Then for all the same indexes $n$, we have $\EE M^Q_q(A,2^{-n}) > R$ and the lower bound follows.
\end{proof}

\begin{remark}
Notice that for the upper bound we could use an "almost sure" version of the Minkowski dimension. Indeed, from Markov's inequality 
\[\PP(M^Q_{q}(A,2^{-n})\geq 2^{-n\delta/4}) \leq 2^{-n\delta/4}\]
Now this sequence of probabilities is summable and thus by Borel-Cantelli the event only happens finitely often.Thus in fact almost surely $\limsup_n M^Q_{q}(A,2^{-n})=0$. 
\end{remark}

\begin{remark}
Also, it is easy to see that the same result holds for sets that are independent of the field. 
\end{remark}

\subsection{Relations between expected Minkowski and almost sure Hausdorff dimension}

In this section we bring out two results. First, for fixed (and field-independent) sets we conclude an agreement between the expected Minkowski and almost sure Hausdorff versions of the quantum dimension, given that there is agreement between the dimensions on the Euclidean side. Second, we prove an inequality for the quantum side holding even for dependent sets.

The first relation, as both the Hausdorff and Minkowski dimension satisfy the very same KPZ relation, is a straightforward corollary of the previous proposition: 

\begin{corollary}
Consider the Liouville measure for $0 \leq \gamma < 2$ in some domain. Suppose $A$ is deterministic (or field-independent) compact set in the interior of some domain, such that its Euclidean Minkowski and Hausdorff dimensions agree. Then also, its expected quantum Minkowski dimension and quantum Hausdorff dimensions agree. 
\end{corollary}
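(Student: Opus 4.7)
The plan is to chain together the two KPZ formulas and exploit the fact that the KPZ quadratic is strictly increasing on $[0,1]$ in the relevant regime. The quantum dimensions $q_{M,E}(A)$ and $q_H(A)$ both lie in $[0,1]$, and the KPZ relation expresses the Euclidean dimension as a specific quadratic function of the quantum dimension. Invertibility of that quadratic will let us pass from equality on the Euclidean side to equality on the quantum side.

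First, I would write down the two KPZ relations explicitly. Proposition~\ref{KPZM} above gives $d_M(A) = F(q_{M,E}(A))$, while the Rhodes--Vargas theorem stated in the preliminaries gives $d_H(A) = F(q_H(A))$ almost surely, where
\[F(q) = (2+\gamma^2/2)q - \gamma^2 q^2/2.\]
Both relations apply because $A$ is deterministic (or field-independent). By hypothesis the Euclidean dimensions agree, so $F(q_{M,E}(A)) = F(q_H(A))$ almost surely.

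Next I would verify that $F$ is strictly increasing on $[0,1]$. The derivative is $F'(q) = (2+\gamma^2/2) - \gamma^2 q$, which on $[0,1]$ is bounded below by $2 - \gamma^2/2 > 0$ because $\gamma < 2$. Hence $F$ is injective on $[0,1]$. Since $q_{M,E}(A), q_H(A) \in [0,1]$ (both are quantum dimensions of subsets of a two-dimensional domain, and this is already built into the KPZ statements cited), the equality $F(q_{M,E}(A)) = F(q_H(A))$ forces $q_{M,E}(A) = q_H(A)$ almost surely.

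There is no real obstacle here; the corollary is essentially algebraic, given the two KPZ formulas and the monotonicity of the KPZ quadratic. The only point requiring any care is making sure the two dimensions genuinely lie in the domain of invertibility of $F$, but this is already guaranteed by the hypotheses of Proposition~\ref{KPZM} and the Rhodes--Vargas theorem, both of which yield quantum exponents in $[0,1]$.
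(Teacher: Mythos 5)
Your proof is correct and is precisely the argument the paper has in mind: the paper states the corollary follows directly from the fact that $q_{M,E}$ and $q_H$ satisfy the same KPZ quadratic $F(q) = (2+\gamma^2/2)q - \gamma^2 q^2/2$, and your verification that $F$ is strictly increasing on $[0,1]$ (since $F'(q) \geq 2-\gamma^2/2 > 0$ for $\gamma<2$) supplies the injectivity that makes the deduction go through.
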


The second relation importantly also holds for sets that can depend on the measure:

\begin{proposition}\label{MinHff}
Consider the Liouville measure with $0 \leq \gamma < 2$ in some domain. For any random set coupled with the field, the quantum Hausdorff dimension is almost surely bounded above by the expected quantum Minkowski dimension.
\end{proposition}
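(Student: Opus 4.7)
The plan is a standard Fatou-plus-subsequence argument followed by a cover-conversion. Fix $\delta > q_{M,E}(A)$; I will show $q_H(A) \leq \delta$ almost surely, and then take $\delta \downarrow q_{M,E}(A)$ along a countable sequence to conclude. By definition of $q_{M,E}(A)$, $\limsup_n \EE[M^Q_\delta(A,2^{-n})] < \infty$. Fatou's lemma then yields $\liminf_n M^Q_\delta(A,2^{-n}) < \infty$ almost surely; on this full-measure event I would extract a (random) subsequence $n_k \uparrow \infty$ and a (random) $K < \infty$ with $M^Q_\delta(A, 2^{-n_k}) \leq K$ for all $k$.

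The second step is to turn the dyadic cover underlying $M^Q_\delta(A,2^{-n_k})$ into a ball cover suitable for the quantum Hausdorff content. Each level-$n_k$ dyadic square $S$ intersecting $A$ sits inside its circumscribed ball $B_S$ of radius $r_k := \sqrt{2}\cdot 2^{-n_k-1}$, and $B_S$ overlaps at most three other level-$n_k$ dyadic squares. Subadditivity of $x\mapsto x^\delta$ on $\delta \in [0,1]$ gives
\[
\mu(B_S)^\delta \leq \mu(S)^\delta + \sum_{S'\text{ adj.\ to }S} \mu(S')^\delta,
\]
and summing over $S$ in the Minkowski cover (each level-$n_k$ dyadic square appears at most $9$ times in the double sum) yields $\sum_S \mu(B_S)^\delta \leq 9\,\widetilde{M}^Q_\delta(A, 2^{-n_k})$, where $\widetilde{M}^Q_\delta$ is the Minkowski-type sum over level-$n_k$ dyadic squares within distance $2^{-n_k}$ of $A$. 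This upper-bounds $H^Q_\delta(A, r_k)$.

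To conclude, I apply the first-step Fatou argument also to $\widetilde{M}^Q_\delta$. The scaling Lemma~\ref{scalinglemma} shows $\EE[\widetilde{M}^Q_\delta(A,2^{-n})] \asymp \EE[M^Q_\delta(A,2^{-n})]$ (the thickened sum has at most $9$ times the number of terms, each with the same expected scaling), so along a further random subsequence $\widetilde{M}^Q_\delta(A,2^{-n_k}) \leq K'$ almost surely. Then $H^Q_\delta(A, r_k) \leq 9K'$ uniformly in $k$, and monotonicity of $r\mapsto H^Q_\delta(A,r)$ as $r\downarrow 0$ promotes this to $\lim_{r\downarrow 0} H^Q_\delta(A,r) < \infty$ almost surely, giving $q_H(A) \leq \delta$ a.s. Intersecting full-measure events along $\delta_j \downarrow q_{M,E}(A)$ delivers the proposition.

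The main subtle point is the comparison $\EE[\widetilde{M}^Q_\delta] \asymp \EE[M^Q_\delta]$ in the third step. For deterministic (or field-independent) $A$ it is immediate from the scaling lemma. For random $A$ coupled with the field, one must control the correlation between $\II(S'\cap A \ne \emptyset)$ and $\mu(S)^\delta$ for adjacent $S,S'$; this reduces to local comparability of the Liouville mass of a dyadic square and that of its neighbours at the same scale, which follows from a bounded Cameron--Martin shift of the underlying GFF. A clean alternative that bypasses this entirely is to adopt $\widetilde{M}^Q_\delta$ itself as the definition of the expected quantum Minkowski content in Proposition~\ref{KPZM} --- the proof of the KPZ relation carries over verbatim with the same constants --- in which case Step 2 gives the needed bound directly with no further argument.
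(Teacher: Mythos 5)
Your opening Fatou-plus-subsequence step is fine, and it is in substance the same mechanism as the paper's argument via the increasing events $E_{N,R}=\{M^Q_\delta(A,2^{-n})>R \text{ for all } n\ge N\}$; likewise the core inequality ``a level-$n$ dyadic cover for the Minkowski content is also a Hausdorff cover'' is the common engine. The genuine gap is your third step, the claim $\EE[\widetilde M^Q_\delta(A,2^{-n})]\asymp\EE[M^Q_\delta(A,2^{-n})]$. The summands of the thickened content are $\EE\bigl[\mu(S')^\delta\,\II(d(S',A)\le 2^{-n})\bigr]$ with a \emph{field-dependent} indicator, so Lemma \ref{scalinglemma} gives you nothing (you cannot factor the expectation), and the paper's remark after this very proposition stresses exactly that no quantum-side scaling estimate is available for coupled sets. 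The proposed repair via a ``bounded Cameron--Martin shift'' does not fill the hole: comparing $\mu(S')$ with $\mu(S)$ for two adjacent squares under the \emph{same} realization of the field is not a Cameron--Martin statement, the Liouville measure is not doubling (ratios of masses of neighbouring squares are unbounded with heavy tails), and in any case what you need is a comparison \emph{conditionally on} a field-dependent event such as $\{S\cap A\neq\emptyset\}$ --- precisely the regime where, in the intended application ($A$ an SLE flow line), Sections 4--6 show that conditioning changes the law of the field near $S$. So $\limsup_n\EE[\widetilde M^Q_\delta]<\infty$ does not follow from $\delta>q_{M,E}(A)$ as written, and this is the step carrying all the difficulty of ``random set coupled with the field.''

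Your fallback --- taking $\widetilde M^Q_\delta$ as the definition of the expected quantum Minkowski content --- is not a free move either: Theorem \ref{thmD} bounds only the content over squares that \emph{intersect} the curve, so Corollary \ref{noKPZ} would no longer follow without redoing the upper bound of Section 6 for squares that merely come within distance $2^{-n}$ of the curve; ``verbatim'' overstates it. The paper avoids the whole issue at the cover-conversion stage: it notes that the quantum Hausdorff dimension may equivalently be computed with squares in place of balls, after which the level-$n$ Minkowski family is itself an admissible Hausdorff cover and $H^Q_\delta(A,2^{-n})\le M^Q_\delta(A,2^{-n})$ holds pointwise, with no neighbouring squares and no expectation comparison entering anywhere. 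To your credit, your Step 2 is in effect an attempt to prove that ball/square equivalence rather than assert it; but the proof of it must be soft (pointwise, cover-by-cover), not routed through expectations of field-dependent quantities. If you switch to square covers in the Hausdorff content, your Fatou argument immediately completes the proof; if you insist on balls, you must first establish the equivalence separately.
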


To prove this, first notice that in fact we could equally well use squares instead of balls in our definition of the (quantum) Hausdorff dimension. 

\begin{proof}
Suppose that with positive probability $p > 0$ the quantum Hausdorff dimension of the set $A$ satisfies $q_H(A) > \delta$. Then also 
\[\PP\left(\lim_{n \uparrow \infty} H^Q_\delta(A,2^{-n}) = \infty\right) = p\]
where we use squares instead of balls in the covering. But now every covering used in the Minkowski dimension also provides a suitable covering whose content must be larger than $H^Q_\delta(A,2^{-n})$. Hence it follows that  
\[\PP\left(\liminf_{n \uparrow \infty} M^Q_\delta(A,2^{-n}) = \infty\right)  \geq p\]
Now fix some $R > 0$ large and define the event 
\[E_{N,R} = \{M^Q_\delta(A,2^{-n}) > R \text{ for all } n \geq N\}\]
The events $E_{N,R}$ are increasing in $N$ and
\[\bigcup E_{N,R} = \{\liminf_{n \uparrow \infty} M^Q_\delta(A,2^{-n}) = \infty\}\]
Thus by countable additivity there is some $N_R$ such that $\PP(E_{N_R,R}) > p/2$. But then for all $n > N_R$
\[\EE(M^Q_\delta(A,2^{-n})) \geq Rp/2\]
And thus 
\[\limsup_{n \uparrow \infty} \EE \left(M^Q_\delta(A,2^{-n})\right) \geq Rp/2\]
But $p > 0$ was fixed and we can pick $R$ arbitrarily large. Therefore 
\[\limsup_{n \uparrow \infty} \EE \left(M^Q_\delta(A,2^{-n})\right) = \infty\]
and $q_{M,E}(A) \geq \delta$. As this holds for all $\delta$ with $\PP(q_H(A) > \delta) > 0$, we have the claim.
\end{proof}

\begin{remark}
Notice that we do indeed a proof. Namely, we have no scaling result similar to lemma \ref{scalinglemma} at our disposal. So we do not a priori know that the Hausdorff and Minkowski contents scale well on the quantum side. Secondly, more direct approaches are limited by the fact that our definition of the Minkowski dimension involved an expectation inside the $\limsup$. 
\end{remark}

\section{Almost sure Hausdorff dimension of the zero level line does not satisfy the KPZ relation}
In this section we show that the expected Minkowski and almost sure Hausdorff versions of the usual KPZ relation do not hold for zero level lines. From now on we fix our underlying domain to be the upper half plane.

\begin{proposition}\label{LevL}
Consider the Liouville measure $\mu_\gamma$ with $0 \leq \gamma < 2$ in the upper half plane. The expected quantum Minkowski dimension of the zero level line drawn up to some finite stopping time satisfies $q_{M,E} \leq \frac{3}{4+\gamma^2}$. Hence the usual KPZ relation does not hold. 
\end{proposition}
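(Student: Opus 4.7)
The plan is to bound $\EE[M^Q_q(\eta,2^{-n})]$ through a conditional first moment estimate in which conditioning on the SLE$_4$ curve $\eta$ suppresses the usual multifractal enhancement. By Theorem \ref{thmCL}, conditional on $\eta$ sampled up to the stopping time $T$, the GFF decomposes as $h = h^0_T + H_T$, where $h^0_T$ is an independent zero-boundary GFF on the slit domain $H_T = \HH \setminus \eta([0,T])$, and $H_T$ is a harmonic function uniformly bounded by $\lambda = \pi/2$. All the work is to exploit that, close to $\eta$, the effective ``cap'' on fluctuations of $h^0_T$ shifts from the regularisation scale $\delta$ to the distance-to-$\eta$ scale.

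First I estimate $\EE[\mu(S)\mid\eta]$ for dyadic squares $S$ of side $r=2^{-n}$ that meet $\eta$. For any $z\in S$ we have $\CR(z,H_T)\leq \mathrm{dist}(z,\eta)\leq \sqrt{2}\,r$, and the variance of the circle average $h^0_{T,\delta}(z)$ equals $\log(\CR(z,H_T)/\delta)+O(1)$. The Gaussian moment identity combined with boundedness of $H_T$ yields
\[\EE\bigl[\delta^{\gamma^2/2}e^{\gamma h_\delta(z)}\bigm|\eta\bigr]\lesssim \CR(z,H_T)^{\gamma^2/2}\lesssim r^{\gamma^2/2},\]
so integrating over $S$ and passing to the weak limit of the approximating measures produces $\EE[\mu(S)\mid\eta]\lesssim r^{2+\gamma^2/2}$. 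For $0<q<1$ the conditional Jensen inequality then gives
\[\EE[\mu(S)^q\mid\eta]\leq \bigl(\EE[\mu(S)\mid\eta]\bigr)^q\lesssim r^{(2+\gamma^2/2)q}.\]
Compared with the unconstrained multifractal bound $r^{(2+\gamma^2/2)q-\gamma^2q^2/2}$ of Lemma \ref{scalinglemma}, the quadratic correction has been killed precisely because the conformal radius is forced to be of order $r$ rather than $\delta$.

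It remains to sum over the squares meeting $\eta$. Since SLE$_4$ has Euclidean dimension $3/2$ by \cite{BEF}, a standard one-point estimate gives $\PP(S\cap\eta\neq\emptyset)\lesssim r^{1/2}$, and hence $\EE[\#\{S\in\mathcal{S}_n:S\cap\eta\neq\emptyset\}]\lesssim 2^{n(3/2+\varepsilon)}$ for any $\varepsilon>0$. Combining,
\[\EE[M^Q_q(\eta,2^{-n})]\lesssim 2^{n(3/2+\varepsilon-(2+\gamma^2/2)q)},\]
which is bounded (in fact decays) as soon as $q>(3/2+\varepsilon)/(2+\gamma^2/2)$. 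Letting $\varepsilon\downarrow 0$ yields $q_{M,E}(\eta)\leq 3/(4+\gamma^2)$. A direct algebraic check shows that for every $\gamma>0$ this value is strictly smaller than the solution of the usual KPZ relation $3/2=(2+\gamma^2/2)q-\gamma^2q^2/2$, so the KPZ formula indeed fails.

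The main technical obstacle will be the first step: to make the moment computation $\EE[\mu(S)\mid\eta]\lesssim r^{2+\gamma^2/2}$ rigorous, one must justify the interchange of the conditioning on $\eta$ with the weak limit defining $\mu$, and carefully control the (a priori only locally bounded) harmonic correction $H_T$ along the curve, where it is not canonically defined. The remainder of the argument is a routine Markov-type covering computation using only the Euclidean Minkowski bound for SLE$_4$.
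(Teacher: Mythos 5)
Your proof follows essentially the same route as the paper: condition on the SLE$_4$ curve $\eta$, bound $\EE[\mu(S)\mid\eta]$ for squares meeting $\eta$ using the fact that the conformal radius of $H_T$ is $O(l(S))$ there, apply Jensen's inequality with the concave map $x\mapsto x^q$, and sum. The one genuine (if small) difference is the final counting step: the paper invokes the almost-sure Minkowski bound for SLE$_4$ to say the curve is covered by $O(r^{-3/2-\delta})$ dyadic squares, whereas you use the one-point estimate $\PP(S\cap\eta\neq\emptyset)\lesssim r^{2-d}=r^{1/2}$ and linearity, bounding $\EE[\#\{S: S\cap\eta\neq\emptyset\}]$ directly. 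Your variant is slightly cleaner, since it makes the averaging over $\eta$ completely explicit and avoids a random implicit constant; the exponent obtained is the same. Also, you do not need the extra $\varepsilon$ you inserted: the one-point estimate already yields $\EE[\#]\lesssim r^{-3/2}$ exactly.

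Two small inaccuracies in your first step deserve correction, even though they do not affect the final bound. First, the Koebe inequalities go the other way: for $z\in H_T$ one has $\mathrm{dist}(z,\partial H_T)\leq \CR(z,H_T)\leq 4\,\mathrm{dist}(z,\partial H_T)$, so it is $\CR(z,H_T)\leq 4\,\mathrm{dist}(z,\eta)\lesssim r$, not $\CR\leq\mathrm{dist}$. Second, the claimed variance identity $\mathrm{Var}(h^0_{T,\delta}(z))=\log(\CR(z,H_T)/\delta)+O(1)$ is only valid when $z$ is at distance at least $\delta$ from $\eta$; for $z$ closer than $\delta$ it would produce a negative variance. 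Consequently, the intermediate inequality $\EE[\delta^{\gamma^2/2}e^{\gamma h_\delta(z)}\mid\eta]\lesssim\CR(z,H_T)^{\gamma^2/2}$ actually fails in the $\delta$-collar of $\eta$ (where the left side is at least $\delta^{\gamma^2/2}>\CR^{\gamma^2/2}$); what is true, and all you need, is that the variance is bounded above by $\log(r/\delta)+O(1)$ uniformly over $S$ once $\delta\leq r$, so that the final estimate $\EE[\delta^{\gamma^2/2}e^{\gamma h_\delta(z)}\mid\eta]\lesssim r^{\gamma^2/2}$ holds everywhere on $S$. The paper instead splits the square into the part at distance at least $\delta_n$ from $\eta$ (where the variance formula is exact, $\tilde G_{\delta_n}=\tilde G$) and the $\delta_n$-collar of $\eta$, whose contribution it bounds by $O(1)$ times the Lebesgue measure of that collar and shows to vanish as $\delta_n\downarrow 0$. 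You flagged this as ``the main technical obstacle''; that diagnosis is right, and either of the two fixes just described closes it. The concern you raise about controlling $H_T$ on the curve is, on the other hand, not a real issue for $\kappa=4$: the harmonic correction is bounded by $\lambda=\pi/2$ throughout, as the paper notes after Theorem \ref{thmCL}.
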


By using proposition \ref{MinHff}, we have a straightforward corollary:

\begin{corollary}
Almost surely the quantum Hausdorff dimension of the zero level line drawn up to some finite stopping time is bounded from above by $\frac{3}{4+\gamma^2}$ and hence the usual KPZ relation is not satisfied for quantum Hausdorff dimension.
\end{corollary}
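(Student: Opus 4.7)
The plan is to deduce the corollary as an immediate combination of Proposition \ref{LevL} with Proposition \ref{MinHff}, followed by a short numerical check that the bound $\tfrac{3}{4+\gamma^2}$ strictly lies below the KPZ prediction.

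First I would invoke Proposition \ref{MinHff}. The zero level line is coupled to the underlying GFF via Theorem \ref{thmCL} and is therefore field-dependent; crucially, Proposition \ref{MinHff} was established precisely for arbitrary random sets coupled with the Liouville field, so the coupling is not an issue. Combining it with the Minkowski bound of Proposition \ref{LevL} yields $q_H \leq q_{M,E} \leq \tfrac{3}{4+\gamma^2}$ almost surely, which is the almost sure upper bound asserted in the corollary.

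The second step is to verify that this bound strictly contradicts the usual KPZ relation. A direct substitution gives $(2+\gamma^2/2)\cdot \tfrac{3}{4+\gamma^2} = \tfrac{3}{2}$, which already matches the Euclidean Hausdorff dimension of the $\mathrm{SLE}_4$ curve. Since the full KPZ formula additionally subtracts the strictly positive quadratic correction $\gamma^2 q^2/2$, and since $q \mapsto (2+\gamma^2/2)q - \gamma^2 q^2/2$ is strictly increasing on $[0,1]$ for $\gamma < 2$, the KPZ-predicted value $q_H^{\mathrm{KPZ}}$ solving $\tfrac{3}{2} = (2+\gamma^2/2)q - \gamma^2 q^2/2$ must satisfy $q_H^{\mathrm{KPZ}} > \tfrac{3}{4+\gamma^2}$ as soon as $\gamma > 0$. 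Hence the KPZ relation is genuinely violated for every non-trivial $\gamma$; at $\gamma = 0$ the two sides match, which is sensible since the Liouville measure degenerates to Lebesgue.

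There is no real obstacle in the proof of the corollary itself; the content is located entirely in Proposition \ref{LevL}. What is worth observing in the write-up is that our bound \emph{saturates the linear part} of the KPZ formula and kills the quadratic correction altogether. This matches the heuristic recalled in the introduction: the $\pm\lambda$ boundary conditions imposed along the slit keep the GFF bounded (rather than multifractally "thick") near the zero level line, so the exponential-moment enhancement normally responsible for the quadratic KPZ term is simply absent on this set.
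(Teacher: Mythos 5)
Your proposal is correct and follows exactly the paper's route: the corollary is obtained by combining Proposition \ref{MinHff} (valid for field-dependent sets) with the bound $q_{M,E} \leq \frac{3}{4+\gamma^2}$ of Proposition \ref{LevL}. Your explicit check that $(2+\gamma^2/2)\cdot\frac{3}{4+\gamma^2} = \frac{3}{2}$, so that the bound kills precisely the quadratic KPZ correction and hence lies strictly below the KPZ-predicted dimension for $\gamma>0$, is a worthwhile clarification that the paper leaves implicit.
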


\begin{remark}
In fact, this proposition can also be seen as a straightforward corollary of the later work on flow lines by setting $\kappa = 4$. In fact, we then also confirm that the expected Minkowski dimension of the zero level line is equal to $q = \frac{3}{4+\gamma^2}$. However, the proof here is much shorter and simpler in spirit. The underlying intuition is that near the zero level line the field is lower and this intuition can be nicely expressed with rigour.
\end{remark}

We start with a key lemma that replaced the usual scaling lemma and gives the scaling of quantum balls around points on the zero level line:

\begin{lemma}\label{LL1}
Fix a zero level line drawn up to some finite stopping time. Let $S$ be a dyadic square of side-length $l(S)$ intersecting the zero level line. Denote by $h$ the Gaussian free field in this split domain and by $\mu$ the corresponding Liouville measure with $\gamma < 2$. Then we have that $\EE_h\left(\mu_h(S)\right) \lesssim l(S)^{2+\gamma^2/2}$
\end{lemma}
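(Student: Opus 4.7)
The plan is to condition on the level line and exploit the two key features of the zero level line coupling: the slit-domain GFF picks up a \emph{uniformly bounded} harmonic correction, and the slit domain has the level line as part of its boundary, forcing the conformal radius to be small throughout $S$.

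First, I would invoke Theorem \ref{thmCL} to decompose $h = h^0_{\mathrm{slit}} + \phi$, where $h^0_{\mathrm{slit}}$ is a zero-boundary GFF in the slit domain $D_{\mathrm{slit}} = \HH \setminus \eta[0,T]$, and $\phi$ is the harmonic extension of the boundary data (which equals $\pm \lambda$ on the respective sides of the curve and on the real axis). Crucially, as noted in the remark following Theorem \ref{thmCL}, this $\phi$ is uniformly bounded, say $|\phi| \leq \lambda$. Hence $e^{\gamma \phi} \lesssim 1$ pointwise with constants depending only on $\kappa = 4$ and $\gamma$.

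Next, I would compute the expected mass using the circle-average regularization. Since the circle average of $h^0_{\mathrm{slit}}$ has variance $G^{\mathrm{slit}}_\delta(z,z) = \log(1/\delta) + \log \CR(z, D_{\mathrm{slit}}) + O(1)$ (this is where the Remark in Section 2.2 on the kernel $G_\delta$ enters), Fubini and the Gaussian moment identity give
\[
\EE_h \mu_\delta(S) = \int_S \delta^{\gamma^2/2} e^{\gamma \phi_\delta(z) + (\gamma^2/2) \operatorname{Var}(h^0_{\mathrm{slit},\delta}(z))} dz \;\lesssim\; \int_S \CR(z, D_{\mathrm{slit}})^{\gamma^2/2} dz,
\]
uniformly in $\delta$. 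Passing to the limit (the integrand is monotone for dyadic $\delta$ after exponential martingale manipulations, or by standard uniform integrability of the Liouville measure away from the boundary) yields
\[
\EE_h \mu_h(S) \;\lesssim\; \int_S \CR(z, D_{\mathrm{slit}})^{\gamma^2/2} dz.
\]

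Finally, I would use the geometric input: since $S$ intersects the level line, every $z \in S$ lies within distance at most $\sqrt{2}\, l(S)$ of $\eta[0,T] \subset \partial D_{\mathrm{slit}}$. By the Koebe $1/4$ theorem, $\CR(z, D_{\mathrm{slit}}) \leq 4 \operatorname{dist}(z, \partial D_{\mathrm{slit}}) \lesssim l(S)$. Integrating over $S$ of area $l(S)^2$ then gives
\[
\EE_h \mu_h(S) \;\lesssim\; l(S)^2 \cdot l(S)^{\gamma^2/2} \;=\; l(S)^{2+\gamma^2/2}.
\]

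The main point where care is needed is the limit passage from $\mu_\delta(S)$ to $\mu(S)$ in a slit domain whose boundary is a rough SLE$_4$ curve; one must check the circle-average calculation does not blow up there. However, since $S$ together with its neighbourhood of size $l(S)$ remains at distance at least $O(l(S))$ from any other part of $\partial \HH$ (or one can just restrict to $\delta$ much smaller than $l(S)$ and truncate near $\partial S$), the scaling argument is robust. The real conceptual content, and the reason the KPZ relation fails, is compactly encoded in the two inputs above: the boundary values of $\phi$ are bounded (so the field's mean does not blow up near the curve), and $\CR(z, D_{\mathrm{slit}})$ is forced to be small precisely because the level line is a boundary piece. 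Together these produce an extra factor $l(S)^{\gamma^2/2}$ on top of the Euclidean area scaling.
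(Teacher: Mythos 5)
Your overall plan — decompose $h$ into a zero-boundary slit-domain field plus a bounded harmonic correction, take expectations via circle averages, and use $\CR(z,D_{\mathrm{slit}}) \lesssim l(S)$ inside $S$ — is exactly the route the paper takes, and the final conclusion is right. However, there is a genuine gap in the intermediate step. The identification $\operatorname{Var}(h^0_{\mathrm{slit},\delta}(z)) = \log(1/\delta) + \log \CR(z, D_{\mathrm{slit}}) + O(1)$, and hence the resulting bound $\delta^{\gamma^2/2} e^{(\gamma^2/2)\operatorname{Var}} \lesssim \CR(z, D_{\mathrm{slit}})^{\gamma^2/2}$, only holds when $z$ is at distance $\gtrsim \delta$ from $\partial D_{\mathrm{slit}}$. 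For $z$ within distance $\delta$ of the curve, the regularized Green's function at the diagonal is only $O(1)$ (it cannot be negative), so the integrand is at least $\delta^{\gamma^2/2}$, which can be \emph{much larger} than $\CR(z,D_{\mathrm{slit}})^{\gamma^2/2}$ there. So the claim that $\EE_h \mu_\delta(S) \lesssim \int_S \CR(z, D_{\mathrm{slit}})^{\gamma^2/2}\, dz$ \emph{uniformly in $\delta$} is false. Moreover, your final paragraph locates the ``care needed'' in the wrong place: you worry about distance from $\partial \HH$ or $\partial S$, but the real issue is the SLE$_4$ curve running through the \emph{interior} of $S$, which cannot be truncated away.

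The paper fixes this by splitting the integral into the region at distance $\geq \delta_n$ from the curve and the $\delta_n$-neighbourhood of the curve. On the far region your argument is fine. On the near region, it instead uses monotonicity of Green's functions ($G_{H_t} \leq G_{\HH}$) to bound the integrand by the \emph{$\HH$} conformal radius, $\CR(z,\HH)^{\gamma^2/2} = O(1)$, and then invokes the fractal (Minkowski) dimension $3/2$ of SLE$_4$ to show the near-curve region has area $o(\delta_n^{1/3})$, so this part of the integral vanishes as $\delta_n \downarrow 0$. To patch your proof you would need to insert this split-and-bound step before passing to the limit; with it, Fatou's lemma (which suffices for the upper bound, in place of the monotonicity/uniform-integrability hand-wave) then gives $\EE_h \mu(S) \lesssim \int_S \CR(z,D_{\mathrm{slit}})^{\gamma^2/2}\, dz \lesssim l(S)^{2+\gamma^2/2}$ as you wanted.
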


\begin{proof}
As usual in working with the Liouville measure, it is cleaner to work with a regularized field. From theorem \ref{Liouville} we know $\delta_n = 2^{-n}$ regularized fields converge to the Liouville measure. Hence, we can write
\[\mu_h(S) = \lim_{\delta_n \downarrow 0} \mu_{h_{\delta_n}}(S) = \lim_{\delta_n \downarrow 0} \int_{S} \delta^{\gamma^2/2} e^{\gamma h_{\delta_n}(z)}dz \]
Recall from definitions preceeding \ref{Liouville} that the regularized field $h_{\delta_n}(z)$ is a Gaussian field, defined by taking circle averages of the GFF. It is defined nicely point-wise. Its mean is given by the bounded harmonic SLE-measurable correction term described in section 2.1, and the covariance kernel is described by the regularized Green's function of the slit domain.
\[G_{\delta_n}(x,y) = \log \frac{1}{\delta_n \vee \abs{x-y}} + \tilde{G}_{\delta_n}(x,y)\]
Here $\tilde{G}_{\delta_n}(x,y)$ is the harmonic extension of the function equal to $-\log \frac{1}{\delta_n \vee \abs{x-y}}$ when one of the points is on the boundary of the domain. Notice that if at least one of $x, y$ is of distance $\delta_n$ from the boundary, then $\tilde{G}_{\delta_n}(x,y) = \tilde{G}(x,y)$ where the latter is the harmonic correction term for the usual Green's function. This is useful, as we know that $\tilde{G}(x,x) = \CR(x,H_t)$ where the latter denotes the conformal radius of the point $x$ for the slit domain. 

Now we can write the GFF $h$ as a sum of a zero-boundary GFF $h^0$ and the bounded harmonic correction term $C_h$ that can be defined to be zero on the SLE (see discussion after the statement on theorem \ref{thmCL}. So proceed to write
\[\EE_h\left(\mu_h(S)\right) = \lim_{\delta_n \downarrow 0}\EE_{h}\left(\int_{S} \delta_n^{\gamma^2/2} e^{\gamma h^0_{\delta_n}(z)+\gamma C_h}dz\right)\]
Firstly notice that as the harmonic correction is uniformly bounded by a constant, it will only influence the expectation by a bounded constant and thus we can henceforth neglect the term $\gamma C_h$ by absorbing it in some multiplicative constant. Thus we want to bound
\[\EE_{h}\left(\int_{S} \delta_n^{\gamma^2/2} e^{\gamma h^0_{\delta_n}(z)}dz\right)\]
We will split the integral into two:
\begin{enumerate}
\item the part that is at least of distance $\delta_n$ off the curve
\item the curve together with its $\delta_n$ neighbourhood
\end{enumerate}
For the first part, start by taking the expectation inside the integral (everything is nicely bounded). Then using exponential moments for Gaussian random variables, we have the following estimate for the integrand:
\begin{equation}
\label{expBound}
\EE_h \delta_n^{\gamma^2/2} e^{\gamma h^0_{\delta_n}} \leq \CR(z,H_t)^{\gamma^2/2}
\end{equation}

Recall that the conformal radius satisfies $\CR(z,H_t) \asymp d(z,H_t)$ where $d(z,H_t)$ is the distance from the boundary. But $d(z,H_t) \leq l(S)$ and hence we get a bound of $O(l(S))^{\gamma^2/2}$. Thus integrating over the whole square (minus the $\delta_n$ neighbourhood) we get a contribution of $O(l(S)^{2+\gamma^2/2})$.

Now we treat the part near the curve. We could use Kahane convexity inequalities \cite{KAHANE} or a global argument as in \ref{FLZ}. However, it follows also elementarily by using bare hands. Start again by taking the expectation inside the integral. Then we need to bound the variance of $h^0_{\delta_n}(z)$. By the definition of the GFF in $H_t$ it is given by integrating 
\[\int_{H_t \times H_t}G_{\delta_n}(x,y)\rho^z_{\delta_n}(x)\rho^z_{\delta_n}(y)dxdy\]
where by $\rho^z_{\delta_n}$ we denote the distribution giving unit mass to the circle of radius $\delta_n$ around the point $z$. 

But $G(x,y) \geq G_{\delta_n}(x,y)$ and hence the variance is bounded by 
\[\int_{\HH \times \HH}G(x,y)\rho^z_{\delta_n}(x)\rho^z_{\delta_n}(y)dxdy\]
i.e. by that of the $\delta_n$ regularized GFF in $\HH$. But this we can calculate as above to get 
\[\EE_h \delta_n^{\gamma^2/2} e^{\gamma h^0_{\delta_n}} \leq \CR(z,\HH)^{\gamma^2/2}\]
In particular, integrating over the $\delta_n$ neighbourhood of the curve, as the Hausdorff dimension of SLE$_4$ is $3/2$ \cite{BEF}, we may bound this part with $o(\delta_n^{1/3})$

Thus 
\[\EE_h\left(\int_{S} \delta_n^{\gamma^2/2} e^{\gamma h_{\delta_n}(z)}dz\right) \lesssim (l(S))^{2+\gamma^2/2} + o(\delta_n^{1/3})\]
and letting finally $\delta_n \downarrow 0$, we get
\[\EE_h\left(\mu_h(S)\right) \lesssim l(S)^{2+\gamma^2/2}\]

\end{proof}
Now we are ready to attack the proposition:
\begin{proof}[Proof of proposition]

We will sample the GFF as above: we first sample an $SLE_4$ up to some finite stopping time, then the field in the slit domain with its bounded harmonic correction term. 

Now, we know that the Minkowski dimension of the $SLE_4$ curve is $3/2$ \cite{ROHDESCHRAMM,BEF}. Thus for any $\delta > 0$ we can cover it with $O(r^{-3/2-\delta})$ dyadic squares $S_i, i \in \mathcal{I}$ of radius $r=2^{-n}$. Fix $q < 1$ to be defined later. 

By linearity of expectation we can write
\[\EE_h M_q^Q(A,r) = \EE_h \left(\sum_{i \in \mathcal{I}} \mu_h(S_i)^q\right) = \sum_i \EE_h\left(\mu_h(S_i)^q\right)\]
Now by lemma \ref{LL1}, $\mu_h(S_i)$ is an integrable random variable with respect to the randomness of the GFF $h$. Hence as $q \leq 1$, we can use Jensen's inequality for the concave function $x^q$ to get 
\[\EE_h \left(\mu_h(S_i)^q\right) \leq \left(\EE_h \mu_h(S_i)\right)^q\]
But using lemma \ref{LL1} again, we have for any ball $S_i$
\[\left(\EE_h \mu_h(S_i)\right)^q \lesssim r^{q(2+\gamma^2/2)}\]
and so 
\[\EE_h \left(\sum_{i \in \mathcal{I}}\mu_h(S_i)^q\right) \lesssim r^{-3/2 - \delta + q(2+\gamma^2/2)} \]
Choosing $q = (1+\delta)\frac{3}{4+\gamma^2}$ we thus have 
\[\EE_h M_q^Q(A,r) \lesssim r^{\delta/2}\]
It follows that $q_M \leq q$ and by letting $\delta \downarrow 0$, we see that $q_M \leq \frac{3}{4+\gamma^2}$. 
\end{proof}

\section{Winding of SLE$_\kappa$}
In this section we find the exponential moments for the winding of chordal SLE curves conditioned to pass nearby a fixed point. 

\subsection{Introduction and results}
The winding we study in this section is in exact correspondence with the additional correction term in the flow line coupling of theorem \ref{thmC}.

\begin{definition}\label{defW}
Consider a chordal SLE$_\kappa$ in the upper half plane. The winding $w(z)$ around a point $z$ up to time $T$ is given by the argument of $f_T'(z)$. For $4 < \kappa < 8$, we need to take $\lim_{t\uparrow T'} f_t(z)$ with $T' = T \wedge \tau(z)$, where $\tau(z)$ is the first time $z$ is separated from the infinity by the SLE curve. If we do not mention the time $T$, we consider the entire SLE curve. 
\end{definition}

Notice that as $\arg g'(z)$ is the imaginary part of an analytic function $\log g'(z)$, it is a harmonic function off the curve itself. We fix the logarithm by requiring it to be continuous in the slit domain and tend to $0$ at infinity \cite{ZIPPER}. The basic intuition behind winding is that whereas $\abs{g'(z)}$ measures the distortion of the length under $g$, then $\arg (g'(z))$ measures the angular distortion. Very near the curve, this distortion is given by unwinding the SLE curve back to zero. One can also think that this definition of winding gives the amount that a curve from the infinity needs to wind to access the point $z$. Asymptotically near the curve, this version of winding should coincide with the geometric winding up to some bounded constants \cite{WIND}.

The coupling of GFF and SLE gives the average winding of SLE over the randomness of the SLE. Here, we prove the following more precise result, calculating the winding around any point depending on its distance to the SLE curve. Recall that we are working with the chordal SLE in the upper half plane. 

\begin{theorem}\label{thmW}

Let $\CR_0$ be the conformal radius of a fixed point $z_0$ in the upper half plane. Fix $0 <  \kappa < 8$ and let $\tau$ be the time that SLE first cuts $z_0$ from infinity. Denote by $H_\tau$ the SLE slit domain component containing $z_0$. Then, for $\epsilon > 0$ sufficiently small, conditioned on $\CR(z_0,H_\tau) \in [\epsilon, C\epsilon]$ with $C > 1$, the exponential moments of the winding $w(z_0)$ around the point $z_0$ are given by $e^{\lambda w(z_0)} \asymp \epsilon^{-\lambda^2\kappa/8}$, where the implied constants depend on $\kappa, \lambda$ and for fixed $\kappa$ can be chosen uniform for $\abs{\lambda} < \lambda_0$ for any choice of $\lambda_0 > 0$.

\end{theorem}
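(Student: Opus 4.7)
The plan is to reduce the winding question to a spectral analysis of a one-dimensional diffusion derived from the Loewner equation, in the spirit of \cite{BEF,LSW,WSS} but with an added exponential potential. Writing $f_t(z_0)=X_t+iY_t$ and applying It\^o's formula to the Loewner equation, one derives SDEs for $X_t$, $Y_t$, and $\log f_t'(z_0)$; in particular $dw_t = 4X_tY_t/(X_t^2+Y_t^2)^2\,dt$ and $\CR(z_0,H_t) = 2Y_t/|f_t'(z_0)|$. Setting $V_t = X_t/Y_t = \cot\theta_t$ with $\theta_t=\arg f_t(z_0)$, and composing the time changes $d\tilde t = dt/Y_t^2$ followed by $ds = 4(1+V^2)^{-2}\,d\tilde t$ (so that $s$ tracks $\log(\CR_0/\CR(z_0,H_t))$), the angle becomes an autonomous diffusion on $(0,\pi)$,
\[
d\theta_s = \frac{\kappa-4}{4}\cot\theta_s\,ds + \frac{\sqrt{\kappa}}{2}\,dB_s,
\]
the winding simplifies to $w_s = \int_0^s \cot\theta_r\,dr$, and the conditioning event $\CR(z_0,H_\tau)\in[\epsilon,C\epsilon]$ becomes the event that $\theta$ has not exited $(0,\pi)$ before an $s$-time $s_\epsilon = \log(1/\epsilon)+O(1)$.

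Given this reduction, I compute the exponential moments via Feynman--Kac: with $\sigma$ the first exit time of $\theta_s$ from $(0,\pi)$, the function
\[
u_\lambda(\theta,s) := \EE^\theta\bigl[e^{\lambda\int_0^s \cot\theta_r\,dr}\II_{\{\sigma>s\}}\bigr]
\]
solves $\partial_s u = (\mathcal{L}+\lambda\cot\theta)u$, where $\mathcal{L} = (\kappa/8)\partial_\theta^2 + ((\kappa-4)/4)\cot\theta\,\partial_\theta$ is the generator of $\theta_s$. I try the explicit ansatz $\phi_\lambda(\theta) = (\sin\theta)^{(8-\kappa)/\kappa}e^{-\lambda\theta}$: the exponent $(8-\kappa)/\kappa$ is forced by cancelling the $\cot^2\theta$ terms in $(\mathcal{L}\phi_\lambda)/\phi_\lambda$, and $-\lambda$ is then forced by cancelling the $\cot\theta$ terms. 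A direct computation yields
\[
(\mathcal{L}+\lambda\cot\theta)\phi_\lambda = \alpha(\lambda)\phi_\lambda, \qquad \alpha(\lambda) = \frac{\kappa\lambda^2 + \kappa - 8}{8}.
\]
Note that $\alpha(0) = (\kappa-8)/8$ matches the classical one-arm exponent $\PP[\CR(z_0,H_\tau)\le\epsilon]\asymp\epsilon^{(8-\kappa)/8}$, giving a sanity check on the computation.

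To promote this formal eigenpair identity to sharp asymptotics I verify that $\alpha(\lambda)$ is the principal eigenvalue. The operator $\mathcal{L}+\lambda\cot\theta$ is self-adjoint on $L^2((0,\pi),w\,d\theta)$ with weight $w(\theta) = (\sin\theta)^{2(\kappa-4)/\kappa}$, has compact resolvent, and, since $\phi_\lambda>0$, must admit $\phi_\lambda$ as its ground state. Performing the ground-state decomposition $u_\lambda = \phi_\lambda v$, the function $v$ satisfies a parabolic equation whose spatial operator is the generator of the Doob-transformed diffusion
\[
d\tilde\theta_s = \Bigl(\cot\tilde\theta_s - \frac{\kappa\lambda}{4}\Bigr)ds + \frac{\sqrt{\kappa}}{2}\,d\tilde B_s,
\]
which is positive recurrent on $(0,\pi)$ because the $\cot$ drift repels both endpoints. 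Hence $v(\theta,s)\to c(\lambda,\theta)>0$ exponentially in $s$, yielding $u_\lambda(\theta_0,s) \asymp \phi_\lambda(\theta_0)\,e^{\alpha(\lambda)s}$. Evaluating at $s\approx\log(1/\epsilon)$ and dividing the $\lambda$-case by the $\lambda=0$ case gives the claimed $\epsilon^{-(\alpha(\lambda)-\alpha(0))} = \epsilon^{-\lambda^2\kappa/8}$.

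The hardest step will be this sharp spectral analysis. Because the potential $\lambda\cot\theta$ is singular at both endpoints, the self-adjointness, boundary classification, and spectral gap of $\mathcal{L}+\lambda\cot\theta$ all need careful justification, and in particular the exponential convergence of $v$ must be established with constants uniform for $|\lambda|<\lambda_0$. A secondary subtlety is converting the sharp fixed-time bound into the stated conditional expectation: one needs to verify that on the event $\CR(z_0,H_\tau)\in[\epsilon,C\epsilon]$ the random $s$-time $s_\epsilon$ is genuinely concentrated in an $O(1)$ neighborhood of $\log(1/\epsilon)$, and that the winding accrued during any final $O(1)$ $s$-interval contributes only a bounded multiplicative factor.
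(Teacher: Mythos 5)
Your reduction to the diffusion $\theta_s$ on $(0,\pi)$ (equivalently the paper's $\alpha_s=2\theta_s$ on $(0,2\pi)$), the formula $w=\int_0^\tau\cot\theta_s\,ds$, and the identification of the conditioning event with an exit-time window of width $O(1)$ are all correct and coincide with the paper's Lemma on the diffusion. Where you diverge is in the engine that produces the exponent: you attack the Feynman--Kac semigroup of the \emph{perturbed} Schr\"odinger operator $\mathcal{L}+\lambda\cot\theta$ and exhibit the explicit ground state $\phi_\lambda(\theta)=(\sin\theta)^{(8-\kappa)/\kappa}e^{-\lambda\theta}$ with eigenvalue $\alpha(\lambda)=(\kappa\lambda^2+\kappa-8)/8$ (the computation checks out, and so does the Doob-transformed SDE $d\tilde\theta=(\cot\tilde\theta-\kappa\lambda/4)ds+(\sqrt\kappa/2)dB$). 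This is genuinely different from the paper, which only develops spectral theory for the \emph{unperturbed} generator, Doob-transforms by the $\lambda=0$ ground state to the everlasting-survival process, and then extracts the Gaussian of variance $\kappa T/4$ \emph{probabilistically} by writing $\int\cot\tfrac{\alpha_s}{2}\,ds=-\tfrac{\sqrt\kappa}{2}B_T+O(1)$ along the Brownian driver, having established absolute continuity of the exit-time-conditioned process with the everlasting-survival process on $[0,T-10]$. Your route is conceptually cleaner and finds the Gaussian variance directly in $\alpha(\lambda)-\alpha(0)=\kappa\lambda^2/8$; its price is that the singular potential $\lambda\cot\theta$ (blowing up to $\pm\infty$ at the two endpoints) makes self-adjointness, spectral gap, and uniformity in $\lambda$ for the perturbed operator nontrivial, whereas the paper's eigenfunction-boundary-growth lemmas only concern the unperturbed, positivity-preserving operator.

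The genuine gap, however, is what you call the ``secondary subtlety.'' It is not secondary; in the paper it occupies roughly half of Section~5. Your $u_\lambda(\theta,s)=\EE^\theta\bigl[e^{\lambda\int_0^s\cot\theta_r dr}\II_{\{\sigma>s\}}\bigr]$ deliberately cuts off \emph{before} the exit, while the quantity in the theorem is $\EE\bigl[e^{\lambda\int_0^\tau\cot\theta_r dr}\,\big|\,\tau\in[T,T+c]\bigr]$ with the integral run all the way to the random exit time $\tau$ and the conditioning forcing the process to exit during a short final window. Near the exit $\theta$ approaches $0$ or $\pi$, where $\cot$ diverges, so $\int_{T}^{\tau}\cot\theta_r\,dr$ has unbounded integrand exactly when it matters; bounding its conditional exponential moment uniformly over the state at time $T$ (and uniformly for $|\lambda|<\lambda_0$) is not automatic. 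The paper handles this with separate probabilistic arguments: for $\kappa\neq 4$ via the exponential martingale and optional stopping together with a density estimate for the position at time $T-9$, and for $\kappa=4$ via a reduction to Bessel-3 bridges whose inverse has all exponential moments. Nothing in your proposal substitutes for that analysis, and the eigenfunction $\phi_\lambda$ and spectral gap alone do not control a stopped integral up to an absorption time. You should either adapt the paper's tail estimates, or find an a priori bound showing $\EE\bigl[e^{2\lambda\int_T^\tau\cot\theta_r dr}\mid\mathcal{F}_T,\;\tau\le T+c\bigr]=O(1)$; without one of these, the proof is incomplete.
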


\begin{remark}
We have defined the winding in the upper half plane and also stated the theorem in there. However, as defining the chordal SLE in a different nice (for example rectifiable, smooth Jordan boundary) domain would involve conjugations by analytic maps that extend to the boundary and have non-zero derivative on the boundary almost everywhere, the winding in any other such domain will be the same up to a uniformly bounded additive error. Hence, as we determine exponential moments up to multiplicative constants, the theorem \ref{thmW} holds also for the chordal SLE in all nice domains and in particular in the unit disc.
\end{remark}

\begin{remark}
By following the proof carefully, we actually get slightly more: we get that the winding is given by a Gaussian of variance $-\frac{\kappa}{4}\log {\epsilon}$ plus different error terms. The dependence relations between these error terms are a bit delicate and that is also the reason why we chose the wording above, which, needless to say, is sufficient for our applications. For $\kappa = 8$ the curve is space filling and winding should actually give the Gaussian free field, seen as a "height function".
\end{remark}

\paragraph{Comparision to Schramm's study on winding}\mbox{}\\

In this paragraph, we will shortly discuss how this result relates to Schramm's work on winding in his seminal paper \cite{SCHRAMM}. First, Schramm actually studied the winding of radial SLE around its endpoint zero and the variance was approximated by a Gaussian of variance $-\kappa\log \epsilon$, when the tip was $\epsilon$-close to zero. However, in our case we have a Gaussian of variance $-\kappa/4 \log \epsilon$. This seems to be in agreement with predictions by Duplantier (see e.g. \cite{DUPN}, ch. 8), where radial SLE ought to correspond to a one-arm event and chordal SLE conditioned to be close to a point - we think - could correspond to a two-arm event. Intuitively for $\kappa$ small, one could argue that in the chordal case you just pass from one or other side of the point, whereas in the radial case you might still do a turn before finally hitting zero, thus causing a difference in variances.

Also, one needs to remark that notions of winding in \cite{SCHRAMM} and here differ. Schramm is really looking at the geometric winding number around zero, this is given by the argument of the tip of the curve, when the argument is chosen to be continuous along this curve. We, however, use the definition of \cite{ZIPPER} that gives the GFF-SLE couplings above. As explained above and as used in physics literature \cite{WIND}, these two notions should asymptotically agree up to bounded additive errors. What we can confirm is that indeed a few line of calculations show that in the radial case around point zero, the concept used here would give a Gaussian of variance $-\kappa \log \epsilon$, in agreement with Schramm's result. 

Finally, there is the question whether Schramm's nice geometric approach could have helped the technical work to follow. It does not seem to be the case, as his method in some sense only helps to relate the winding of the curve to the behaviour of the driving process. Due to conditioning, in our case the work is actually in studying the behaviour of the driving process resulting from conditioning.

\subsection{Proof of the theorem}

To start attacking the theorem, we need a lemma to translate the question that of diffusion processes and rewrite the geometric conditioning of SLE curves in terms exit times of a certain diffusion process:

\begin{lemma}\label{lemB}
Consider the chordal SLE$_\kappa$ in the upper half plane with $0 < \kappa < 8$ and set $\CR_0 = CR(z_0,\mathbb{H})$. Then, the conformal radius $CR(z_0,H_\tau)$ is equal to $\CR_0 e^{-\tau}$, where $\tau$ is the first time when the SLE curve cuts $z_0$ from the infinity, and also the first exit time of a diffusion $\alpha_s$ in $(0, 2\pi)$ satisfying the following equation:
\begin{equation}\label{eqD}
d\alpha_s = \sqrt{\kappa}dB_s + \frac{\kappa - 4}{2}\cot \frac{\alpha_s}{2}ds
\end{equation}
Moreover, the winding around $z_0$ is given by $w(z_0) = \int_0^\tau \cot \frac{\alpha_s}{2}$.
\end{lemma}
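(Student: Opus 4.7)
The plan is to apply Itô's formula to the Loewner flow evaluated at $z_0$ and then perform a time change that linearizes $\log \CR(z_0, H_t)$. I write $f_t(z_0) = X_t + iY_t = r_t e^{i\theta_t}$ with $\theta_t \in (0,\pi)$; the process $\alpha_s$ of the lemma will be $2\theta_{t(s)}$ for the time change $t(s)$ defined below. From $\partial_t f_t(z_0) = 2/f_t(z_0)$ and $d\zeta_t = \sqrt{\kappa}\,dB_t$, Itô gives
\[df_t(z_0) = \frac{2}{f_t(z_0)}\,dt - \sqrt{\kappa}\,dB_t,\]
and the polar decomposition produces an SDE for $\theta_t$ whose drift scales as $1/r_t^2$ and diffusion as $1/r_t$. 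Differentiating the Loewner equation in space yields $d\log f_t'(z_0) = -2/f_t(z_0)^2\,dt$, and combining its real part with the identity $\CR(z_0, H_t) = 2Y_t/|f_t'(z_0)|$ produces the clean deterministic evolution
\[d\log \CR(z_0, H_t) = -\frac{4\sin^2\theta_t}{r_t^2}\,dt,\]
which dictates the natural time change $ds = (4\sin^2\theta_t/r_t^2)\,dt$.

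Under this time change $\CR(z_0, H_{t(s)}) = \CR_0 e^{-s}$ holds by construction, giving the first assertion of the lemma. Substituting $dt = r_t^2/(4\sin^2\theta_t)\,ds$ into the polar SDE for $\theta_t$, all the $r_t$ and $\sin\theta_t$ prefactors cancel and the equation becomes autonomous:
\[d\theta_s = \tfrac{\kappa-4}{4}\cot\theta_s\,ds + \tfrac{\sqrt{\kappa}}{2}\,d\widetilde B_s,\]
for a new Brownian motion $\widetilde B$. Doubling via $\alpha_s = 2\theta_s$ rescales the state space to $(0,2\pi)$ and produces equation \eqref{eqD} exactly. The SLE time $\tau$ at which $z_0$ is cut from infinity corresponds precisely to $\theta_t$ first hitting $\{0,\pi\}$, equivalently to $\alpha_s$ first exiting $(0,2\pi)$, so the second assertion follows; for $0<\kappa<8$ a standard Bessel-type comparison at the endpoints guarantees this exit is almost surely finite whenever we need it.

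The winding then falls out for free. The imaginary part of $d\log f_t'(z_0)$ is $(2\sin 2\theta_t/r_t^2)\,dt$, and the same time change reduces this to $\cot\theta_s\,ds = \cot(\alpha_s/2)\,ds$, giving $w(z_0) = \int_0^\tau \cot(\alpha_s/2)\,ds$. The whole argument is essentially Itô bookkeeping; the one genuine insight I expect to need is choosing $-\log\CR$ as the clock, and the pleasant miracle is that all $r_t$-dependence then disappears from both the $\theta$-equation and from the winding integrand, leaving the autonomous diffusion on which the rest of Section~5 is built. Beyond this I do not foresee a serious obstacle, modulo the standard verification that the time change is a valid bijection up to $\tau$.
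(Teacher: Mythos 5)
Your argument is correct and is essentially the paper's own, both following the first-moment computation of Beffara \cite{BEF}: the only cosmetic difference is that you work directly in polar coordinates $f_t(z_0)=r_t e^{i\theta_t}$ for the normalized half-plane map, while the paper first conjugates by the M\"obius map sending $z_0\mapsto 0$ and $\infty\mapsto 1$ into $\mathbb{D}$. These are related by $\tilde\beta_t=e^{2i\theta_t}$ (so your $\alpha_s=2\theta_s$ is precisely the paper's $\alpha_s$), your clock $ds=4\sin^2\theta_t\,r_t^{-2}\,dt$ is identical to the paper's $ds=(\tilde\beta_t-1)^4|g_t(z_0)-\overline{g_t(z_0)}|^{-2}\tilde\beta_t^{-2}\,dt$, and the resulting SDE and winding integrand agree term for term.
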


\begin{remark}
This lemma stems from the first moment argument in \cite{BEF}. The basic strategy is the following: we transform our chordal SLE in $\mathbb{H}$ to a process in $\mathbb{D}$ for which the image of $z_0$ is fixed to the origin, then pick a convenient time change, and study the process induced for the driving Brownian motion. We only need slightest adjustments, but for the convenience of the reader, the proof is still provided in the appendix. Notice that in case of $\kappa > 4$ the exit time of the diffusion corresponds to the first time when the point $z_0$ is separated by the curve from the infinity.
\end{remark}

\begin{proof}[Proof of the Theorem \ref{thmW}]

From lemma \ref{lemB}, we see that conditioning on 
\[CR(z_0,H_\tau) \in [\epsilon, C\epsilon]\]
is equivalent on conditioning the corresponding diffusion to exit $(0,2\pi)$ during the time interval 
\[(\log \CR_0 + \log \frac{1}{\epsilon} - \log C, \log \CR_0 + \log \frac{1}{\epsilon}]\] 
Recall that $\tau$ is also the first exit time for the diffusion and set $T = \log \frac{1}{\epsilon} + \log \CR_0 - \log C$. Then it remains to show that conditioned on $\tau \in [T,T+c]$, we have 
\[\EE\left(\exp(\lambda \int_0^\tau \cot \frac{\alpha_s}{2})\right) \asymp e^{T\lambda\kappa/8}\]
with uniform constants for $\abs{\lambda} < \lambda_0$ for any choice of of $\lambda_0 > 0$.

We will do this in several steps: first, the main term of the theorem comes from the conditioned diffusion up to time $T-10$. By gaining control on eigenfunction expansions of survival probability, we show that this part is more or less stationary and absolutely continuous with respect to the process conditioned to everlasting survival. Thereafter, we have to control the rest. As the behaviour of the diffusion starts to change and we need to opt a different strategy. The more dangerous part is the very end and we want to handle it (for $\kappa \neq 4$) independently of the main term, thus we introduce yet another subdivision at time $T-9$. These error terms are then controlled using probabilistic arguments. 

\begin{center}
    \includegraphics[width = 0.8\textwidth, keepaspectratio]{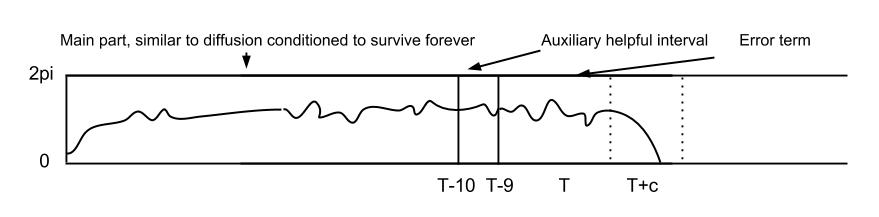}
    \end{center}

\subsubsection{Boundary growth of eigenfunctions for the Green operator} 
For the main part, the key is obtaining good estimates of the survival probability of the diffusion. To do this, we gain control over the eigenfunction expansion of a related differential operator.

Although inside the interval everything about our diffusion \eqref{eqD} is nice and smooth, we have to be cautious because the drift term becomes singular at both ends of the interval. Recall that when one considers one-dimensional diffusions on its natural scale - basically turning it into a martingale - then the speed measure represents the time-change with respect to a standard Brownian motion. In our case this speed measure is seen to be 

\[m(dx) = \sin^{2-\frac{8}{\kappa}}\frac{x}{2}dx\]

which is integrable over the interval $[0, 2\pi]$ only for $\kappa > 8/3$. Still everything will work out nicely. 

The first step is to find the Green's function corresponding to the diffusion process. This can be done either purely analytically \cite{Yosida}, or probabilistically by first finding the Green's function on natural scale of the diffusion and then transforming back to the initial setting \cite{BASS}.

As a result we see that the Green's function is given by
\begin{equation}\label{GF}
G(x,y) = \left\{ \begin{array}{rcl}
\frac{s(x)(s(2\pi) - s(y))}{s(2\pi)} & \mbox{for}
& x \leq y \\ \frac{s(y)(s(2\pi) - s(x))}{s(2\pi)} & \mbox{for} & x>y
\end{array}\right.
\end{equation}
where $s(x)$ is a scale function of the diffusion given by 
\[s(x) = \int_0^x \sin^{\frac{8}{\kappa}-2}\frac{u}{2}du\]

Now consider the corresponding integral operator:
\[Gf(x) = \int G(x,y)f(y)m(dy)\]
on $L^2(I,m(dx))$. A direct calculation shows that this satisfies the conditions of a Hilbert-Schmidt integral operator, i.e. its $L^2[(I,m(dx))\times(I,m(dx))]$ norm of the kernel is finite. Thus using Hilbert-Schmidt expansion theorem and Krein-Rutman theory \cite{BASS}, we have a complete orthonormal system of eigenfunctions $\phi_i(x)$ and corresponding growing eigenvalues $\lambda_i$ such that $0 < \lambda_0 < \lambda_1 \leq \lambda_3 \leq ... < \infty$. We remark that this would also follow by just considering the corresponding Sturm-Liouville problem: even though the problem is not regular at endpoints, the expansion still applies.

Notice also that as the corresponding diffusion (or its generator) has $C^2$ regularity inside any compact interval of $(0,2\pi)$, the eigenfunctions are also at least $C^2$ in these respective intervals. Moreover, by writing out the eigenfunction expansion for the Green's function itself and using Bessel inequality, we see that eigenvalues don't grow too hastily:

\begin{equation}
\label{EVG}
\sum_{i=0}\lambda_i^{-2} < \infty
\end{equation}
Next we would like to get a good control on individual eigenfunctions also near the boundary. An explicit calculation shows that $\lambda_0 = 1-\frac{\kappa}{8}$ and up to a normalization constant $\phi_0(x)$ is equal to $\sin^{\frac{8}{\kappa}-1}\frac{x}{2}$. In fact we will set $\phi_0(x) = \sin^{\frac{8}{\kappa}-1}\frac{x}{2}$ to ease some subsequent calculations.

For other eigenfunctions, we need some more work. As a first step we can use Cauchy-Schwartz on $\phi_i(x) = \lambda_i G\phi_i(x)$, to obtain
\begin{equation}\label{GR}
\begin{split}
\abs{\frac{1}{\lambda_i}\phi_i(x)} = \abs{G\phi_i(x)} &= \abs{\int G(x,y)\phi_i(y)m(dy)} \\
&\leq (\int G^2(x,y)m(dy))^{1/2}(\int \phi_i(y)^2m(dy))^{1/2} \\ 
&\lesssim 1
\end{split}
\end{equation}
or in other words $\phi_i(x) \lesssim \lambda_i$, where the implied constant does not depend on $i$.

However, this is not yet enough for our purposes. We need to show that the boundary growth of other eigenfunctions is at least of the same order than that of the first eigenfunction $\phi_0$. Thus we define for all $i \in \NN$
\[g_i(z) = \frac{\phi_i(x)}{\phi_0(x)}\]
and study its behaviour. We prove two lemmas about $g(z)$. First we show that all eigenfunctions scale similarly near the boundary or in other words:

\begin{lemma}\label{Cl1}
For all $i \in \NN$, we have 
\begin{equation*}
g_i(x) \lesssim \lambda_i^m
\end{equation*}
for some universal $m$.
\end{lemma}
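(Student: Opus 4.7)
My plan is to reduce the problem to classical sup-norm bounds for Jacobi polynomials via a Doob $h$-transform. Since $\phi_0>0$ on $(0,2\pi)$ and $L\phi_0 = -\lambda_0\phi_0$, the conjugated operator $\tilde{L}f := \phi_0^{-1}(L+\lambda_0)(\phi_0 f)$ is well-defined on smooth $f$, and a direct computation gives $\tilde{L}g_i = -(\lambda_i - \lambda_0)g_i$. Thus $g_i$ is an eigenfunction of $\tilde{L}$, and using $\|g_i\|_{L^2(\tilde{m})} = \|\phi_i\|_{L^2(m)} = 1$ with $\tilde{m}(dx) := \phi_0(x)^2 m(dx) = \sin^{8/\kappa}(x/2)\,dx$, the family $(g_i)$ is a complete orthonormal basis of $L^2((0,2\pi),\tilde{m})$, which is a \emph{finite} measure space for every $0<\kappa<8$ -- so even for small $\kappa$, where the original speed measure $m$ blows up at the boundary, the transformed problem is well-behaved.

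Substituting $\phi_0'/\phi_0 = \tfrac{8/\kappa-1}{2}\cot(x/2)$ into the formula for $\tilde{L}$ and simplifying yields
\[\tilde{L} = \tfrac{\kappa}{2}\,\partial_x^2 + 2\cot(x/2)\,\partial_x,\]
which is precisely the generator of the diffusion \eqref{eqD} conditioned to survive forever. The change of variable $y=\cos(x/2)\in(-1,1)$ turns this, via a routine chain-rule computation, into $\tfrac{\kappa}{8}\bigl[(1-y^2)\partial_y^2 - (1+\tfrac{8}{\kappa})\,y\,\partial_y\bigr]$, which up to the positive factor $\kappa/8$ is the symmetric Jacobi operator with parameter $\alpha := 4/\kappa - 1/2 > 0$. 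Its eigenfunctions are the Gegenbauer polynomials $P_n^{(\alpha,\alpha)}$, and its eigenvalues are $\tfrac{\kappa}{8}n(n+\tfrac{8}{\kappa}) = n + \tfrac{\kappa}{8}n^2$. Matching with $\tilde{L}g_i = -(\lambda_i - \lambda_0) g_i$ identifies $g_i(x) = c_i\,P_i^{(\alpha,\alpha)}(\cos(x/2))$ and confirms $\lambda_i = 1-\kappa/8 + i + i^2\kappa/8 \asymp i^2$.

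The lemma then follows from the classical bound $\max_{y\in[-1,1]}\bigl|P_n^{(\alpha,\alpha)}(y)\bigr| = P_n^{(\alpha,\alpha)}(1) = \binom{n+\alpha}{n}\lesssim n^{\alpha}$ combined with standard asymptotics for the $L^2$-normalization constants, which are themselves polynomial in $n$. This gives $\|g_i\|_\infty \lesssim i^{m_0}$ for some $m_0$ depending only on $\kappa$, and combined with $\lambda_i \asymp i^2$ we obtain the claimed bound $g_i \lesssim \lambda_i^m$ with $m = m_0/2$.

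The main obstacle, in my view, is checking rigorously that the Dirichlet eigenvalue problem for $L$ on $(0,2\pi)$ is carried over by the $h$-transform exactly onto the regular Jacobi eigenvalue problem on $[-1,1]$: one needs to verify that no eigenfunctions are lost or spuriously gained. Concretely, this amounts to showing that the Dirichlet condition $\phi_i(0^+)=\phi_i(2\pi^-)=0$ corresponds precisely to the regularity of $g_i$ at the endpoints $y=\pm 1$, which is ensured because $\phi_0$ vanishes at these points with exactly the correct Frobenius exponent $8/\kappa - 1$ given by the indicial equation at the singular points of $L$. Once this correspondence is nailed down, completeness of Jacobi polynomials in $L^2((-1,1),(1-y^2)^\alpha dy)$ closes the argument.
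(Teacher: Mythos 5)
Your proposal is correct, and it takes a genuinely different route from the paper. The paper never identifies the eigenfunctions: it bootstraps the boundary exponent, starting from the crude Cauchy--Schwarz bound $\phi_i\lesssim\lambda_i$ of \eqref{GR} and repeatedly feeding the kernel estimate of claim \ref{Cl3} back into $\phi_i=\lambda_i G\phi_i$, improving the power of $\sin\frac{x}{2}$ at each step until it reaches $\frac{8}{\kappa}-1$; this is soft, uses only the explicit Green kernel, and would survive perturbations that destroy exact solvability. Your $h$-transform computation is right ($\tilde L=\frac{\kappa}{2}\partial_x^2+2\cot\frac{x}{2}\,\partial_x$ is indeed the everlasting-survival generator \eqref{eqSur}, and $y=\cos\frac{x}{2}$ gives the symmetric Jacobi operator with $\alpha=\frac{4}{\kappa}-\frac12>0$), and what it buys is more than the lemma: the exact spectrum $\lambda_i=1-\kappa/8+i+\kappa i^2/8$ (so \eqref{EVG} becomes trivial), a sharp exponent $g_i\lesssim\lambda_i^{2/\kappa}$ in place of an unspecified $m$, and, since $\frac{d}{dy}P_n^{(\alpha,\alpha)}$ is again a Jacobi polynomial and the chain rule produces exactly the factor $\sin\frac{x}{2}$, Lemma \ref{Cl2} essentially for free. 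The step you flag is the real one, and to close it you need two a priori facts about the paper's $\phi_i$ (defined as $L^2(m)$-eigenfunctions of the integral operator): interior $C^2$ regularity, which the paper records, so that $g_i$ solves the ODE classically, and vanishing at the endpoints, which rules out the second Frobenius solution (indicial roots $0$ and $\frac{8}{\kappa}-1$; for the bad solution $\phi_0 g$ tends to a nonzero constant) --- this vanishing is exactly what the first Cauchy--Schwarz step already gives, since $|\phi_i(x)|\le\lambda_i\bigl(\int G(x,y)^2 m(dy)\bigr)^{1/2}=\lambda_i z(x,0)^{1/2}\to 0$. Alternatively, one can avoid boundary analysis of the unknown $\phi_i$ altogether: verify by two integrations by parts (boundary terms vanish because everything is explicit and $O(\sin^{8/\kappa-1}\frac{x}{2})$ near the ends) that each $\phi_0(x)P_n^{(\alpha,\alpha)}(\cos\frac{x}{2})$ is an eigenfunction of the explicit kernel \eqref{GF}; completeness of the Jacobi polynomials in $L^2((-1,1),(1-y^2)^\alpha dy)$ together with simplicity of the strictly increasing eigenvalues then forces $\phi_i$ to be a multiple of $\phi_0 P_i$, which is your identification.
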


Then we go on to push this control a step further to show that the boundary growth of other eigenfunctions is in fact even nicer:

\begin{lemma}\label{Cl2}
For all $i \in \NN$, we have 
\begin{equation*}
g_i'(x) \lesssim \lambda_i^{m+1}\sin \frac{x}{2}
\end{equation*}
where the implied constant does not depend on $i$.
\end{lemma}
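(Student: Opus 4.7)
The plan is to derive an exact integral formula for $g_i'$ from the eigenvalue equation $L\phi_i = -\lambda_i \phi_i$ and then estimate the right-hand side using Lemma~\ref{Cl1}. Substituting $\phi_i = \phi_0 g_i$, using $L\phi_0 = -\lambda_0 \phi_0$, and computing $\phi_0'/\phi_0 = \tfrac{8-\kappa}{2\kappa}\cot(x/2)$, the $g_i$-terms combine and the first-order coefficient collapses (independently of $\kappa$) to $2\cot(x/2)$, giving
\[\frac{\kappa}{2}\, g_i''(x) + 2\cot(x/2)\, g_i'(x) = -(\lambda_i - \lambda_0)\, g_i(x).\]
The natural integrating factor is $\sin^{8/\kappa}(x/2)$, under which the ODE assumes the divergence form
\[\bigl(\sin^{8/\kappa}(x/2)\, g_i'(x)\bigr)' = -\tfrac{2(\lambda_i - \lambda_0)}{\kappa}\sin^{8/\kappa}(x/2)\, g_i(x).\]

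Integrating this identity from $0$ to $x$ would yield
\[g_i'(x) = -\frac{2(\lambda_i - \lambda_0)}{\kappa\, \sin^{8/\kappa}(x/2)}\int_0^x \sin^{8/\kappa}(y/2)\, g_i(y)\, dy,\]
provided the boundary term at $0$ vanishes. To justify this I would argue as follows: since the Green's function vanishes at $0$, every eigenfunction satisfies $\phi_i(0) = 0$; the indicial equation of the singular ODE at $0$ has roots $0$ and $8/\kappa - 1$, and the Dirichlet condition together with square-integrability against the speed measure selects the root $8/\kappa - 1$, matching the leading Frobenius behavior of $\phi_0$. Consequently $g_i$ extends smoothly to $0$ (the two fractional factors cancel), so $g_i'(x)$ is bounded near $0$ and $\sin^{8/\kappa}(x/2)\, g_i'(x) \to 0$ as $x \to 0^+$.

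Once the integral formula is in hand, applying Lemma~\ref{Cl1} bounds $|g_i(y)| \lesssim \lambda_i^m$ uniformly, and using $|\lambda_i - \lambda_0| \lesssim \lambda_i$ reduces the desired estimate to the elementary comparison
\[\frac{1}{\sin^{8/\kappa}(x/2)}\int_0^x \sin^{8/\kappa}(y/2)\, dy \;\lesssim\; \sin(x/2) \qquad \text{for } x \in (0, \pi],\]
which is immediate by matching power laws near $0$ (both numerator and denominator are asymptotically $\asymp x^{8/\kappa+1}$ and $\asymp x^{8/\kappa}$ respectively, whose ratio is $\asymp x \asymp \sin(x/2)$) and by the denominator being bounded below away from $0$. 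For $x \in [\pi, 2\pi)$, I would run the same argument from the other endpoint: the operator $L$ is invariant under $x \leftrightarrow 2\pi - x$ (the drift $\cot(x/2)$ is odd and the second-order part is even under this reflection), so integrating the divergence identity from $2\pi$ produces the analogous bound, and combining the two halves yields $|g_i'(x)| \lesssim \lambda_i^{m+1}\sin(x/2)$.

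The main technical obstacle is the justification of the vanishing boundary term at the singular endpoints of the diffusion, which requires local Frobenius/indicial analysis of the singular ODE and use of the Dirichlet condition encoded in the Green operator; the remaining steps reduce to integrating-factor calculus together with elementary asymptotics of $\sin^{8/\kappa}(x/2)$ near the endpoints of the interval.
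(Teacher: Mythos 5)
Your proposal reproduces the same skeleton as the paper: substitute $\phi_i = \phi_0 g_i$ into the eigenvalue equation, collapse the first-order coefficient to $2\cot(x/2)$, pass to the divergence form $(\sin^{8/\kappa}(x/2)\,g_i')' = -\tfrac{2(\lambda_i-\lambda_0)}{\kappa}\sin^{8/\kappa}(x/2)\,g_i$, integrate from the endpoint, bound the integrand via Lemma~\ref{Cl1}, and conclude with the elementary comparison $\sin^{-8/\kappa}(x/2)\int_0^x\sin^{8/\kappa}(y/2)\,dy \lesssim \sin(x/2)$ together with the $x\leftrightarrow 2\pi-x$ symmetry. (Incidentally, you have the sign right: the paper's Sturm--Liouville display is missing a minus sign, as plugging in $\phi_0 = \sin^{8/\kappa-1}(x/2)$ with $\lambda_0 = 1-\kappa/8 > 0$ shows; the paper's argument is insensitive to this since it takes absolute values.) Where you genuinely diverge from the paper is in how the boundary term at $0$ is killed. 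The paper proves this quantitatively (its Claim~\ref{Cl4}): it writes $\phi_0 g_i' = \phi_i' - \phi_0' g_i$, bounds $\phi_0' g_i \lesssim \lambda_i^m\sin^{8/\kappa-2}(x/2)$ from Lemma~\ref{Cl1}, bounds $\phi_i'$ by differentiating the Green-operator integral identity $\phi_i = \lambda_i G\phi_i$ under the integral, and gets $\sin^{8/\kappa}(x/2)g_i'(x) = O(\lambda_i^{m+1}\sin^{8/\kappa-1}(x/2)) = o(1)$ since $8/\kappa > 1$. You instead invoke a Frobenius/indicial analysis of the singular ODE: since $G(0,\cdot)=0$ forces $\phi_i(0)=0$, and the indices at $0$ are $0$ and $8/\kappa-1$, the Dirichlet condition selects the regular Frobenius solution (one must note that when $8/\kappa-1$ is a positive integer the second solution $\phi_1\log x + \text{series}$ still has a nonvanishing $x^0$ term, so Dirichlet alone suffices), whence $g_i=\phi_i/\phi_0$ extends analytically and $g_i'$ is bounded near $0$. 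Both routes work; the paper's is more self-contained within its Green-operator machinery and yields an explicit decay rate for the boundary term, while yours is structurally cleaner but leans on classical singular-ODE theory and is slightly hand-wavy about the integer-difference case. Since uniformity in $i$ is only needed through the Cl1-bound on the integrand and not through the rate at which the boundary term vanishes, the lack of an explicit rate in your version does not affect the final estimate $|g_i'(x)| \lesssim \lambda_i^{m+1}\sin(x/2)$.
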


\begin{proof}[Proof of lemma \ref{Cl1}]
To prove the first lemma, notice that it is enough to show the claim near $x = 0$, as firstly by \eqref{GR} and the fact that $\phi_0$ does not vanish inside the interval we know that the claim holds trivially in any compact subinterval of $(0,2\pi)$ and secondly, our diffusion is symmetric with respect to $\pi$ and thus boundary behaviour is the same near $0$ and $2\pi$.

Now the key is to notice that the Green's function is actually much more regular than needed for being in $L^2(I,m(dx))$. For example from $G_y(x) \lesssim \phi_0(x)$ it already follows that the Green's function lies in $L^1(I,m(dx))$. 

Our next aim is to use a bootstrap the scaling of the eigenfunctions $\phi_i(x)$, by improving step by step on the Cauchy-Schwarz in \eqref{GR}.
In this respect, consider the following expression for $x$ near $0$ and for $a \geq 0$:
\[z(x,a) = \int_0^{2\pi} G(x,y)^2\sin^{2a}\frac{y}{2}m(dy)\]

\begin{claim}\label{Cl3}
$z(x,a) \lesssim \max(\sin^{\frac{8}{\kappa}+2a+1}\frac{x}{2},\sin^{2(\frac{8}{\kappa}-1)}\frac{x}{2})$
\end{claim}

Using this claim, it is easy to improve step by step on the regularity of the eigenfunctions and to prove the lemma. 

Indeed, notice that in \eqref{GR} the first term on the RHS is given by 
\[z(x,0)^{1/2} \lesssim \lambda_i\sin^{\frac{4}{\kappa}+1/2}\frac{x}{2}\]
and thus it follows that $\phi_i(x) \lesssim \lambda_i\sin^{\frac{4}{\kappa}+1/2}\frac{x}{2}$. Notice that for $\kappa \geq 8/3$ we could hence stop here, as $\frac{4}{\kappa}+1/2 \geq \frac{8}{\kappa}-1$ and we already have the statement of the lemma. For smaller $\kappa$ consider the following bootstrap:

Suppose that we already know that $\phi_i(x) \lesssim \lambda_i^k\sin^{\frac{4}{\kappa}-1+a}\frac{x}{2}$. Then using a similar strategy as in \eqref{GR}, we could write using claim \ref{Cl3}
\begin{equation*}
\begin{split}
\abs{\frac{1}{\lambda_i}\phi_i(x)} = \abs{G\phi_i(x)} &= \abs{\int G(x,y)\phi_i(y)m(dy)} \\
&\leq \lambda_i^k(\int G^2(x,y)\sin^{2a}\frac{y}{2}m(dy))^{1/2}(\int \sin^{\frac{8}{\kappa}-2}\frac{y}{2}m(dy))^{1/2} \\ 
& = O(\lambda_i^k z(x,a)^{1/2}))\\
&\lesssim \lambda_i^k\max(\sin^{\frac{4}{\kappa}+a+\frac{1}{2}}\frac{x}{2},\sin^{\frac{8}{\kappa}-1}\frac{x}{2})
\end{split}
\end{equation*}
and thus $\phi_i(x) \lesssim \lambda_i^{k+1} \max(\sin^{\frac{4}{\kappa}+a+\frac{1}{2}}\frac{x}{2},\sin^{\frac{8}{\kappa}-1}\frac{x}{2})$.
Thereby we can improve on the boundary scaling $m-1$ times until we get $\phi_i(x) \lesssim \lambda_i^m\phi_0(x)$ as needed, whereas the implied constants have been independent of $i$.

Hence to prove the lemma we just need to prove the claim above.

\begin{proof}[Proof of claim \ref{Cl3}]
Using the form of the Green's function, we can bound 
\[z(x,a) = \int_0^{2\pi} G(x,y)^2\sin^{2a}\frac{y}{2}m(dy)\]
by the following:
\begin{equation*}
\begin{split}
z(x,a) \lesssim \max(&\int_0^x s(y)^2(s(2\pi)-s(x))^2\sin^{2a}\frac{y}{2}m(dy), \\
& \int_x^{2\pi} s(x)^2(s(2\pi)-s(y))^2\sin^{2a}\frac{y}{2}m(dy))
\end{split}
\end{equation*}
This can be further simplified to
\[z(x,a) \lesssim \max(\int_0^x s(y)^2\sin^{2a}\frac{y}{2}m(dy), s(x)^2)\]
Inserting now the definitions of the scale function and the speed measure, this gives us for $x$ small:
\[z(x,a) \lesssim \max(\sin^{\frac{8}{\kappa}+2a+1}\frac{x}{2},\sin^{2(\frac{8}{\kappa}-1)}\frac{x}{2})\]
Thus the claim \ref{Cl3} and the proof of lemma \ref{Cl1} follow.
\end{proof}
\end{proof}

The second lemma improves on this multiplicative regularity. And to prove it, we need to go back to the generator of the diffusion and use the fact that any eigenfunction of the Green's operator is also an eigenfunction of the generator \cite{Yosida}.

\begin{proof}[Proof of lemma \ref{Cl2}]
From the previous claim, we know than we can write $\phi_i(x) = \phi_0(x)g_i(x)$ for $g_i = O(\lambda_i^m)$. Notice also that $g_i$ has $C^2$ regularity inside any compact interval of $(0, 2\pi)$ as both $\phi_i$, $\phi_0$ have this regularity and $\phi_0=\sin^{\frac{8}{\kappa}-1} \frac{x}{2}$ is non-zero inside the whole interval.

Now every $\phi_i$ is also an eigenfunction of the generator of the diffusion. This can be stated in the Sturm-Liouville form:
\[\left(\frac{\kappa}{2}\sin^{2-\frac{8}{\kappa}}\frac{x}{2}\phi_i'(x)\right)'=\lambda_i\sin^{2-\frac{8}{\kappa}}\frac{x}{2}\phi_i(x)\]
Replacing now $\phi_i(x) = \phi_0(x)g_i(x)$, using the fact that $\phi_0(x)$ is an eigenfunction, we can calculate inside any compact interval of $(0,2\pi)$:
\[\frac{\kappa}{2}\sin^{2-\frac{8}{\kappa}}\frac{x}{2}\phi_0'(x)g_i'(x)+\left(\frac{\kappa}{2}\sin^{2-\frac{8}{\kappa}}\frac{x}{2}\phi_0(x)g_i'(x)\right)'=(\lambda_i-\lambda_0)\sin^{2-\frac{8}{\kappa}}\frac{x}{2}\phi_0(x)g_i(x)\]
Plugging in the exact form of $\phi_0(x)$ and a few calculations, we have:
\[2\cos\frac{x}{2}g_i'(x)+\frac{\kappa}{2}\sin\frac{x}{2}g_i''(x) =(\lambda_i-\lambda_0)\sin \frac{x}{2}g_i(x)\]

Thus we obtain the following Sturm-Liouville form for $g_i(x)$, which holds at least inside any compact of $(0,2\pi)$.
\[ \left(\frac{\kappa}{2}\sin^{\frac{8}{\kappa}}\frac{x}{2}g_i'(x)\right)' = (\lambda_i-\lambda_0)\sin^{\frac{8}{\kappa}}\frac{x}{2}g_i(x)\]

But now $g$ is bounded and $C^2$, the right hand side can be nicely integrated up to any $\epsilon > 0$ and we get 
\begin{equation}
\label{SL}
\left[\frac{\kappa}{2}\sin^{\frac{8}{\kappa}}\frac{x}{2}g'(x)\right]_\epsilon^{x_0} = (\lambda_i-\lambda_0)\int_\epsilon^{x_0}\sin^{\frac{8}{\kappa}}\frac{x}{2}g(x)dx
\end{equation}
We first claim the following:  
\begin{claim}
\label{Cl4}
As $\epsilon \downarrow 0$ we have 
\[\left[\frac{\kappa}{2}\sin^{\frac{8}{\kappa}}\frac{x}{2}g'(x)\right](\epsilon) = o(1)\] 
\end{claim}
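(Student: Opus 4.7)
The plan is to argue by contradiction using the boundedness of $g_i$ from Lemma \ref{Cl1}. Integrating the Sturm--Liouville identity \eqref{SL} from $\epsilon$ to a fixed interior point $x_0 \in (0,2\pi)$ — which is permitted because the identity is stated to hold inside any compact subinterval of $(0,2\pi)$ — gives
\[
\frac{\kappa}{2}\sin^{8/\kappa}\!\left(\tfrac{x_0}{2}\right)g_i'(x_0) \;-\; \frac{\kappa}{2}\sin^{8/\kappa}\!\left(\tfrac{\epsilon}{2}\right)g_i'(\epsilon) \;=\; (\lambda_i-\lambda_0)\int_\epsilon^{x_0}\sin^{8/\kappa}\!\left(\tfrac{y}{2}\right)g_i(y)\,dy.
\]
By Lemma \ref{Cl1}, $|g_i(y)| \lesssim \lambda_i^m$ uniformly on $(0,2\pi)$, while $\sin^{8/\kappa}(y/2) = O(y^{8/\kappa})$ near $0$ is trivially integrable. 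Hence the right-hand side converges as $\epsilon \downarrow 0$, and therefore so does the boundary term at $\epsilon$: the limit
\[
c \;:=\; \lim_{\epsilon\downarrow 0}\,\frac{\kappa}{2}\sin^{8/\kappa}\!\left(\tfrac{\epsilon}{2}\right)g_i'(\epsilon)
\]
exists in $\RR$, and it suffices to show $c=0$.

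Next I would assume for contradiction that $c \neq 0$. Then on some small interval $(0,x_1)$ the quantity $\frac{\kappa}{2}\sin^{8/\kappa}(x/2)\,g_i'(x)$ stays in a neighbourhood of $c$ of diameter $<|c|/2$; in particular $g_i'$ has constant sign on $(0,x_1)$ and
\[
|g_i'(x)| \;\geq\; \frac{|c|}{\kappa}\,\sin^{-8/\kappa}\!\left(\tfrac{x}{2}\right) \;\asymp\; x^{-8/\kappa} \qquad (x\in(0,x_1)).
\]
Since $\kappa<8$ gives $8/\kappa>1$, we have $\int_0^{x_1} x^{-8/\kappa}\,dx = +\infty$. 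Because $g_i'$ keeps a fixed sign on $(0,x_1)$ this forces
\[
|g_i(x_1)-g_i(\epsilon)| \;=\; \int_\epsilon^{x_1}|g_i'(y)|\,dy \;\longrightarrow\; \infty \qquad (\epsilon\downarrow 0),
\]
contradicting the uniform bound on $|g_i|$ from Lemma \ref{Cl1}. Hence $c=0$, which is the content of Claim \ref{Cl4}.

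The only real obstacle is this contradiction step, and the key hypothesis it uses is $\kappa<8$: it is exactly this condition that makes $s'(x)=\sin^{8/\kappa-2}(x/2)$ singular enough at $0$ that a nonzero flux $\sin^{8/\kappa}(x/2)\,g_i'(x)$ is incompatible with a bounded primitive. Once Claim \ref{Cl4} is established, the Sturm--Liouville equation may be integrated all the way down to $x=0$ with the boundary term dropped, and by the symmetry of the diffusion about $\pi$ the analogous vanishing holds at $2\pi$. This clean integral identity is precisely what is needed to upgrade the crude estimate $g_i\lesssim \lambda_i^m$ into the sharp multiplicative boundary regularity $g_i'(x)\lesssim \lambda_i^{m+1}\sin(x/2)$ asserted in Lemma \ref{Cl2}.
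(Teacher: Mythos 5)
Your argument is correct, and it is a genuinely different proof of Claim \ref{Cl4} from the one in the paper. The paper argues directly: it splits $\phi_0 g' = \phi_i' - \phi_0' g$ via the triangle inequality, bounds $\phi_i'$ by differentiating the integral representation $\phi_i = \lambda_i G\phi_i$ under the integral sign, and concludes that the boundary term is $O(\lambda_i^{m+1}\epsilon^{8/\kappa-1})=o(1)$ for $\kappa<8$. You instead argue by contradiction: the convergence of the right-hand side of \eqref{SL} (guaranteed by the boundedness of $g_i$ from Lemma \ref{Cl1} and the integrability of $\sin^{8/\kappa}(y/2)$) forces the limit $c$ of the boundary flux to exist, and if $c\neq 0$ then the resulting lower bound $|g_i'(x)|\gtrsim x^{-8/\kappa}$ on a one-sided neighbourhood of $0$, integrated against a sign-definite $g_i'$, would blow up $g_i$ itself, contradicting Lemma \ref{Cl1}. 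The hypothesis $\kappa<8$ enters both arguments at exactly the same point, namely that $8/\kappa-1>0$, but is packaged differently.

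Each approach has a virtue. The paper's hard estimate is quantitatively sharper, producing an explicit rate of decay $O(\lambda_i^{m+1}\epsilon^{8/\kappa-1})$ and tracking the dependence on the eigenvalue $\lambda_i$; the cost is that it requires justifying differentiation through the integral operator and controlling $\partial_x G(x,y)$ near the boundary. Your soft argument avoids any manipulation of the Green's function beyond what is already encoded in \eqref{SL}, and is therefore shorter and more self-contained, but it yields only the qualitative $o(1)$ and does not by itself quantify the $\lambda_i$-dependence. Since Claim \ref{Cl4} is a statement about a single fixed $i$ and the uniformity in $i$ in Lemma \ref{Cl2} is supplied afterwards by the separate estimate of the right-hand side of \eqref{SL} using Lemma \ref{Cl1}, your argument is fully sufficient for the role the claim plays, and the downstream deduction of $g_i'(x)\lesssim\lambda_i^{m+1}\sin(x/2)$ goes through unchanged.
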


We know that $\phi_i(x), g_i(x)$ are $C^2$ inside any compact interval of $(0,2\pi)$. Thus we can differentiate $\phi_i(x) = \phi_0(x)g(x)$ and using the triangle inequality write
\begin{equation}
\label{TRIANGLE}
\abs{\phi_0(x)g'(x)} \leq \abs{\phi_i'(x)} + \abs{\phi_0'(x)g(x)}
\end{equation}
For the second term of the RHS, we know that $\phi_0(x)=\sin^{\frac{8}{\kappa}-1}\frac{x}{2}$ and from lemma \ref{Cl1} we know that $g_i = O(\lambda_i^m)$. Hence the second term is of order $O(\lambda_i^m\sin^{\frac{8}{\kappa}-2}\frac{x}{2})$. To get a bound on the first term of the RHS consider again the integral equation satisfied by eigenfunctions:
\[\frac{1}{\lambda_i}\phi_i(x) = \int G(x,y)\phi_i(y)m(dy)\]
Now $\phi_i(x)$ is differentiable inside compacts of $(0, 2\pi)$, and also the Green's function $G(x,y)$ is
differentiable unless $x=y$, at which point it is both left and right-differentiable but these derivatives have a finite gap between them. Thus we can differentiate both sides to get:
\[\frac{1}{\lambda_i}\phi_i'(x) = \int \frac{\partial}{\partial x}G(x,y)\phi_i(y)m(dy)\]
Plugging in the form of the Green's function shows that the RHS can be bounded by $O(\lambda_i^m\sin^{\frac{8}{\kappa}-2}\frac{x}{2})$ and thus $\abs{\phi_i'(x)} =  O(\lambda_i^{m+1}\sin^{\frac{8}{\kappa}-2}\frac{x}{2})$.

Thus we see that in the triangle inequality \eqref{TRIANGLE}, the whole of RHS is of order \[O(\lambda_i^{m+1}\sin^{\frac{8}{\kappa}-2}\frac{x}{2})\]
In particular this must hold for the LHS, i.e. we have 
\[\abs{\phi_0(x)g'(x)} = O(\lambda_i^{m+1}\sin^{\frac{8}{\kappa}-2}\frac{x}{2})\]

To prove the claim, recall that $\phi_0(x) = \sin^{\frac{8}{\kappa}-1}\frac{x}{2}$. Hence, as $8\kappa > 1$, it follows that
\[\left[\frac{\kappa}{2}\sin^{\frac{8}{\kappa}}\frac{x}{2}g'(x)\right](\epsilon) = O(\lambda_i^{m+1}\epsilon^{\frac{8}{\kappa}-1})=o(1)\]
and thus our claim \ref{Cl4} follows.

Finally return to \eqref{SL}. The absolute value of the right hand side can be bounded by $O(\lambda_i^{m+1}\sin^{\frac{8}{\kappa}+1}\frac{x}{2})$ using lemma \ref{Cl1}. From our recent claim we know that by letting $\epsilon \downarrow 0$, only the term $\frac{\kappa}{2}\sin^{\frac{8}{\kappa}}\frac{x}{2}g'(x_0)$ survives. Thus get the claimed derivative bound: 
\[\left(\frac{\phi_i(x_0)}{\phi_0(x_0)}\right)'=g'(x_0) \lesssim \lambda_i^{m+1}\sin\frac{x_0}{2}\]
\end{proof}

\subsubsection{Diffusion up to time $s \leq T - 10$}
Given a sufficiently regular diffusion of diffusion coefficient $a/2$ and drift term $b$, one can use either Doob's H-transform \cite{RW} or direct calculations as in Pinsky \cite{PINSKY} to show that, conditioned on $\tau \in (T,T+c')$, up to time $T$ we have a non-homogeneous diffusion with the following generator
\[L_s^T = 1/2 \nabla \cdot a\nabla + b\nabla + a \frac{\nabla \PP_x(c + T -s \geq \tau > T - s)}{\PP_x(c + T-s\geq \tau > T-s)}\nabla\]  
It is also known by same methods that conditioned on everlasting survival, the generator becomes
\[L_s^\infty = 1/2 \nabla \cdot a\nabla + b\nabla + a \frac{\nabla \phi_0(x)}{\phi_0(x)}\nabla\]
In our concrete setting this means that conditioned on everlasting survival our diffusion process is given by
\begin{equation}\label{eqSur}
d\alpha^\infty_s = \sqrt{\kappa}dB_s + 2\cot \frac{\alpha^\infty_s}{2}ds
\end{equation}  
Our aim is to then show that for $T$ large at least until some time $T-10$ the diffusion conditioned to survive up to time $T$ is almost the same. More explicitly, we claim that
\begin{lemma}\label{mainterm}
The conditioned diffusion can be written as:
\begin{equation}\label{eqC}
d\alpha^T_s = \sqrt{\kappa}dB_s + (2\cot \frac{\alpha^T_s}{2} + E^T(\alpha^T_s, s))ds
\end{equation}
for some independent Brownian motion $B_t$ and the error term 
\[E^T(x,s) = \frac{\nabla \PP_x(c + T -s \geq \tau > T - s)}{\PP_x(c + T -s \geq \tau > T - s)} - \frac{\nabla \phi_0(x)}{\phi_0(x)}\] 
satisfies $E^T(x,s) \lesssim e^{-a(T-s)}$ for $s \in [0,T-10]$, for some $a > 0$ and uniformly over the interval $[0,2\pi]$. Hence in the time interval $[0, T-10]$ the conditioned diffusion is absolutely continuous with respect to the everlasting survival process.
\end{lemma}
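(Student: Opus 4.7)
The plan is to feed the eigenfunction expansion of the Dirichlet semigroup developed in the previous subsection into the Doob $h$-transform formula for the drift correction, so that the error term $E^T$ arises purely from the subleading eigenmodes and Lemmas \ref{Cl1} and \ref{Cl2} provide the required decay.

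First I would spectrally expand the survival probability as
\[
\PP_x(\tau > t) = \sum_{i \geq 0} c_i \phi_i(x) e^{-\lambda_i t}, \qquad c_i = \int_0^{2\pi} \phi_i(y)\, m(dy),
\]
which is just the expansion of $P_t \mathbf{1}$ in the orthonormal basis $\{\phi_i\}$. Subtracting at times $t = T-s$ and $t = T-s+c$ and factoring out the leading $i=0$ contribution gives
\[
\PP_x(c + T-s \geq \tau > T-s) = c_0(1 - e^{-\lambda_0 c})\, \phi_0(x)\, e^{-\lambda_0(T-s)}\bigl(1 + r(x,s)\bigr),
\]
where, writing $g_i = \phi_i/\phi_0$ and $\tilde c_i = c_i/[c_0(1-e^{-\lambda_0 c})]$,
\[
r(x,s) = \sum_{i \geq 1} \tilde c_i\, (1 - e^{-\lambda_i c})\, g_i(x)\, e^{-(\lambda_i-\lambda_0)(T-s)}.
\]
Since the prefactor in front of $1+r$ does not depend on $x$, one reads off
\[
E^T(x,s) = \frac{r'(x,s)}{1 + r(x,s)},
\]
and the whole problem reduces to uniformly controlling $r$ and $r'$ on $[0, 2\pi]\times[0, T-10]$.

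Next I would invoke the boundary estimates. Lemma \ref{Cl1} yields $|g_i(x)| \lesssim \lambda_i^m$, while \eqref{GR} together with $|c_i| = |\int \phi_i\, m(dy)| \lesssim \lambda_i$ gives $|\tilde c_i| \lesssim \lambda_i$, whence
\[
|r(x,s)| \lesssim \sum_{i \geq 1} \lambda_i^{m+1} e^{-(\lambda_i-\lambda_0)(T-s)}.
\]
The series converges: \eqref{EVG} forces $\lambda_i \gtrsim \sqrt{i}$ for all but finitely many $i$, and then for $T-s \geq 10$ the tail is bounded by a constant multiple of $e^{-a(T-s)}$ for some $a \in (0,\lambda_1-\lambda_0)$. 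In particular $|r(x,s)| \leq 1/2$ once $T$ is large enough, so $(1+r)^{-1}$ is uniformly bounded in $x$ and $s$. Lemma \ref{Cl2} gives $|g_i'(x)| \lesssim \lambda_i^{m+1}\sin\tfrac{x}{2} \leq \lambda_i^{m+1}$, and the same summation argument applied termwise to $r'(x,s)$ produces the identical bound $|r'(x,s)| \lesssim e^{-a(T-s)}$ uniformly in $x \in [0, 2\pi]$. Combining, $|E^T(x,s)| \lesssim e^{-a(T-s)}$ on $[0,T-10]$ as claimed.

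Finally, absolute continuity between $\alpha^T$ and $\alpha^\infty$ on $[0, T-10]$ is then a routine application of Girsanov's theorem: the drift perturbation $E^T$ is uniformly bounded (in fact exponentially small in $T$), so Novikov's condition is automatic and the Radon--Nikodym derivative is well defined and square-integrable. The main obstacle is nothing more than propagating the estimates uniformly up to the singular endpoints $0$ and $2\pi$, where $\phi_0$ vanishes and a naive bound on $g_i = \phi_i/\phi_0$ would blow up; this is precisely why Lemmas \ref{Cl1} and \ref{Cl2} were established first.
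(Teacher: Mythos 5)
Your proposal is correct and follows essentially the same route as the paper: expand the survival probability in the eigenbasis, factor out the $\phi_0 e^{-\lambda_0(T-s)}$ mode so that $E^T$ becomes a ratio controlled by the subleading terms, and then use Lemma \ref{Cl1}, Lemma \ref{Cl2} and \eqref{EVG} to get the uniform $e^{-a(T-s)}$ decay with $a$ of order $\lambda_1-\lambda_0$ (your identity $E^T=r'/(1+r)$ is just the paper's numerator/denominator computation with the leading mode divided out). The only cosmetic caveats, of the same level of informality as the paper itself, are that for $\kappa\le 8/3$ the bound on the coefficients $c_i$ should come from the boundary decay in Lemma \ref{Cl1} (since $m$ is then an infinite measure), and that the smallness of $r$ is governed by $T-s\ge 10$ (with the cutoff chosen large enough), not by $T$ alone.
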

The proof of this lemma just makes use of our control on the eigenfunctions:
\begin{proof}
We start by writing out a series representation for $\PP_x(c + T -s \geq \tau > T - s)$. To do this, notice first
that 
\[\PP_x(c + T - s \geq \tau > T-s) = \PP_x(\tau > T - s) - \PP_x(\tau > c+ T-s)\]
and so it suffices to find series representation for the similar terms on the RHS.

Now, using lemma \ref{Cl1} and the condition on the growth of eigenvalues \eqref{EVG}, it is easy to see \cite{BASS}, that for any $t > 0$ the transition probabilities of the initial process \eqref{eqD} can be written as a sum converging absolutely and uniformly over the whole interval $[0,2\pi]$: 
\[\PP_x(\alpha_t \in dy) = \sum_{i=0}\phi_i(x)e^{-\lambda_i T}\phi_i(y)m(dy)\]
Thus survival probability can be written as a series
\begin{equation}
\PP_x(\tau > T) = \sum_{i=0}c_i\phi_i(x)e^{-\lambda_i T}
\label{SP}
\end{equation}
Similarly the convergence of this sum is also absolute and uniform over the interval. Moreover, if we choose some $t_0 > 0$, then for all $T > t_0$ the convergence is uniform in $t$ as well. Any $t_0 > 0$ would do, so we pick $t_0 = 10$. 

Notice that then we can in fact write that $\PP_x(\tau > T) \asymp e^{-\lambda_0T}\phi_0$ for all $T > t_0$. This gives us in a slightly more direct manner the conclusion of the first moment argument for the Hausdorff dimension of SLE curves in \cite{BEF2}. More precisely, it replaces the hands-on technical section 1.2 of that paper by the more general setup presented here.  

Now plugging in the expansion \eqref{SP} using the remark above, we have
\[e^T(x,s) = \frac{\sum_{i=1}c_i'\left(\phi_i'(x)\phi_0(x)-\phi_0'(x)\phi_i(x)\right)e^{-\lambda_i (T-s)}}{\phi_0(x)\left(\PP_x(\tau > T-s)-\PP_x(\tau > c+ T-s)\right)}\]
with $c_i' = c_i(1-e^{-\lambda_i c})$.

We start from the denominator. Using the uniform convergence for $T-s > 10$ and $\lambda_1 > \lambda_0$ we have
\[\PP_x(\tau > T - s) - \PP_x(\tau > c+ T-s) = c_0'\phi_0(x)e^{-\lambda_0(T-s)} + O(\phi_0(x))e^{-0.5(\lambda_0+\lambda_1)(T-s)}\]
Thus we have a lower bound: 
\[\PP_x(\tau > T - s) - \PP_x(\tau > c + T-s)  \gtrsim \phi_0(x)e^{-\lambda_0(T-s)}\] 

For the nominator, write 
\[\phi_i'(x)\phi_0(x)-\phi_0'(x)\phi_i(x) = \phi_0^2(x)\left(\frac{\phi_i(x)}{\phi_0(x)}\right)'\]
Plugging in the derivative estimates from lemma \ref{Cl2} and using the bound on the growth of eigenvalues \eqref{EVG}, we have for $T-s > 10$ uniformly
\[\abs{\sum_{i=1}c_i'\left(\phi_i'(x)\phi_0(x)-\phi_0'(x)\phi_i(x)\right)e^{-\lambda_i (T-s)}} \lesssim  e^{-0.5(\lambda_1+\lambda_0)(T-s)}\phi_0^2(x)\]
And thus for $T-s > 10$ uniformly over time and space
\[\abs{E^T(x,s)} \lesssim e^{-0.5(\lambda_1-\lambda_0)(T-s)}\]
and the lemma follows.
\end{proof}

Putting things together we find the total winding of this part:
\[\int_0^{T-10} \cot \frac{\alpha_s}{2} \overset{d}{\sim}\frac{\sqrt{\kappa}}{2}B_{T-10} + (\alpha^T_{T-10} - \alpha^T_0) + \int_0^{T-10}E(\alpha^T_s,s)ds\]
Now, $\alpha^T_s$ itself is bounded and due to the exponential decay of the error term, the final term is also uniformly bounded. Finally, from the Brownian part we get a Gaussian of variance $T-10$. This gives us that conditioned on $\tau \in [T,T+c]$ we have
\begin{equation}\label{R1}
\int_0^{T-10} \cot \frac{\alpha_s}{2}ds \overset{d}{\sim}\frac{\kappa}{2}X + E_B
\end{equation}
with $X$ Gaussian of variance $T-10$ and $E_B$ some uniformly bounded random error (not independent of $X$). Looking at the exponential moments, we account for the main term of the theorem and a multiplicative error.

\subsubsection{The remaining part: $T-10 < t \leq \tau$}
Now after the time $T-10$, our control on the drift term gets gradually worse and worse and hence our previous strategy doesn't allow the exact estimation of the contribution to winding by relating it to the Brownian motion. This is due to the fact that the initial strong boundary repulsion at time 0 changes gradually to an attraction at time $T$. Hence we need a different strategy. 

We start by reducing our workload considerably: 
\begin{claim}
\label{PN}
It is sufficient to only deal with the upper bound of the exponential moments for $\lambda > 0$. 
\end{claim}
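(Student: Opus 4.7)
Write $w = w^{\text{main}} + w^{\text{rest}}$ with cutoff at time $T-10$. By the time-inhomogeneous Markov property of the diffusion conditioned on $\tau\in[T,T+c]$ (the conditioning corresponds to a Doob transform that preserves the Markov structure, whence the explicit generator $L_s^T$ of the previous subsection), $w^{\text{main}}$ and $w^{\text{rest}}$ are conditionally independent given $\alpha_{T-10}$, and thus
\begin{equation*}
\EE(e^{\lambda w}) \,=\, \EE\bigl[\,\EE(e^{\lambda w^{\text{main}}}\mid\alpha_{T-10})\,\EE(e^{\lambda w^{\text{rest}}}\mid\alpha_{T-10})\,\bigr].
\end{equation*}
From \eqref{R1} we already have the matching two-sided estimate $\EE(e^{\lambda w^{\text{main}}})\asymp \epsilon^{-\lambda^2\kappa/8}$ for all $|\lambda|<\lambda_0$, since the principal contribution is a Gaussian with variance of order $T$ and the remainder $E_B$ is uniformly bounded (contributing only a factor $e^{\pm|\lambda|\|E_B\|_\infty}$). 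Hence the theorem reduces to showing that $\EE(e^{\lambda w^{\text{rest}}}\mid\alpha_{T-10}=\alpha)\asymp 1$ uniformly in $\alpha\in(0,2\pi)$ and in $\epsilon$, for $|\lambda|<\lambda_0$.

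The lower bound $\EE(e^{\lambda w^{\text{rest}}}\mid\alpha_{T-10}=\alpha)\gtrsim 1$ is immediate from Jensen's inequality provided the conditional mean $\EE(w^{\text{rest}}\mid\alpha_{T-10}=\alpha)$ is uniformly bounded in $\alpha$ and $\epsilon$; this in turn is a consequence of the control on the conditioned diffusion near the boundary coming from the eigenfunction estimates of the previous subsection. For the upper bound, we invoke the reflection $z\mapsto -\bar z$ of the upper half plane, under which chordal SLE$_\kappa$ is preserved in law (the driving Brownian motion $\sqrt{\kappa}B_t$ is sent to $-\sqrt{\kappa}B_t$, which has the same distribution). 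In the diffusion formulation of Lemma \ref{lemB} this corresponds to the involution $\alpha\mapsto 2\pi-\alpha$: if $\alpha_s$ solves \eqref{eqD} starting from $\alpha_0$, then $2\pi-\alpha_s$ solves the same SDE starting from $2\pi-\alpha_0$, the exit time from $(0,2\pi)$ is the same (so the geometric conditioning is preserved), and the winding integrand $\cot(\alpha_s/2)$ is negated. Therefore $w^{\text{rest}}_\alpha$ has the same law as $-w^{\text{rest}}_{2\pi-\alpha}$, and an upper bound on $\sup_\alpha \EE(e^{\lambda w^{\text{rest}}}\mid\alpha_{T-10}=\alpha)$ valid for $\lambda\in(0,\lambda_0)$ immediately implies the corresponding bound for $\lambda\in(-\lambda_0,0)$ by relabeling $\alpha\leftrightarrow 2\pi-\alpha$.

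Consequently, the theorem is reduced to proving the uniform upper bound $\EE(e^{\lambda w^{\text{rest}}}\mid\alpha_{T-10}=\alpha)\lesssim 1$ for $\lambda\in(0,\lambda_0)$ and all $\alpha\in(0,2\pi)$. The main obstacle in this remaining step is that on the interval $[T-10,\tau]$ the drift of the conditioned diffusion changes character --- shifting from the strong boundary repulsion characteristic of the everlasting-survival process to a strong attraction near the exit --- so that controlling potentially large values of $\cot(\alpha_s/2)$ cannot rely on the eigenfunction machinery of the previous subsection and will require direct probabilistic estimates on the excursions of $\alpha_s$ toward the endpoints.
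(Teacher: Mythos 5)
Your reduction is only half right, and the half that is missing is precisely the content of the claim. The reflection $\alpha\mapsto 2\pi-\alpha$ (equivalently $z\mapsto-\bar z$), which preserves the exit time and negates $\cot(\alpha_s/2)$, does correctly transfer a uniform upper bound for $\lambda\in(0,\lambda_0)$ into a uniform upper bound for $\lambda\in(-\lambda_0,0)$; this is the same symmetry the paper exploits. But for the \emph{lower} bounds $\EE(e^{\lambda w_R}\mid \alpha_{T-10}=\alpha)\gtrsim 1$ you invoke Jensen together with the assertion that $\EE(w_R\mid\alpha_{T-10}=\alpha)$ is uniformly bounded ``as a consequence of the eigenfunction estimates of the previous subsection.'' That justification is not valid: the eigenfunction/absolute-continuity control (Lemmas \ref{Cl1}, \ref{Cl2}, \ref{mainterm}) governs the conditioned diffusion only up to time $T-10$ (or $T-9$), while on $[T-10,\tau]$ the conditioned drift turns attractive and $\cot(\alpha_s/2)$ blows up near the exit, so bounding the conditional mean uniformly over starting points $\alpha$ arbitrarily close to the boundary requires exactly the probabilistic estimates developed afterwards (the $\delta^{1-8/\kappa}$ bound, the $\sin^{8/\kappa}\frac{y}{2}$ density bound, the Bessel-bridge analysis for $\kappa=4$). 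You even acknowledge this in your final paragraph, so as written the lower-bound step rests on an unproven ingredient with an incorrect attribution, and your statement proves ``upper bound for $\lambda>0$ plus a mean bound suffice,'' not the claim.

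The missing observation, which is what the paper's proof supplies, is that the lower bounds come for free from the upper bounds: couple the process started at $\alpha$ with the reflected process started at $2\pi-\alpha$ using $B$ and $-B$, so that the two winding integrands cancel identically and the exit times coincide; then
\begin{equation*}
1 \;=\; \EE\exp\Bigl(\lambda\!\int_{T-10}^{\tau}\bigl(\cot\tfrac{\alpha_1(s)}{2}+\cot\tfrac{\alpha_2(s)}{2}\bigr)ds\Bigr)
\;\leq\; \Bigl[\EE e^{2\lambda w_R(\alpha)}\Bigr]^{1/2}\Bigl[\EE e^{2\lambda w_R(2\pi-\alpha)}\Bigr]^{1/2},
\end{equation*}
so a uniform (over starting points) upper bound for positive $\lambda$ immediately yields a uniform lower bound, and negative $\lambda$ follows by the same symmetry. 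Equivalently, $\EE e^{\lambda w}\geq 1/\EE e^{-\lambda w}$ by Cauchy--Schwarz. Your skeleton could also be repaired without the coupling by noting that once you have uniform upper bounds for both signs of $\lambda$ (which your reflection step gives), Jensen bounds the conditional mean above \emph{and} below, and then your Jensen lower bound goes through; but in either case the mean bound must be derived from the upper bounds rather than from the eigenfunction machinery.
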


\begin{proof}
Indeed, firstly, it is easy to see that uniform upper (lower) bounds on exponential moments for $\lambda > 0$ give also lower (upper) bounds for $\lambda < 0$. 

Secondly, notice that the processes starting from $a$ and $2\pi -a$ are symmetric with respect to $\pi$, but $\cot\frac{x}{2}$ is antisymmetric. Hence we can couple processes $\alpha_1$ and $\alpha_2$ starting from $a$ and $2\pi - a$ by using the Brownian motion $B_t$ and $-B_t$ such that $\cot \frac{\alpha_1(s)}{2} + \cot \frac{\alpha_2(s)}{2} = 0$.

Hence an uniform lower bound on the positive exponential moments of $\int \cot\frac{x}{2}$ starting from $2\pi$, is via Cauchy-Schwarz equivalent to an uniform upper bound on the exponential moments and vice versa. Indeed, we can write 
\[1 = \EE\exp\left(\lambda \int_{T-10}^{\tau} \cot \frac{\alpha_1(s)}{2} + \cot \frac{\alpha_2(s)} ds\right)\]
and then Cauchy Schwarz to get
\[1 \leq \left[\EE\exp\left(2\lambda \int_{T-10}^{\tau} \cot \frac{\alpha_1(s)}{2} ds\right)\right]^{1/2}\left[\EE\exp\left(2\lambda \int_{T-10}^{\tau} \cot \frac{\alpha_2(s)}{2} ds\right)\right]^{1/2}\]

 Thus the claim follows.
\end{proof}

Now we have to treat separately cases $\kappa \neq 4$ and $\kappa = 4$. For the former, we will first discuss how to obtain a bound on the exponential moments from the time $T-9$ onwards, then deal with the middle part, i.e. the time interval $[T-10,T-9]$, and finally put them together to obtain control over the whole remaining part. Thereafter we handle the case $\kappa = 4$ in a more direct manner.

\paragraph{Control over the interval $[T-9, \tau]$ for $\kappa \neq 4$}\mbox{}\\
Suppose that at time $T-9$ we are at some point $\delta > 0$. Then the process onwards is given by the initial diffusion conditioned to die between $9 < \tau \leq 9+c$. Recall that the initial diffusion equation \eqref{eqD} has a unique strong solution and so we can work with respect to the filtration of the corresponding Brownian motion $B_t$.

Consider the exponential martingale $\exp(\lambda B_t - \lambda^2t/2)$ and the bounded stopping time $\tau' = (9+c) \wedge \tau$. We can use the optional stopping theorem to get $\EE(\exp(\lambda B_{\tau'} - \lambda^2\tau'/2)) = 1$. But on the other hand, we know that as $\alpha_s$ remains always bounded, then from the initial diffusion equation \ref{eqD} it follows that $\int_0^{\tau'} \cot \frac{\alpha_s}{2}ds = \frac{2\sqrt{\kappa}}{\kappa -4}B_{\tau'} + C'$ with $C'$ random, but in $[0, 2\pi]$.
Thus we have
\[\EE\exp(\lambda \int_0^{\tau'} \cot \frac{\alpha_s}{2}ds) \lesssim\EE\exp(\frac{4\kappa}{(\kappa-4)^2}\lambda^2\tau'/2) \lesssim \EE\exp(\frac{4\kappa}{(\kappa-4)^2}\lambda^2(9+c)/2)\] 
where the implied constants depend on $\lambda, \kappa$. Hence for any event $F$ 
\[\EE\exp(\lambda \int_0^{\tau'} \cot \frac{\alpha_s}{2}ds)|F)\PP(F) \lesssim \EE\exp(\frac{4\kappa}{(\kappa-4)^2}\lambda^2(9+c)/2)\]
In particular, we can choose the event $F = \{9 < \tau \leq 9 +c\}$. Recall from the proof of lemma \ref{mainterm} that this is of order $O(\delta^{8/\kappa - 1})$. And thus forgetting the dependence on fixed $\lambda, c,\kappa$ we get an upper bound of order $O(\delta^{1-8/\kappa})$ on the exponential moments. This of course in case we should be at $\delta$ at time $T-9$. Notice that this way we get an uniform bound for any $\delta > \delta_0$. Unfortunately this blows up as $\delta_0 \downarrow 0$.

However, if we were able to well control the probability of being below $\delta_0$ at time $T-9$ independently of the position at time $T-10$, we would stand some hope. This is indeed our plan. As is clear from the proof of lemma \ref{mainterm}, absolute continuity with respect to everlasting survival process \eqref{eqSur} lasts nicely also up to time $T-9$ (with a slightly worse constant). Following \cite{BASS} and \cite{PINSKY}, the transition probabilities for this everlasting survival process are given by $\PP_x(\alpha_t^\infty \in dy) \lesssim \sin^{\frac{8}{\kappa}}\frac{y}{2}dy$ for any $t > 0$ and thus surely at $t = 1$. 

Let us ask whether this is enough sufficient to get a nice bound: first, from absolute continuity, our conditioned process will have probability $O(\delta^{8m/\kappa+1})$ to be in the interval $[\delta^{m+1},\delta^m]$ at time $10$; second, taking the above expectation over all possible intervals of this form, we get a geometric sum of terms $O(\delta^{8m/\kappa+1}\delta^{m(1-8/\kappa)})=O(\delta^{m+1})$ which has a finite value. Thus everything looks nice. When we put things together in the end of the subsection it is cleaner to condition on the exact position of the diffusion at time $T-9$, but this just replaces sums by integrals and everything remains nicely bounded.

\paragraph{Control over the interval $[T-10, T-9]$ for $\kappa \neq 4$}\mbox{}\\
Now we deal with the small remaining part from $T-10$ to $T-9$. Again, as over this time window the process is absolutely continuous with respect to the process conditioned on everlasting survival given by \eqref{eqSur}, it is sufficient to bound exponential moments for the latter. 

It might seem that we also have an additional conditioning pushing the endpoints to lie in an interval $[\delta^{m+1},\delta^m]$. However, in fact when putting the remaining part together in the next paragraph, we will get rid of this dependence. Hence we need to just control the exponential moments independently of the starting point at $T-10$ for the process that is conditioned on the everlasting survival. Now as $\cot\frac{x}{2}$ is decreasing in $[0, 2\pi]$, then from stochastic coupling of different trajectories using the same Brownian motions, one can see that the exponential moments $\EE\exp(\lambda \int_{T-10}^{T-9} \cot \frac{\alpha_s}{2}ds)$ are bounded by those coming from the process that starts at the point $0$. 

Finally, recall the form of the everlasting survival process \eqref{eqSur}:
\begin{equation*}
d\alpha^\infty_s = \sqrt{\kappa}dB_s + 2\cot \frac{\alpha^\infty_s}{2}ds
\end{equation*}
It follows that we can write the exponential moments of $\int_0^11  \cot \frac{\alpha^\infty_s}{2}$ as above using the Brownian part:
\[\int_0^1 \cot \frac{\alpha^\infty_s}{2}ds = \frac{\sqrt{\kappa}}{2}B_1 + C'\]
with $C'$ random, but in $[0, 2\pi]$ and conclude that the exponential moments are finite, independent of where the process is at the time $T-10$. 

\paragraph{Putting the remaining part together for $\kappa \neq 4$}\mbox{}\\
Recall that the main part from the winding came from the time interval $I_1=[0,T-10]$. Additional error terms come from intervals $I_2 = [T-10,T-9]$ and $I_3 = [T-9,\tau]$. As the winding is given as an integral over time, we can decompose the winding over the remaining part $R = I_2 \cup I_3$ as 
$w_R = w_{I_2} + w_{I_3}$. Denoting by $\mathcal{F}_{I_1}$ the filtration of the underlying Brownian Motion up to to time $T-10$ , we can write the contribution of the remaining part as:
\[\EE(e^{\lambda (w_{I_2} + w_{I_3})}|\mathcal{F}_{I_1})\]
For now this is a random variable. We Cauchy-Schwarz the expectation to get rid of the dependence at the point $T-9$ and gain an upper bound
\[\EE(e^{\lambda (w_{I_2} + w_{I_3})}|\mathcal{F}_{I_1}) \leq \EE(e^{2\lambda w_{I_2}}|\mathcal{F}_{I_1})^{1/2}\EE(e^{2\lambda w_{I_3}}|\mathcal{F}_{I_1})^{1/2}\]

Now, start from the first term. As the conditioned process is a nice Markov process, what happens over the time interval $I_2 = [T-10, T-9]$ depends on the filtration $\mathcal{F}_{I_1}$ only through its position at the time $T-10$. But we saw that the positive exponential moments over $I_2$ have uniform bounds independent of the location of the process at time $T-10$. Thus: 
\[\EE(e^{\lambda (w_{I_2} + w_{I_3})}|\mathcal{F}_{I_1}) \lesssim \EE(e^{2\lambda w_{I_3}}|\mathcal{F}_{I_1})^{1/2}\]
For the second term, we condition further on the value of $\alpha_{T-9}$:
\[\EE(e^{2\lambda w_{I_3}}|\mathcal{F}_{I_1}) = \EE(\EE(e^{2\lambda w_{I_3}}|\alpha_{T-9})|\mathcal{F}_{I_1})\] 
In the discussion above we saw that 
\[\EE(e^{2\lambda w_{I_3}}|\alpha_{9}) \lesssim \alpha_{T-9}^{1-8/\kappa}\]
Thus 
\[\EE(e^{2\lambda w_{I_3}}|\mathcal{F}_{I_1}) \lesssim \EE(\alpha_{T-9}^{1-8/\kappa}|\mathcal{F}_{I_1})\]
Also, as argued above, the density of $\alpha_{T-9}$ satisfies $\PP_x(\alpha_{T-9} \in dy) \lesssim \sin^{\frac{8}{\kappa}}\frac{y}{2}dy$ independently of the starting point at $T-10$. Thus the expectation is nicely finite and indeed, putting everything together
\[\EE(e^{\lambda w_R}|\mathcal{F}_{I_1}) = O(1)\] 
where now the implied constant is deterministic. 

\paragraph{Remaining part for $\kappa = 4$}\mbox{}\\
Although the above strategy fails for $\kappa = 4$, the diffusion itself is simpler: the drift term in \eqref{eqD} vanishes and the unconditioned process is really just twice a standard Brownian motion. As we are just aiming for bounds of exponential moments, we can well assume that we have the standard Brownian motion, denote it by $B_t$.

As above we aim to find upper bounds for positive ($\lambda > 0$) exponential moments:
 
\[\EE\exp(\lambda \int_0^\tau \cot \frac{B_s}{2}ds)|\tau \in [10,10+c)\]

Start by noticing that in the space interval $[0, 2\pi]$ we have $\cot{\frac{x}{2}} \leq \frac{4}{x}$. Thus it suffices to bound

\[\EE\exp(\lambda \int_0^\tau \frac{1}{B_s}ds)|\tau \in [10,10+c)\]

Next we separate cases $B_\tau = 0$ and $B_\tau = 2\pi$. The latter case is simple, as conditioned on $B_\tau = 2\pi$, we have a Bessel-3 process. With positive probability this process reaches $2\pi$ in the time interval $[10,10+c)$. Thus it suffices to bound just the relevant exponential moments for a Bessel-3 process starting from a point in $[0,2\pi]$. This we can again do by studying the relevant SDE as above for case $\kappa \neq 4$. The SDE of Bessel-3 is given by

\[d\rho_t = dB_t + \frac{1}{\rho_t}dt\]

Writing $\tau' = \tau \wedge 10+c$, we have

\[\EE\exp(\lambda \int_0^{\tau'} \frac{1}{\rho_s}ds) \lesssim \EE e^{\lambda\rho_{\tau'}}\]

Thus, as the exponential moments for Bessel processes on the LHS certainly exist \cite{revuzyor}, we have the desired upper bound.

For the case $B_\tau = 0$ we need a bit more. Here, the idea is to condition on the exact values of exit times $\tau \in [10, 10+c)$ to obtain a family of Brownian excursions of fixed length and to gain control over these excursions. In other words, we want to write

\begin{equation}\label{kappa4}
\EE\left[\exp(\lambda \int_0^\tau \frac{1}{B_s}ds)|\tau \in [10,10+c), B_\tau = 0\right] = \EE\left[\II_{\tau \in [10,10+c)}\EE(\exp(\lambda \int_0^\tau \frac{1}{B_s}ds)|\tau,B_\tau = 0)\right]
\end{equation}

and study $\EE(\exp(\lambda \int_0^\tau \frac{1}{B_s}ds)|\tau,B_\tau = 0)$.

First, notice that by stochastic coupling using the same Brownian motion, we can certainly consider the starting point also to be at $0$. 
How to describe this conditioned process? We are conditioning on two events: 1) the process being back at zero at $\tau$ and 2) remaining inside the interval for $0 < t < \tau$. Now, as is well known, the Brownian excursion measure can defined as a limit of nicely defined conditional measures. Also, the second event has positive probability in all of the considered measures. Thus we can condition in any order. In particular we can obtain our conditioned process by taking a Brownian excursion and conditioning it to be lower than $2\pi$. Now this latter conditioning has positive probability, and so proving an upper bound on the exponential moments over the usual excursions suffices our needs. 

To control the integral over the Brownian excursion over time $[0,1]$, recall that the scaled Brownian excursion is in fact just a Bessel-3 bridge with the following SDE \cite{revuzyor}:

\[d\rho_t = dB_t + \frac{1}{\rho_t}dt - \frac{\rho_t}{1-t}dt\]

Then, as above we can write 

\[\int_0^{0.5} \frac{1}{\rho_t}dt = B_t + \int_0^{0.5}\frac{\rho_t}{1-t}dt\]

Thus denoting by $M^*$ the maximum of the Bessel bridge in $[0,1]$, we have for some positive constant $c$:

\[\EE\exp(\lambda \int_0^{0.5} \frac{1}{\rho_s}ds) \leq e^{\lambda^2/8}\EE e^{\lambda c M^*}\]

But this maximum of the Bessel 3-bridge is below the maximum of the usual Bessel 3-process in $[0,1]$, and for the latter all exponential moments exist \cite{revuzyor}. Thus $\EE\exp(\lambda \int_0^{0.5} \frac{1}{\rho_s}ds) = O(1)$. As the bridge is symmetric, it also follows that: $\EE\exp(\lambda \int_{0.5}^1 \frac{1}{\rho_s}ds) = O(1)$

Finally, by Cauchy-Schwarz we have 

\[\EE\exp(\lambda \int_0^1 \frac{1}{\rho_s}ds) = O(1)\]

Hence we have showed the existence on the relevant exponential moments over the Bessel-3 bridges of length $1$. But by scaling this amounts to the existence of these moments for all bridges of fixed lengths in $[10, 10 +c]$. Moreover these bounds are all dominated by those of the longest bridge. Thus we can uniformly uper bound the term $\EE(exp(\lambda \int_0^\tau \frac{1}{B_s}ds)|\tau,B_\tau = 0)$m in \eqref{kappa4} and obtain also $O(1)$  error bound for $\kappa = 4$ uniformly over the starting point of the error interval.

\paragraph{Negative exponential moments and lower bounds}\mbox{}\\
Finally, recall that by claim \ref{PN} in the beginning of this section, the work above for positive exponential moments also implies the upper bound for $\lambda < 0$ and lower bounds for all exponential moments. In other words we have shown that
\begin{equation}\label{R2}
 \EE(e^{\lambda w_R}|\mathcal{F}_{I_1}) \asymp 1
\end{equation}
with no randomness on the RHS. Here the implied constants depend on $\lambda, \kappa$ and can be chosen to be uniform for $\abs{\lambda} < \lambda_0$ for any choice of $\lambda_0 > 0$.

\subsubsection{The final result}
Now we individually controlled the exponential moments over time intervals $I_1 = [0,T-10]$ and $R= [T-10,\tau]$. There is one moment of dependency between them at time point $T-10$, but this does no harm as our control over the remaining part was uniform. We can write the winding as a sum over the time intervals:
\[w = w_{I_1} + w_{R}\]
Thus the exponential moments are given by 
\[\EE(e^{\lambda w}) = \EE(e^{\lambda (w_{I_1} + w_R)})\]
where we already consider the expectation with respect the common conditioning of $\tau \in [T, T+c]$. It remains then to condition out the first part:
\[\EE(e^{\lambda w}) = \EE(e^{\lambda w_{I_1}}\EE(e^{\lambda w_R}|\mathcal{F}_{I_1}))\]
where $\mathcal{F}_{I_1}$ as above denotes the filtration of the underlying BM up to the end of the first time interval.
From \eqref{R2} we know that the second term only can be adds a uniformly bounded by a deterministic constant both from above and below. Thus the proposition follows from plugging in the derived form \eqref{R1} for the first term.
\end{proof}

\begin{remark}
Of course this proof method works in a much wider context of conditioned diffusions, hence we hope it could be of some independent interest as well. 
\end{remark}

\section{Expected quantum Minkowski dimension of the SLE$_\kappa$ flow lines}

In this section we aim to find the exact expected quantum Minkowski dimension of the SLE$_\kappa$ flow lines and show that this does not satisfy the KPZ relation and to deduce that the almost sure Hausdorff version of the KPZ relation is not satisfied either. For technical reasons we now consider the unit disc as our underlying domain. 

The main result can be then stated as follows:

\begin{theorem}\label{thmD}
Consider the Liouville measure with $\gamma < 2$ in the unit disc and let $0 < \kappa < 8$. Then the expected quantum Minkowski dimension of the SLE$_\kappa$ flow lines is given by $q_{M,E} < 1$ satisfying
\[d_M = (2+\gamma^2/2)q_{M,E} - \gamma^2(1-\kappa/4)^2q_{M,E}^2/2\]
where $d_M$ is the Minkowski dimension of the respective SLE curve. 

\end{theorem}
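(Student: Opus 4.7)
The plan is to condition on the SLE first, exploiting the flow line coupling of Theorem~\ref{thmC}: after sampling the SLE$_\kappa$, the GFF is a zero-boundary GFF on the slit domain $H_T$ plus a harmonic correction $C_h - \chi\arg f'_T$ with $\chi = 2/\sqrt{\kappa}-\sqrt{\kappa}/2$. The Liouville measure therefore carries a multiplicative winding factor $e^{-\gamma\chi w(z)}$, where $w(z) = \arg f'_T(z)$ is the SLE winding. I would cover the SLE by a Whitney decomposition of $H_T$ adapted to \emph{conformal radius} rather than Euclidean distance: Whitney squares $W$ with side length $\epsilon := l(W) \asymp \CR(z,H_T)$ for $z \in W$. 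For a dyadic square $S$ of side $r = 2^{-n}$ hitting the SLE and for $q \in (0,1]$, subadditivity of $x \mapsto x^q$ gives the upper bound $\mu(S)^q \leq \sum_{W \subset S} \mu(W)^q$, while $\mu(S)^q \geq \mu(W)^q$ for any single Whitney $W$ supplies the matching lower bound.

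The central step is the conditional moment
\[
\EE\bigl[\mu(W)^q \,\big|\, \mathrm{SLE}\bigr] \asymp \epsilon^{(2+\gamma^2/2)q}\, e^{-\gamma\chi q\, w(z_W)}.
\]
Crucially, there is \emph{no} additional $\gamma^2 q^2/2$ Duplantier--Sheffield correction. Conformally mapping $H_T$ to $\mathbb{D}$ by $\phi$, Koebe's distortion gives $|(\phi^{-1})'| \asymp \epsilon$ throughout the image $\phi(W)$, which is a fixed set of unit size in $\mathbb{D}$; the Liouville measure transforms by $|(\phi^{-1})'|^{2+\gamma^2/2} \asymp \epsilon^{2+\gamma^2/2}$, and the pushed-forward field is a zero-boundary GFF on $\mathbb{D}$ whose Liouville mass on $\phi(W)$ has bounded $q$-th moment. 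Equivalently, the GFF variance inside $W$ at scale $\delta$ is only $\log(\CR/\delta) = \log(\epsilon/\delta)$ and not $\log(1/\delta)$, so the log-normal moment yields a multiplicative constant only. The factor $e^{-\gamma\chi q w}$ comes out of the integral because $\arg f'_T$ varies by $O(1)$ on $W$ (again by Koebe, applied to $\log f'_T$), and the bounded piece $C_h$ is absorbed into multiplicative constants.

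Averaging over the SLE combines two inputs. The one-point function (the first moment argument for the dimension of SLE in \cite{BEF}, phrased through the diffusion of Lemma~\ref{lemB}) gives $\PP[\CR(z,H_T) \in [\epsilon, 2\epsilon]] \asymp \epsilon^{1-\kappa/8} = \epsilon^{2-d_M}$. Theorem~\ref{thmW} then gives, under this conditioning, $\EE[e^{\lambda w(z)}] \asymp \epsilon^{-\lambda^2 \kappa/8}$, applied with $\lambda = -\gamma\chi q$. Multiplying by the density $1/\epsilon^2$ of potential Whitney positions at scale $\epsilon$ yields
\[
\EE\Bigl[\sum_{W:\, l(W)\asymp \epsilon} \mu(W)^q\Bigr] \asymp \epsilon^{\alpha - d_M}, \qquad \alpha := (2+\gamma^2/2)q - (\gamma\chi q)^2 \kappa/8.
\]
The algebraic identity $\chi^2\kappa/8 = (1-\kappa/4)^2/2$ rewrites $(\gamma\chi q)^2 \kappa/8 = \gamma^2(1-\kappa/4)^2 q^2/2$, so $\alpha$ is exactly the right-hand side of the claimed KPZ-type relation. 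Summing over dyadic scales $\epsilon = 2^{-k}r$ for $k \geq 0$, the total expected content is bounded as $r \to 0$ iff $\alpha > d_M$; for $\alpha < d_M$, the parallel lower bound using $\mu(S)^q \geq \mu(W)^q$ for a single Whitney of scale $\sim r$ per $S$ diverges. The threshold $\alpha = d_M$ is precisely the equation $d_M = (2+\gamma^2/2)q_{M,E} - \gamma^2(1-\kappa/4)^2 q_{M,E}^2/2$.

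The principal difficulty will be rigorously justifying $\EE[\mu(W)^q \mid \mathrm{SLE}] \asymp \epsilon^{(2+\gamma^2/2)q}$ without the usual $-\gamma^2 q^2/2$ correction, which requires careful tracking of the circle-average regularization under the conformal map $\phi$; the cleanest route uses the identity $\mu_\delta(z) = \CR(z)^{\gamma^2/2} M_\delta(z)$ relating the DS measure to the variance-renormalized Gaussian multiplicative chaos on $W$, where scaling follows from the standard Kahane theory applied after rescaling by $\epsilon$. Secondary technical points include uniformity of the constants in Theorem~\ref{thmW} across the many Whitney squares considered (valid because the theorem gives uniformity for $|\lambda|$ bounded, and transports to the unit disc as noted in the remark after that theorem), and, for $\kappa \in (4,8)$, handling the non-simple SLE where the slit domain has multiple components and the winding must be interpreted through the limit of Definition~\ref{defW}.
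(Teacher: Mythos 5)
Your plan reproduces the paper's strategy almost step for step: a conformal-radius Whitney decomposition of the slit domain, the factorization of the per-square quantum mass into $l(W)^{(2+\gamma^2/2)q}$ times the winding factor $e^{-\gamma\chi q\,w(z_W)}$ (with the winding constant up to $O(1)$ on a Whitney square), the winding moments of Theorem~\ref{thmW} applied with $\lambda=-\gamma\chi q$, the one-point estimate $\PP[\CR(z,H_T)\asymp\epsilon]\asymp\epsilon^{1-\kappa/8}$, and the threshold computation via $\chi^2\kappa/8=(1-\kappa/4)^2/2$. Your route to the per-square moment (coordinate change to $\mathbb{D}$ plus Koebe distortion) is a legitimate variant of the paper's treatment, which instead uses Jensen for the upper direction and an additive estimate on the Green's function inside a Whitney square combined with Kahane convexity for the lower direction; both exploit the same fact, namely that at conformal-radius-comparable distance from the boundary the circle-average variance is $\log(\CR/\delta)$ rather than $\log(1/\delta)$, so no quadratic correction appears.

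The genuine gap is in the upper-bound covering step. The inequality $\mu(S)^q\le\sum_{W\subset S}\mu(W)^q$ presupposes $\mu(S)\le\sum_{W\subset S}\mu(W)$, i.e.\ that the flow line itself carries no Liouville mass, since the CR-Whitney squares tile only the slit domain and never touch the curve. Because the curve is measurable with respect to the field, you cannot dismiss this by saying the trace is Lebesgue-null: the Liouville measure does charge Lebesgue-null, field-dependent sets ($\gamma$-thick points carry full mass while having dimension less than $2$). The paper has to prove this as a separate statement (claim~\ref{FLZ}), via a ``no loss of mass'' argument: split $\EE\mu_\epsilon(z)$ according to whether the curve enters $B_{\epsilon^m}(z)$, bound the near-curve contribution by Cauchy--Schwarz using $\bigl(\EE\mu_\epsilon(z)^2\bigr)^{1/2}\lesssim\epsilon^{-\gamma^2/2}$ together with $\PP\bigl(\mathrm{SLE}\cap B_{\epsilon^m}(z)\neq\emptyset\bigr)\lesssim\epsilon^{m(1-\kappa/8)}$, and take $m$ large. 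Without this your upper bound is unjustified. A smaller omission of the same flavour sits in the lower bound: for each dyadic square $S$ counted as hit you need an actual CR-Whitney square of scale comparable to $l(S)$ \emph{inside} $S$; the paper arranges this by conditioning the conformal radius of the centre of $S$ into a window $[l(S)/3,l(S)/2]$ (still of probability $\asymp l(S)^{1-\kappa/8}$ by the SLE Green's function) and relaxing the Whitney constants so that the sub-squares adjacent to the centre qualify. Your phrase ``a single Whitney of scale $\sim r$ per $S$'' needs such a construction, since for a square merely hit by the curve no comparable-scale Whitney square need exist inside it.
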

Hence for $0 < \kappa < 8$ the KPZ relation is not satisfied for the expected Minkowski dimension. And from proposition \ref{MinHff}, we straight away deduce that:

\begin{corollary}\label{noKPZ}
Consider the Liouville measure with $\gamma < 2$ in the unit disc and let $0 < \kappa < 8$. Then almost surely the quantum Hausdorff dimension for the flow lines SLE$_\kappa$ is below the dimension predicted by KPZ relation and hence the KPZ relation is not satisfied in the almost sure Hausdorff version. 
\end{corollary}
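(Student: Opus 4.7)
The plan is to derive the corollary directly from Theorem \ref{thmD} combined with Proposition \ref{MinHff}. First I would fix notation: let $q_H^{\mathrm{KPZ}}$ denote the quantum dimension that the classical KPZ relation predicts for the flow line from its Euclidean Minkowski dimension $d_M = 1 + \kappa/8$, namely the root in $[0,1]$ of
\[
d_M = (2+\gamma^2/2)\,q - \gamma^2 q^2/2,
\]
while Theorem \ref{thmD} identifies the expected quantum Minkowski dimension $q_{M,E}$ of the flow line as the root in $[0,1]$ of
\[
d_M = (2+\gamma^2/2)\,q - \gamma^2(1-\kappa/4)^2 q^2/2.
\]

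The heart of the argument is a straightforward monotonicity comparison of these two quadratics. For any $c > 0$, the function $f_c(q) = (2+\gamma^2/2)q - c q^2/2$ is strictly increasing on $[0,1]$ as soon as $c \leq 2+\gamma^2/2$; since $\gamma < 2$ we have $\gamma^2 < 2+\gamma^2/2$ and a fortiori $\gamma^2(1-\kappa/4)^2 < 2+\gamma^2/2$, so both functions under consideration are strictly increasing on $[0,1]$. For $0 < \kappa < 8$ one has $(1-\kappa/4)^2 < 1$ strictly, hence $f_{\gamma^2(1-\kappa/4)^2}(q) > f_{\gamma^2}(q)$ pointwise on $(0,1]$. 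Reaching the common value $d_M$ under the modified relation therefore requires a strictly smaller argument than under the classical one, i.e.\ $q_{M,E} < q_H^{\mathrm{KPZ}}$.

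Finally, I would invoke Proposition \ref{MinHff}, which states that even for random sets coupled with the field the quantum Hausdorff dimension $q_H$ is almost surely bounded above by the expected quantum Minkowski dimension. Applied to the flow line (a random set coupled to the Liouville measure via the GFF coupling of Theorem \ref{thmC}), this gives $q_H \leq q_{M,E}$ almost surely, and combined with the previous paragraph, $q_H < q_H^{\mathrm{KPZ}}$ almost surely, which is the claim. I do not foresee any real obstacle: the substantive work is already done in Theorem \ref{thmD} (winding estimates plus the conformal-radius Whitney decomposition) and in Proposition \ref{MinHff}, and this concluding step amounts to a short quadratic comparison together with an invocation of a previously established inequality.
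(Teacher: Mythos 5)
Your argument is correct and is exactly what the paper intends: Theorem \ref{thmD} gives $q_{M,E}$, Proposition \ref{MinHff} bounds $q_H$ by it almost surely, and the monotone comparison of the two quadratics (which the paper leaves implicit behind ``straight away deduce'') shows $q_{M,E}$ lies strictly below the KPZ-predicted value. No gap; you have simply written out the elementary step the paper omits.
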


The intuition behind this result can be gained by comparing the two images on figure \ref{fig1} in the introduction of the paper that illustrate the SLE$_{8/3}$ flow line and level line couplings. Indeed, we saw that zero level lines acted like the boundary of the domain and hence the KPZ relation was not satisfied as the field was considerably lower around them. Now looking at figure \ref{fig1} we can also see that at least for $\kappa$ close to 4, the SLE$_\kappa$ flow lines still stick close to the level line. Hence similarly to the zero level line case, the corresponding quantum contents of the coverings should be smaller and thus the quantum dimension lower.

For $\kappa = 0, \kappa = 8$ we regain the KPZ relation, which is nice but not surprising as $\kappa = 0$ should correspond to a straight line joining zero and infinity, i.e. become independent of the field, and for $\kappa = 8$ the winding part itself should form the whole field. So in some sense their behaviour is "field-independent". Here we provide two illustrative images by Scott Sheffield that indicate what happens when $\kappa$ is near $0$ or $8$:

\begin{figure}[h]
\label{fig2}
\captionsetup{width=0.8\textwidth}
\centering
\begin{minipage}[b]{0.4\textwidth}
\centering
\includegraphics[width=\textwidth]{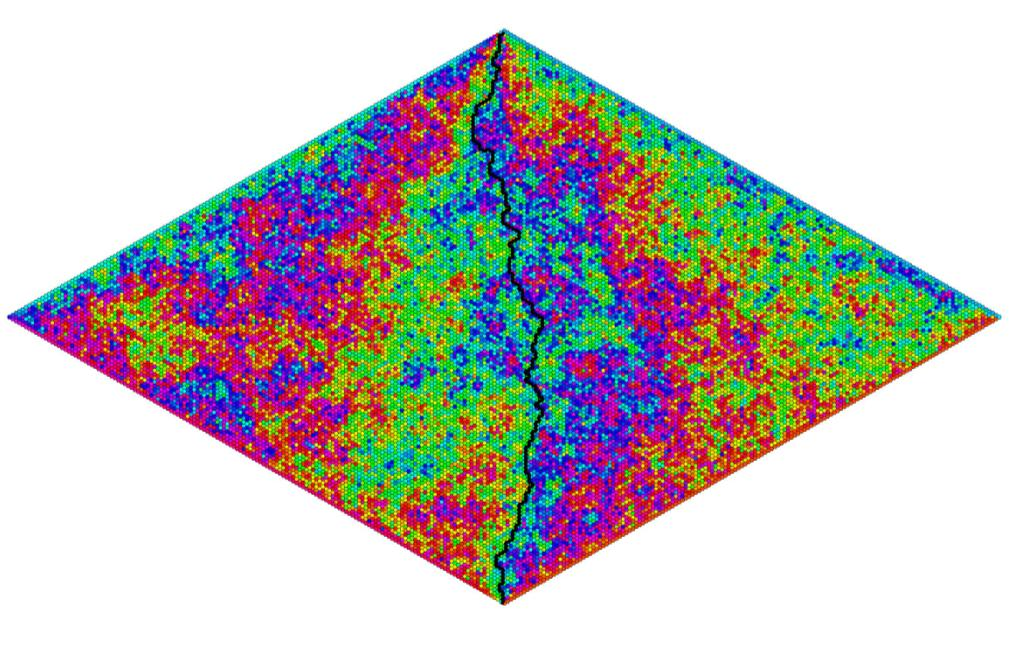}
\end{minipage}
\hspace{0.5cm}
\begin{minipage}[b]{0.4\textwidth}
\centering
\includegraphics[width=\textwidth]{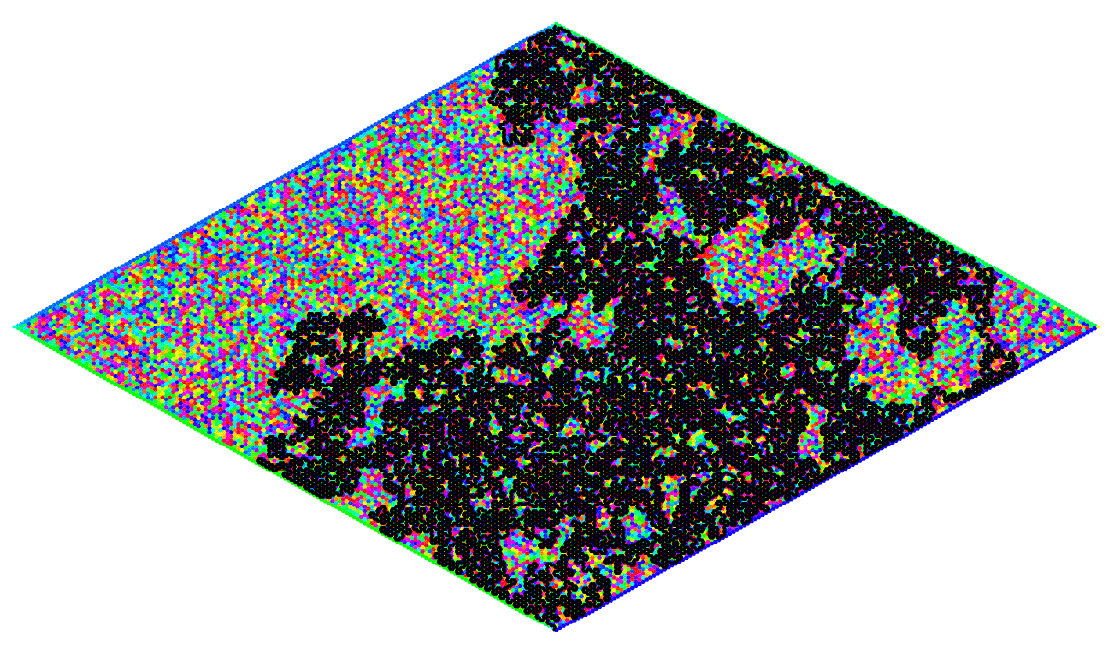}
\end{minipage}
\caption{{\small On the left the flow line corresponding to SLE$_{0.5}$ is represented. Notice that it does not really hold close to the level line anymore, but shoots quite straight from one end-point to the other. On the right we have the SLE$_{7.5}$ flow line. One can see that it starts filling the space, not being too picky about which points to step on.}} 
\end{figure}

\paragraph{Proof strategy}\mbox{}\\
Recall our simple proof strategy for $SLE_4$: cover the curve with balls, look at their scaling using Jensen to bring expectation inside integrals, and conclude. This does not seem to work here. Of course already the fact we also want lower bounds asks for some additional ideas. However, main problems are related to the additional winding term in the coupling theorem \ref{thmC} for the flow lines:

\begin{itemize}
\item First, it is crucial to take averages here over the SLE process to make use of the winding theorem \ref{thmW}. This requires us to (in some sense) fix the covering balls we are working with. Hence also the usefulness of the Minkowski version of the KPZ relation.
\item Second, the fact that winding is not defined on the SLE curve and that we can only calculate it for a specific conditioning poses its constraints.
\item Third, as a minor modification we now need to work with the chordal SLE drawn up to the very end. the underlying domain is then cut into two pieces and it needs some extra care.
\end{itemize}

Our strategy of attack makes use of a variant of the dyadic Whitney decomposition which we call conformal-radius or CR-Whitney decomposition. It allows us at the same time to work off the curve, nicely incorporate the results on winding and still get the necessary information on the fractal geometry of the curve. Whitney decomposition has been also used to study the geometry of the SLE in the beautiful seminal paper by Rohde \& Schramm on basic properties of the SLE \cite{ROHDESCHRAMM}, in particular they used it to provide the correct upper bounds for the Minkowski dimension and thus Hausdorff dimension of traces for the SLE curves.

By using the CR-Whitney decomposition, the proofs of both the upper and lower bounds for the quantum expected Minkowski dimension will follow the same outline. To bound the Minkowski dimension we need to provide bounds for the Liouville measure of a dyadic covering. We will do this in three steps: first, we estimate the expected Liouville measure of a single CR-Whitney square for the SLE slit domain; second, we provide an estimate on the expected Liouville measure over a collection of suitable CR-Whitney squares; and finally, we translate this estimate into an estimate about the combined measure of a dyadic covering.

\subsection{CR-Whitney decomposition}

Recall that dyadic Whitney decomposition of a domain is composed of dyadic squares $Q$ that satisfy: $l(Q) \leq d(Q) \leq 4l(Q)$ where $d(Q)$ is the distance of the square from the boundary of the domain and $l(Q)$ the side-length of the square.One way to achieve a dyadic Whitney decomposition is to just pick all maximal dyadic squares with $d(Q) \geq l(Q)$. The maximality will guarantee the other inequality. See for example \cite{gm} or \cite{ROHDESCHRAMM} for an usage in context. 

It comes however out that it is easier for us not to work with the usual Whitney squares, as this would make incorporating information on winding rather technical. We hence work with a slight modification, where instead of normal distance we use the conformal radius. Thus we define CR-Whitney squares as dyadic squares $Q$ such that they satisfy $4l(Q) \leq \CR(z_0) \leq 12l(Q)$. Notice that here we really condition on the conformal radius of the centre, thus allowing to use the results on winding, i.e. theorem \ref{thmW}. We have an analogous CR-Whitney decomposition, which we state for clarity as a separate lemma.

\begin{lemma}[CR-Whitney decomposition]\label{lemCR}
For every Jordain-domain of the complex plane, we can find a decomposition of dyadic squares such that any $Q \in \mathbb{W}$ satisfies $4l(Q) \leq \CR(z_0) \leq 12l(Q)$, where $\CR(z_0)$ is the conformal radius of the centre of $z_0$ of $Q$, and that the interiors of the squares do not overlap.
\end{lemma}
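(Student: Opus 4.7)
The plan is to mimic the classical dyadic Whitney decomposition of a Jordan domain, but to replace the distance to the boundary by the conformal radius at the center of each square. Koebe's $1/4$ theorem provides the comparability $\tfrac{1}{4}\CR(z,D) \leq d(z,\partial D) \leq \CR(z,D)$ for simply connected $D$, which is precisely what allows the substitution.

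Concretely, I would let $\mathbb{W}$ be the family of dyadic sub-squares $Q \subset D$ that are maximal for the property $\CR(z_Q,D) \geq 4\,l(Q)$, where $z_Q$ denotes the center of $Q$ and maximal means no strictly larger dyadic ancestor of $Q$ satisfies the same inequality. Three things then need to be checked. First, any point $z \in D$ is contained in some $Q \in \mathbb{W}$: in the nested tower of dyadic cubes $Q_n \ni z$, one has $l(Q_n) \to 0$ and $z_{Q_n} \to z$ as $n$ increases (going to small scales), so $\CR(z_{Q_n},D)/l(Q_n) \to \infty$, whereas at very large scales the side length exceeds the bounded conformal radius of the center and the defining inequality fails; hence there is a largest $n$ for which the inequality holds, and that $Q_n$ lies in $\mathbb{W}$. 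Second, interiors of distinct elements of $\mathbb{W}$ do not overlap, because two dyadic squares have disjoint interiors unless one is an ancestor of the other, and this is forbidden by maximality.

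Third, one must verify the two-sided bound $4\,l(Q) \leq \CR(z_Q,D) \leq C\,l(Q)$ for a universal constant $C$. The lower bound is immediate from the defining inequality. For the upper bound, the dyadic parent $P$ of $Q$ fails the defining inequality, so $\CR(z_P,D) < 4\,l(P) = 8\,l(Q)$. Koebe's theorem gives $d(z_P,\partial D) \leq \CR(z_P,D) < 8\,l(Q)$, while $|z_Q - z_P| = l(Q)/\sqrt{2}$; combining the triangle inequality with Koebe applied in the other direction yields $\CR(z_Q,D) \leq 4\,d(z_Q,\partial D) < 4\bigl(8 + 1/\sqrt{2}\bigr)\,l(Q)$, which gives an explicit universal upper constant.

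The constant produced by this straightforward chain of inequalities is larger than $12$; to match the stated bound one either tunes the threshold in the defining property slightly, or one simply records that the specific values $4$ and $12$ are representative, since every subsequent use of the decomposition only requires the comparability $\CR(z_Q,D) \asymp l(Q)$ with universal constants. The only genuine analytic input is Koebe's distortion theorem, and beyond that the argument is a direct transcription of the classical Whitney construction, so I do not foresee any real obstacle.
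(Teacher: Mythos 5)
Your construction is the same as the paper's: take the maximal dyadic squares satisfying $4l(Q)\le \CR(z_Q)$, use Koebe's comparison $\tfrac14\CR(z,D)\le d(z,\partial D)\le \CR(z,D)$, and compare $Q$ with its dyadic parent; the covering and disjointness checks are fine (modulo an indexing slip: since the inequality holds at all small scales, the square you want is the \emph{largest} one in the tower satisfying it, i.e.\ the smallest $n$, not ``the largest $n$ for which the inequality holds''). The one genuine shortfall is the upper constant. Your chain passes through the Euclidean distance twice, $\CR(z_Q)\le 4d(z_Q,\partial D)\le 4\bigl(d(z_P,\partial D)+\abs{z_Q-z_P}\bigr)\le 4\bigl(\CR(z_P)+l(Q)/\sqrt2\bigr)<(32+2\sqrt2)\,l(Q)$, and no tuning of the threshold repairs this: with threshold $c$ the same chain gives an upper constant $8c+2\sqrt2$, so the ratio of upper to lower constant is always at least $8$ and can never reach the stated $12/4=3$. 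The clean way to get $12$ --- and what the paper's one-line computation $\CR(z_Q)\le(8+2\sqrt2)l(Q)$ amounts to --- is to apply the triangle inequality at the level of the conformal radius itself: $\CR(\cdot,D)$ is $4$-Lipschitz (from Koebe distortion, $|f''(0)|\le 4|f'(0)|$ for the uniformizing map, so $|\nabla\,\CR|\le 4$; the segment from $z_Q$ to $z_P$ stays in $D$ since $d(z_Q,\partial D)\ge l(Q)>l(Q)/\sqrt2$), whence $\CR(z_Q)\le \CR(z_P)+4\abs{z_Q-z_P}<8l(Q)+2\sqrt2\,l(Q)\le 12\,l(Q)$. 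You are right that everything downstream only uses $\CR(z_Q)\asymp l(Q)$ with universal constants (the paper itself later relaxes the upper constant to $150$), so the discrepancy is harmless in substance; but as a proof of the lemma as stated, replace the detour through the distance by the Lipschitz bound for the conformal radius.
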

\begin{proof}
Again, pick all maximal dyadic squares satisfying $4l(Q) \leq CR(z_0)$. Then using the triangle inequality and the relation $CR(z_0)/4 \leq d(z_0,\partial D) \leq CR(c_0)$, we arrive that the maximality imposes $CR(z_0) \leq l(8 + 2\sqrt{2}) \leq 12l$.
\end{proof}

It is important for us that we can fully cover the slit domain with CR-Whitney squares. However, we do not actually want to further use the disjointness condition. We would like the event \{$Q$ is a CR-Whitney square\} to be in exact correspondence with conditioning on the conformal radius of its centre and sticking to the disjointness condition would ruin this. 

Hence we stress that from now on, being a CR-Whitney square only means conditioning on its centre to satisfy certain inequalities. 

\subsubsection{An estimate on the Green's function}

To estimate the Liouville measure of a CR-Whitney square, we need tight control on the Green's function inside a CR-Whitney square. This is established in the following lemma, which might be well-known, but we could not locate a concrete reference in the literature. It is similar to Harnack type of inequalities, only that we ask for additive bounds. We state and prove it first for typical Whitney squares.
\begin{lemma}\label{lemG}
Let $D$ be some bounded simply connected domain. Write the Green's function in $D$ in the form $G_D(x,y) = \log \frac{1}{\abs{x-y}} + \tilde{G}_D(x,y)$. Then in any Whitney square with $l(Q) < 1$ of $D$, we have 
\[-\log \frac{1}{d(Q,\partial D)} - C_1\leq \tilde{G}_D(x,y) \leq -\log \frac{1}{d(Q,\partial D)} + C_2\]
for some universal constants $C_1,C_2$.
\end{lemma}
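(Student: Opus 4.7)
The plan is to use conformal invariance of the Green's function to transfer everything to the unit disk, where the Green's function has an explicit form, and then to use Koebe's distortion theorem to control the conformal map uniformly over $Q$. Let $\phi \colon D \to \mathbb{D}$ be a Riemann map, write $x_0$ for the center of $Q$ and $z_0 = \phi(x_0)$. Since $G_D(x,y) = G_{\mathbb{D}}(\phi(x),\phi(y)) = \log\frac{|1-\overline{\phi(x)}\phi(y)|}{|\phi(x)-\phi(y)|}$, we can decompose
\[
\tilde{G}_D(x,y) \;=\; \log \frac{|x-y|}{|\phi(x)-\phi(y)|} \;+\; \log \bigl| 1-\overline{\phi(x)}\phi(y) \bigr|.
\]
The whole task reduces to showing that the first term equals $\log d(Q,\partial D)-\log(1-|z_0|)+O(1)$ and the second equals $\log(1-|z_0|)+O(1)$, so that the $(1-|z_0|)$ pieces cancel and we are left with $\log d(Q,\partial D)+O(1)$.

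For the derivative factor I would invoke Koebe's distortion theorem on the univalent map $\phi$ restricted to the disk $B(x_0, d(x_0,\partial D))\subset D$. Since $Q$ is a Whitney square one has $l(Q)\le d(Q,\partial D)\le d(x_0,\partial D)$, so every point of $Q$ lies inside this Koebe ball at a relative radius $\le \sqrt{2}/2<1$; standard distortion estimates then give $|\phi'(w)|\asymp |\phi'(x_0)|$ for $w\in Q$ with universal constants, hence $|\phi(x)-\phi(y)|\asymp |\phi'(x_0)|\cdot |x-y|$. Koebe's $1/4$ theorem applied to $\phi^{-1}$ at $z_0$ yields $|\phi'(x_0)|\asymp (1-|z_0|)/d(x_0,\partial D)$, and thus the first term equals $-\log|\phi'(x_0)|+O(1) = \log d(x_0,\partial D)-\log(1-|z_0|)+O(1)$.

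For the second term I need $|1-\overline{\phi(x)}\phi(y)|\asymp 1-|z_0|$. The growth part of Koebe's theorem ensures $|\phi(w)-z_0|\le C(1-|z_0|)$ for $w\in Q$ (with $C$ universal), so $1-|\phi(w)|\asymp 1-|z_0|$; then the upper bound $|1-\overline{\phi(x)}\phi(y)|\le (1-|\phi(x)|^2)+|\phi(x)||\phi(x)-\phi(y)|$ and the trivial lower bound $|1-\overline{\phi(x)}\phi(y)|\ge 1-|\phi(x)||\phi(y)|\ge 1-|\phi(x)|$ close the comparison. Adding the two logarithms gives $\tilde G_D(x,y)=\log d(x_0,\partial D)+O(1)$, and since $d(x_0,\partial D)\asymp d(Q,\partial D)$ by the Whitney condition, the lemma follows.

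The main obstacle to turning this into a clean proof is the uniformity of the Koebe estimates: the Whitney square has diameter comparable to (not much smaller than) $d(x_0,\partial D)$, so the constants in the distortion theorem enter at a definite non-infinitesimal radius and one must check they can be made to work simultaneously for both the derivative comparison and the containment of $\phi(Q)$ inside $\{|z-z_0|<\theta(1-|z_0|)\}$ for some fixed $\theta<1$. Once those explicit constants are pinned down (using e.g.\ the Koebe estimate $|\phi(w)-\phi(x_0)|\le |\phi'(x_0)|\,t(1-t)^{-2} d(x_0,\partial D)$ with $t\le \sqrt{2}/2$), everything else is a bookkeeping exercise.
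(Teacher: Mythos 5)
Your route is sound in outline and genuinely different from the paper's. The paper obtains the lower bound directly from the minimum principle (for fixed $x$, $\tilde G_D(x,\cdot)$ is the harmonic extension of $\log\abs{x-\cdot}$ from $\partial D$, which is at least $\log d(Q,\partial D)$ there), and for the upper bound maps the half plane onto $D$, writes $f(z)-f(w)=A(z-w)$ via a complex mean-value device, and controls $\abs{A}/\abs{f'(w)}$ by chaining a bounded number of Whitney squares in both domains. You instead normalize to the unit disc, use the explicit kernel $G_{\mathbb{D}}(z,w)=\log\abs{1-\bar z w}-\log\abs{z-w}$, and treat both bounds symmetrically through Koebe distortion and growth; this is arguably cleaner, and it rests on the same two ingredients the paper uses, namely comparability of the derivative over a Whitney square and the Koebe-type comparison $\abs{\phi'(w)}\,d(w,\partial D)\asymp d(\phi(w),\partial\mathbb{D})$, i.e.\ the paper's estimate \eqref{dist}.

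Two steps need repair, however. First, the inference ``$\abs{\phi'(w)}\asymp\abs{\phi'(x_0)}$ on $Q$, hence $\abs{\phi(x)-\phi(y)}\asymp\abs{\phi'(x_0)}\abs{x-y}$'' is only half justified: integrating $\phi'$ along $[x,y]\subset Q$ gives the upper bound, but a pointwise lower bound on $\abs{\phi'}$ does not exclude cancellation, and you cannot simply apply the growth theorem in the Koebe ball centred at $x$, because $\abs{x-y}$ can be as large as $\sqrt2\,l(Q)$ while $d(x,\partial D)$ may be close to $l(Q)$. The fix uses univalence: centre the Koebe ball at the midpoint $m$ of $[x,y]$ (both endpoints then sit at relative radius at most $\sqrt2/2$), precompose the rescaled map with the disc automorphism sending the preimage of $x$ to $0$, and apply the growth theorem's lower bound; this gives $\abs{\phi(x)-\phi(y)}\geq c\,\abs{\phi'(m)}\abs{x-y}\gtrsim\abs{\phi'(x_0)}\abs{x-y}$ with universal constants. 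Second, the containment $\phi(Q)\subset\{\abs{z-z_0}<\theta(1-\abs{z_0})\}$ with a fixed $\theta<1$, which you flag as the main remaining obstacle, is in general \emph{unattainable}: $Q$ has bounded hyperbolic diameter, but a hyperbolic ball of bounded radius about $z_0$ can have Euclidean diameter equal to a large multiple of $1-\abs{z_0}$, and indeed the explicit Koebe constants you quote give $\theta$ well above $1$. Fortunately it is also unnecessary: by \eqref{dist} applied at $w\in Q$ together with the Whitney property and the derivative comparison, $1-\abs{\phi(w)}=d(\phi(w),\partial\mathbb{D})\asymp\abs{\phi'(w)}d(w,\partial D)\asymp\abs{\phi'(x_0)}d(x_0,\partial D)\asymp 1-\abs{z_0}$, and $\abs{\phi(x)-\phi(y)}\leq\operatorname{diam}\phi(Q)\lesssim 1-\abs{z_0}$; these two facts alone drive your upper and lower bounds for $\abs{1-\overline{\phi(x)}\phi(y)}$, with no constant required to be below $1$. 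With these replacements your argument is complete.
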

However in fact we make use of the following straightforward corollary:
\begin{corollary}
The same holds for CR-Whitney squares with possibly different constants
\end{corollary}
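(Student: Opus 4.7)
The plan is to reduce to the standard Whitney case via Koebe's $1/4$ theorem. Recall that for any simply connected domain $D$ and any interior point $z$, Koebe gives $\CR(z,D)/4 \leq d(z,\partial D) \leq \CR(z,D)$. For a CR-Whitney square $Q$ with center $z_0$, the defining condition $4 l(Q) \leq \CR(z_0) \leq 12 l(Q)$ therefore yields $l(Q) \leq d(z_0,\partial D) \leq 12\, l(Q)$. Combining this with the triangle inequality (the square has diameter $l(Q)\sqrt{2}$), we obtain universal constants $c_1, c_2 > 0$ such that
\[
c_1 l(Q) \leq d(Q,\partial D) \leq c_2 l(Q).
\]
In particular, every CR-Whitney square is a Whitney-type square with only the constants in the defining ratio altered, and $d(Q,\partial D) \asymp \CR(z_0) \asymp l(Q)$.

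Next I would cover $Q$ by a bounded number (say $M$, uniform) of ordinary dyadic Whitney squares $Q_1, \dots, Q_M$ with $l(Q_i) \asymp l(Q)$. Such a decomposition exists precisely because $d(Q,\partial D) \asymp l(Q)$: one may subdivide $Q$ dyadically until each piece is a maximal dyadic square satisfying the usual Whitney condition $l(Q_i) \leq d(Q_i,\partial D) \leq 4 l(Q_i)$, and this subdivision process halts after a bounded number of steps. Each $Q_i$ then satisfies $d(Q_i,\partial D) \asymp d(Q,\partial D)$.

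Given any $x, y \in Q$, choose Whitney squares $Q_i \ni x$ and $Q_j \ni y$ from this cover. If $i=j$, the previous lemma applied to $Q_i$ gives $\tilde G_D(x,y) = -\log(1/d(Q_i,\partial D)) + O(1) = -\log(1/d(Q,\partial D)) + O(1)$, since the two distances are comparable and the comparison contributes only a bounded additive error to the logarithm. If $i \neq j$, I would walk from $Q_i$ to $Q_j$ through a chain of at most $M$ adjacent Whitney squares of the cover; on each square the original lemma gives the required additive control, and the Harnack principle for the harmonic function $y \mapsto \tilde G_D(x,y)$ on the union of boundedly many Whitney squares all of side $\asymp l(Q)$ controls the transitions by an additional $O(1)$.

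The only mild point to check is the transition step: one needs that $\tilde G_D(x,\cdot)$, being harmonic in the slit $D\setminus\{x\}$, does not vary too much across the cover away from $x$. This follows from the standard Harnack inequality applied to $\tilde G_D + \log(1/|x-\cdot|)$ on the $O(1)$ chain of Whitney squares, all of which sit at comparable scale from the boundary. Substituting $d(Q,\partial D) \asymp \CR(z_0)$ then rephrases the bound in terms of the conformal radius with new universal constants $C_1', C_2'$, which is the corollary.
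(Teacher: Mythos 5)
Your first step is right and is essentially the paper's argument: by Koebe's quarter theorem a CR-Whitney square satisfies $d(Q,\partial D)\asymp \CR(z_0)\asymp l(Q)$ with universal constants, so CR-Whitney squares are Whitney-type squares up to a change of constants; the paper phrases the same comparison by noting that each CR-Whitney square is contained in a comparably sized ordinary Whitney square or tiled by boundedly smaller ones. The place where your write-up goes beyond this -- the chain-of-squares step for $x,y$ lying in different covering squares -- is also where it is not sound as stated. Harnack applied to $y\mapsto\tilde G_D(x,y)$ is the wrong tool: that function is of size $\log\frac{1}{d(Q,\partial D)}$, and a multiplicative Harnack constant on such a quantity produces an additive error of order $\log\frac{1}{d(Q,\partial D)}$, not $O(1)$. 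Your patched version, Harnack for the positive harmonic function $G_D(x,\cdot)=\tilde G_D(x,\cdot)+\log\frac{1}{|x-\cdot|}$ away from $x$, can be made to work, but only after two missing ingredients: you need a seed estimate $G_D(x,y_0)\lesssim 1$ at some reference point $y_0$ with $|x-y_0|\asymp l(Q)$ (this is exactly the nontrivial upper bound you are trying to prove, so it must first be obtained from the single-square case and then propagated along a chain avoiding $x$), and you must treat separately points $y$ in an adjacent tile but very close to $x$, where $\log\frac{1}{|x-y|}$ is unbounded and the transfer from $G_D$ to $\tilde G_D$ breaks down (a maximum-principle argument for the harmonic function $\tilde G_D(x,\cdot)$ on a small disc around $x$ repairs this). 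As written, ``controls the transitions by an additional $O(1)$'' is asserting precisely what needs proof.

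The simpler observation, which is what the paper's one-line corollary is really invoking, is that the chain machinery is unnecessary: the proof of lemma \ref{lemG} never uses the specific Whitney constants, only the two-sided comparability $l(Q)\asymp d(Q,\partial D)$. The lower bound is the minimum principle and holds for any square; the upper bound uses only $|x-y|\le\sqrt{2}\,l(Q)\lesssim d(Q,\partial D)$ and the bounded hyperbolic diameter of the disc of radius $l(Q)$ around $z_0$, both of which your Koebe computation already supplies for CR-Whitney squares. So once you have $d(Q,\partial D)\asymp l(Q)$, the lemma's proof applies verbatim with adjusted constants, and no covering or Harnack chain is needed.
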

This indeed follows quickly, as one can for example notice that any CR-Whitney square is either contained in a at most M-times bigger Whitney square or is tiled into at most M-times smaller Whitney squares for some absolute constant M. 
The proof of the lemma itself needs a bit more:
\begin{proof}[Proof of lemma \ref{lemG}]
The left-hand side is simple. For fixed $x$, $\tilde{G}_D(x,y)$ is by definition the harmonic extension to $D$ of $-\log \frac{1}{x-y}$ on $\partial D$. Now we know that a harmonic function inside a bounded domain achieves its minimum on the boundary. Combining this with the fact that the boundary of $D$ is at least at distance $d(Q,\partial D)$ for any $x,y \in Q$, we get the lower bound.

For the right-hand side, i.e. the upper bound, a lengthier argument seems to be needed. We know that the Green's function in the upper half plane is given by 
\[G_{\mathbb{H}}(z,w) = \log \frac{1}{\abs{z-w}} - \log \frac{1}{\abs{z-\overline{w}}}\]
Now pick $f: \HH \rightarrow D$ to be a conformal map and set $x = f(z)$, $y = f(w)$. Then by the conformal invariance of the Green's function, we have
\[\log \frac{1}{\abs{z-w}} - \log \frac{1}{\abs{z-\overline{w}}} = \log \frac{1}{x-y} + \tilde{G}_D(x,y)\]
Now using the complex version of the Mean Value Theorem, write $x - y = f(z) - f(w) = A(z-w)$ where $A = \operatorname{Re}(f'(u)) + i \operatorname{Im}(f'(v))$ for some $u,v$ on the line between $z$ and $w$. Plugging this into the previous equation, we get
\[\tilde{G}_D(x,y) = -\log \frac{1}{\abs{z-\overline{w}}} - \log \frac{1}{\abs{A}}\]
Now using triangle inequality, we have $\abs{z-\overline{w}} \leq \abs{z-w} + 2\operatorname{Im}(w)$.
So using also the definition of $A$ again,
\[\tilde{G}_D(x,y) \leq - \log \frac{\abs{A}}{\abs{x-y}+2\abs{A}\operatorname{Im}(w)} - \log \frac{1}{\abs{A}} = -\log \frac{1}{\abs{x-y} + 2\abs{A}\operatorname{Im}(w)}\]
Now we know that $\abs{x-y} \leq \sqrt{2}l(Q)$. Also, we know that for Whitney squares the side-length is up to fixed multiplicative constants equal to the distance of the boundary. Thus $\abs{x-y} \leq cd(Q,\partial D)$.

Recall that from distortion theorems \cite{POMM} it follows that for $f$ analytic from $D_1 \rightarrow D_2$ we have
\begin{equation}
\label{dist}
\abs{f'(z_0)} \asymp \frac{d(f(z_0),D_2)}{d(z_0,D_1)}
\end{equation}
where the implied constants are absolute. 
Thus we get that 
\[d(Q) \lesssim \operatorname{Im}(w)\abs{f'(w)} \lesssim d(Q))\]
for some absolute constants and hence
\[\tilde{G}_D(x,y) \leq -\log \frac{1}{cd(Q,\partial D)} - \log \frac{\abs{f'(w)}}{\abs{A}} + C\]
for some absolute constant $C$. It finally remains to show an absolute bound on $\abs{A}/\abs{f'(w)}$ to conclude the lemma.

Now, we know that $Q$ can be covered by at most $M$ images of Whitney squares in $\HH$, where $M$ is a universal constant \cite{gm}. Join these $M$ Whitney squares with further Whitney squares in $\HH$ to make the region covered convex, i.e. a big rectangle. The number of these additional squares can again be universally bounded. 

Then $z,w,u,v$ lie inside this region, and as they are only bounded hyperbolic distance apart, the ratio of their imaginary parts is bounded. On the other hand this bounded number of Whitney squares can be in turn covered by a uniformly bounded number of connected Whitney squares in $D$. Thus also the ratios of distances of $f(z),f(w),f(u),f(v)$ from the boundary are bounded by constants. It follows again from the distortion theorems \eqref{dist} that also the ratios of the different $f'(\cdot)$ are bounded, giving us the claim. 
\end{proof}

\subsubsection{Controlling winding inside a CR-Whitney square}

A priori, conditioned on a dyadic square to be a CR-Whitney square we have information on its winding only at the center of the square. This could be a problem, as we have no control on the covariance structure of the winding. However, from the geometric intuition of the winding number, it is clear that inside a CR-Whitney square the winding has to be bounded up to an additive constant. Although the definition of winding in our case is different (see discussion after the statement of theorem \ref{thmW}), this result also holds in our case. Again we state and prove it for more traditional Whitney squares, but use for CR-Whitney squares:

\begin{lemma}\label{WB}
Suppose $Q$ is a Whitney square in the slit domain. Then the winding $\omega(z)$ satisfies $\omega(z) - c \leq \omega(z_0) \leq \omega(z) + c$, where $z_0$ is the centre of the square and $c > 0$ is some absolute constants.
\end{lemma}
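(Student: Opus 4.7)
The plan is to reduce the statement to a direct application of the Koebe distortion theorem. Recall from Definition~\ref{defW} that $\omega(z) = \arg f_T'(z) = \operatorname{Im}\log f_T'(z)$, with the branch of the logarithm fixed by continuity on the slit domain and by the requirement that it tends to $0$ at infinity. Since the component $U$ of the slit domain containing $Q$ is simply connected and $f_T$ is conformal (so $f_T'$ does not vanish on $U$), $\log f_T'$ is a well-defined single-valued holomorphic function on $U$. It therefore suffices to prove that $|\log f_T'(z) - \log f_T'(z_0)|$ is bounded by a universal constant for $z \in Q$, and then to take imaginary parts.

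Next I would set up the geometry. By the Whitney property $l(Q) \leq d(Q,\partial U) \leq 4 l(Q)$, so $R := d(z_0, \partial U) \geq d(Q,\partial U) \geq l(Q)$, whereas every $z \in Q$ satisfies $|z - z_0| \leq \tfrac{\sqrt{2}}{2}\, l(Q) \leq R/\sqrt{2}$. In particular the entire disk $B(z_0, R)$ lies inside $U$, so $f_T$ is univalent on $B(z_0,R)$, and the square $Q$ is contained in the concentric closed disk $\overline{B(z_0,R/\sqrt{2})}$, which sits in a compact subset of $B(z_0,R)$ of bounded hyperbolic diameter.

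I would then invoke a standard consequence of the Koebe distortion theorem: for any univalent function $g$ on the unit disk, $|\log(g'(w)/g'(0))|$ is bounded by a universal constant $C_\rho$ whenever $|w|\leq\rho<1$ (this follows by integrating the bound $|g''(w)/g'(w)|\lesssim (1-|w|)^{-1}$ that Koebe's theorem provides). Rescaling this to $B(z_0,R)$ with $\rho=1/\sqrt{2}$ gives $|\log f_T'(z) - \log f_T'(z_0)|\leq c$ for every $z \in Q$, and taking imaginary parts yields $|\omega(z)-\omega(z_0)|\leq c$. For a CR-Whitney square the same proof applies verbatim, since by the remark used in the proof of Lemma~\ref{lemG} any CR-Whitney square is contained in (or equals a bounded union of) ordinary Whitney squares of comparable side-length, so the constant is only inflated by a universal factor.

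The only real obstacle is to make sure the setup is valid in the $\kappa > 4$ regime, where the SLE curve can disconnect the plane and winding is defined through the limit $\lim_{t\uparrow T'}f_t(z)$. But a Whitney square of the slit domain is at strictly positive distance from the entire boundary, and hence from the SLE curve itself, so $Q$ is wholly contained in one connected component $U$ of the complement of the curve; both $z$ and $z_0$ are then handled by the same conformal map associated to $U$, and no limiting argument is needed inside $Q$. With this observation in place, the argument is routine and uniform in $\kappa$.
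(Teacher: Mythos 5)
Your proof is correct, and it takes a cleaner route than the paper's. The paper introduces $g(z)=\log f_T'(z)-\log f_T'(z_0)$, controls its \emph{real} part $\Re g(z)=\log\bigl(|f_T'(z)|/|f_T'(z_0)|\bigr)$ via bounded hyperbolic diameter and the Koebe distortion estimate $|f'(z_0)|\asymp d(f(z_0),\partial D)/d(z_0,\partial D)$, and then invokes the Borel--Carath\'eodory theorem to promote the one-sided bound on $\Re g$ to a bound on $|g|$. You instead bound $|\log f_T'(z)-\log f_T'(z_0)|$ directly by integrating the Koebe consequence $\bigl|(1-|w|^2)\,g''(w)/g'(w)-2\bar w\bigr|\le 4$ (hence $|g''/g'|\lesssim(1-|w|)^{-1}$) along a radial path, after rescaling $B(z_0,R)\subset U$ to the unit disk. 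This sidesteps Borel--Carath\'eodory entirely and delivers the modulus and argument bounds in one stroke. Both proofs hinge on exactly the same geometric input: the Whitney property places $Q$ inside a disk of fixed relative radius $\rho<1$ on which $f_T$ is univalent. Your remark about $\kappa>4$ is also in the right spirit: because a Whitney square lies at positive distance from the curve, it sits in a single component of the slit domain, so the winding of every $z\in Q$ is evaluated through the same conformal map; the paper handles this implicitly in the same way. The extension to CR-Whitney squares by comparison with ordinary Whitney squares of comparable side-length matches the corollary following the lemma in the paper.
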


\begin{proof}
We start by using the Borel-Carathéodory theorem \cite{TITCH}. In a slightly constrained form it states that the modulus of the analytic function $g(z)$ with $g(0) = 0$ inside a closed disc of radius $r < R$ can be controlled by the maximum of its real part on the circle of radius $R$. More explicitly, we have 
\[\abs{g(z)} \leq \frac{2r}{R-r}\sup_{z \in \partial B(0,R)} \Re g(z)\]
We want to apply this theorem with 
\begin{itemize}
\item $g(z) = \log f_T'(z)-\log f_T'(z_0)$, where $f_T$ is the map from the SLE slit domain back to the upper half plane $\HH$ and $z_0$ is the center of our Whitney square $Q$
\item $r = \frac{l(Q)}{\sqrt{2}}$ and $R = l(Q)$ with $l(Q)$ as before the sidelength of $Q$
\end{itemize}
Firstly, as our domain in question is simply connected and $f_T'(z)$ is non-zero everywhere, it follows that $g(z)$ is analytic. Secondly, the whole square $Q$ fits in the closed disc of radius $r$ and the larger disc still fits into the domain as $d(z_0,\partial H_t) \geq \frac{3l(Q)}{2}$.

Next, we need to control the real part of $g(z)$. This real part is given by 
\[\Re g(z) = \log \frac{\abs{f_T'(z)}}{\abs{f_T'(z_0)}}\]

Now it can be seen that the disc of radius $R$ centred at $z_0$ is of bounded hyperbolic diameter that is independent of the sidelength of the square $l(Q)$ and the domain. Hence by conformal invariance of the hyperbolic distance, also the images $f_T(z)$ and $f_T(z_0)$ are only at bounded hyperbolic distance. It follows from distortion theorems \eqref{dist} that the ratio $\frac{\abs{f_T'(z)}}{\abs{f_T'(z_0)}}$ is bounded by an absolute constant. Thus the same holds for $\Re g(z)$ .  

Finally, the relative change in winding w.r.t $z_0$ is given exactly by the imaginary part of $g(z)$ and the lemma follows.
\end{proof}

\begin{corollary}
The same holds for CR-Whitney squares with a slightly different constant.
\end{corollary}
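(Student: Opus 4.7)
The plan is to reduce the CR-Whitney case directly to the Whitney case of Lemma~\ref{WB}, using the same dyadic comparability trick that was used for the Green's function corollary just above. The key input is the Koebe quarter theorem, which yields $\CR(z_0)/4 \leq d(z_0, \partial D) \leq \CR(z_0)$. Combined with the CR-Whitney condition $4\,l(Q) \leq \CR(z_0) \leq 12\,l(Q)$, this gives $l(Q) \leq d(z_0, \partial D) \leq 12\,l(Q)$, so that the Euclidean distance from the centre $z_0$ of $Q$ to the boundary of the domain is comparable to $l(Q)$ up to absolute constants.

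From this comparability, exactly as observed for the corollary to Lemma~\ref{lemG}, any CR-Whitney square $Q$ is either contained in a standard Whitney square $Q'$ of side length at most $M\,l(Q)$, or can be covered by at most $N$ standard Whitney squares each of side length at least $l(Q)/M$, for absolute constants $M$ and $N$. In the first case I would apply Lemma~\ref{WB} to $Q'$ and obtain, via the triangle inequality at the centres of $Q$ and $Q'$, that $|\omega(z) - \omega(z_0)| \leq 2c$ for every $z \in Q$. In the second, I would chain the Whitney estimate through a bounded number of overlapping tiles joining an arbitrary $z \in Q$ to $z_0$; since the chain has length at most $N$, the final bound is $|\omega(z) - \omega(z_0)| \leq N c$. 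Either way the corollary follows with a slightly worse but still universal constant $c'$.

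The only step that merits any real thought is the dyadic counting giving the constants $M$ and $N$; this is completely standard (passing from the $\CR$-scale to the dyadic-distance scale costs only a fixed factor) and is implicit in the analogous Green's function argument already carried out, so there is no substantive obstacle. As a sanity check, one could equally well just rerun the Borel--Carath\'eodory step in the proof of Lemma~\ref{WB} with the adjusted choices $r = l(Q)/\sqrt{2}$ and $R = 0.9\,l(Q)$: the square $Q$ still sits in the disc of radius $r$, the larger disc of radius $R$ still fits inside the slit domain because $d(z_0, \partial H_t) \geq l(Q) > R$, and the prefactor $2r/(R-r)$ remains a universal constant, so the same conclusion follows directly with a worse explicit constant.
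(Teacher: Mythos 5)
Your proposal is correct and follows the route the paper leaves implicit: the corollary is immediate once one notes that the Borel--Carath\'eodory argument of Lemma~\ref{WB} only used $d(z_0,\partial H_t)\gtrsim l(Q)$, and for a CR-Whitney square the Koebe estimate $\CR(z_0)/4\leq d(z_0,\partial H_t)$ together with $4l(Q)\leq \CR(z_0)$ gives $d(z_0,\partial H_t)\geq l(Q)$, so rerunning the proof with $r=l(Q)/\sqrt{2}$ and $R=0.9\,l(Q)$ yields the same conclusion with a universal (slightly larger) constant. Your alternative reduction via comparison with ordinary Whitney squares and a bounded chaining is also fine and mirrors the paper's justification of the analogous corollary to Lemma~\ref{lemG}, but the direct rerun is the cleaner of the two.
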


\subsection{Proof of the theorem \ref{thmD}}
Now we are set to prove the theorem \ref{thmD}. We start with the upper bound and follow the strategy outlined in the beginning of the section. In all sections we start by sampling an SLE$_\kappa$ and then constructing the Liouville measure in the slit domain, using the coupling results between the GFF and SLE. We make a few remarks that simplify the further work and its write-up
\begin{enumerate}
\item We ignore at all phases the bounded harmonic correction term in the coupling, because it only gives a bounded multiplicative constant as discussed in proof of \ref{LL1}. 
\item As we sample the SLE curve until it cuts the unit disc into two, we are left with two independent GFFs in both subdomains. However we can still consider the Whitney decomposition of the unit disc with the SLE curve, and all estimates for a single Whitney square depend only one one of these GFFs, hence we can also forget about this additional issue.
due. For $\kappa = 4$ one needs to forget about winding and everything will go through. For $\kappa > 4$ one needs to notice that $\chi$ changes sign and additionally take care of sampling GFF independently in every subdomain as explained in remarks after theorem \ref{thmC}. Otherwise everything is exactly the same - indeed, even for points cut-off from infinity by the curve, the winding is defined similarly in the coupling theorem \ref{thmC} and the theorem on winding \ref{thmW}. 
\end{enumerate}

\subsubsection{Upper bound}

\paragraph{Upper bound for a CR-Whitney square}\mbox{}\\
Consider a dyadic square $Q$ of side-length $l(Q)$ and denote by $\mathcal{W}$ the collection of all CR-Whitney squares of the unit disc cut by the SLE curve. We will find an upper bound to
\[\EE_{SLE}\left[\EE_h\left( \tilde{\mu}(Q)^q\right)|Q \in \mathcal{W}\right]\]
where informally $\tilde{\mu}(dz) \asymp \mu(dz)e^{-\gamma \chi w(z)}$ is the Liouville measure now weighted by the winding. This can be given concrete meaning using the circle-average regularization process as in section 4. As winding is harmonic inside the slit domain, then taking the regularization term $\delta_n \leq 0.01l(Q)$, the circle-averages for winding give its value at the centre. Now, from the corollary to lemma \ref{WB} one can see that inside a CR-Whitney square, the winding is equal up to a constant. So setting $z_0$ to be the centre of $Q$ we can write
\[\EE_{SLE}\left[\EE_h\left( \tilde{\mu}(Q)^q\right)|Q \in \mathcal{W}\right] \asymp \EE_{SLE}\left[e^{-\gamma \chi q w(z_0)}\EE_h\left(\mu(Q)^q\right)|Q \in \mathcal{W}\right]\]
Now, with only minor modifications we can use lemma \ref{LL1}, to upper bound the Liouville part without winding and get:
\[\EE_h\left(\mu(Q)^q)\right) \leq l(Q)^{(2+\gamma^2/2)q}\]

So we are left with 
\[\EE_{SLE}\left[\EE_h\left( \tilde{\mu}(Q)^q\right)|Q \in \mathcal{W}\right] \lesssim l(Q)^{(2+\gamma^2/2)q}\EE_{SLE}\left[e^{-\gamma \chi q w(z_0)}|Q \in \mathcal{W}\right]\]
But the as $Q$ has side-length $l(Q)$ and is conditioned to be a CR-Whitney square, we are exactly conditioning the conformal radius $\CR(z_0,SLE) \in [4l(Q), 12l(Q)]$. Hence using the theorem on winding \ref{thmW}, we have 
\[\EE_{SLE}\left[e^{-\gamma \chi q w(z_0)}|Q \in \mathcal{W}\right] \lesssim l(Q)^{-\gamma^2(1-\kappa/4)^2q^2/2}\]
Putting everything together, gives us 
\[\EE_{SLE}\left[\EE_h\left( \tilde{\mu}(Q)^q\right)|Q \in \mathcal{W}\right] \lesssim l(Q)^{(2+\gamma^2/2)q - \gamma^2(1-\kappa/4)^2q^2/2}\]
\paragraph{Upper bound for Liouville measure over all CR-Whitney squares}\mbox{}\\
Next, let $\mathcal{W}_{\geq n}$ denote the collection of Whitney squares of side-length at most $2^{-n}$ we provide an upper bound for the sum
\[\EE_{SLE}\EE_h\left(\sum_{Q \in \mathcal{W}_{\geq n}} \tilde{\mu}(Q)^q\right) = \sum_{Q\in \mathcal{S}_{\geq n}}\EE_{SLE}\left[ \EE_h\left( \tilde{\mu}(Q)^q\right)|Q \in \mathcal{W}\right]\PP_{SLE}(Q \in \mathcal{W})\]
where the sum is over the collection $\mathcal{S}_{\geq n}$ of dyadic squares of side-length at most $2^{-n}$. Now for $Q$ to be a CR-Whitney square, we certainly need its center $z_0(Q)$ to satisfy $\CR(z_0) \leq 12l(Q)$. However, we know from \cite{BEF} that the probability of this happening is bounded by $O(1)l(Q)^{1-\kappa/8}$ and so
\[\PP_{SLE}(Q \in \mathcal{W}) \leq \PP_{SLE}\left[\CR(z_0) \leq 12l(Q)\right] \lesssim l(Q)^{1-\kappa/8}\]
Hence, fixing some $n \in \mathbb{N}$ as the maximal size of the dyadic squares used, and combing this previous estimate with the previous one for CR-Whitney squares, we have that for any $1 > q > 0$, $\delta > 0$ with 
\[(2+\gamma^2/2)q - \gamma^2(1-\kappa/4)^2q^2/2 = 1+\kappa/8 + \delta\] 
the following upper bound bound holds:
\[\EE_{SLE}\EE_h\left(\sum_{Q\ \in \mathcal{W}_{\geq n}} \tilde{\mu}(Q)^q\right) \lesssim \sum_{k \geq n} \sum_{l(Q) = 2^{-k}}2^{2k}2^{-k(2+\delta)} = \frac{2^{-n\delta}}{1-2^\delta}\]
Notice that by making $n$ large enough we can in fact make this sum arbitrarily small.

\paragraph{Almost sure upper bound for the covering}\mbox{}\\
The final step of the proof is inspired by the (not yet published) book of Peres \& Bishop \cite{BP}, where they discuss the notion of dimension related to Whitney decompositions. Suppose we have a covering of the SLE by dyadic squares $S_i \in \mathcal{S}_n$ such that their side-length is $2^{-n}$. The idea is to cover each dyadic squares by $CR$-Whitney squares and obtain an estimation this way for the dyadic covering. The problem is that with Whitney square we never touch the curve itself, so in order to proceed we need the following claim:

\begin{claim}\label{FLZ}
For $\kappa < 8$ the Liouville measure of SLE$_\kappa$ in forward coupling with the GFF is almost surely zero.
\end{claim}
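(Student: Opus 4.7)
The plan is to prove the equivalent statement that $\EE \mu(\mathrm{SLE}) = 0$; since $\mu$ is almost surely a nonnegative finite measure, this gives $\mu(\mathrm{SLE}) = 0$ almost surely.

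The key classical input is the one-point avoidance of SLE$_\kappa$ for $\kappa < 8$: for every interior point $z \in D$ one has $\PP(z \in \mathrm{SLE}) = 0$. This is standard (see \cite{BEF}) and already implicit in Lemma \ref{lemB}, since the governing diffusion started from a strictly interior value in $(0, 2\pi)$ has positive probability to survive every finite time, so the conformal radius $\CR(z, H_t)$ never reaches $0$. In particular, SLE has Lebesgue measure zero a.s., and consequently the circle-average regularizations from Theorem \ref{Liouville} satisfy $\mu_\delta(\mathrm{SLE}) = 0$ deterministically for every $\delta > 0$, equivalently $\mu_\delta(\mathrm{slit}) = \mu_\delta(D)$ a.s.

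Taking expectations and applying Fubini for each fixed $\delta > 0$,
\[
\EE \mu_\delta(\mathrm{slit}) \;=\; \int_D \EE\!\bigl[\mathbbm{1}_{z \in \mathrm{slit}}\,\delta^{\gamma^2/2} e^{\gamma h_\delta(z)}\bigr]\, dz \;=\; \int_D \EE\!\bigl[\delta^{\gamma^2/2} e^{\gamma h_\delta(z)}\bigr]\, dz \;=\; \EE \mu_\delta(D),
\]
since $\mathbbm{1}_{z \in \mathrm{slit}} = 1$ almost surely for each fixed $z$. It then remains to pass $\delta \to 0$: the CR-Whitney decomposition of Lemma \ref{lemCR} writes $\mathrm{slit} = \bigsqcup_{Q \in \mathcal{W}} Q$ as a countable union of sets that, conditional on the SLE, are compact continuity sets of $\mu$, so $\mu_\delta(Q) \to \mu(Q)$ a.s.\ for each $Q$. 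Combining this with $\mu_\delta(D) \to \mu(D)$ a.s.\ and a Fatou/dominated-convergence exchange of sum and limit (justified by $\sum_{Q} \mu_\delta(Q) \le \mu_\delta(D)$ together with the a.s.\ finiteness of $\sup_\delta \mu_\delta(D)$ in the subcritical regime $\gamma < 2$), we obtain $\mu(\mathrm{slit}) = \mu(D)$ a.s., i.e.\ $\mu(\mathrm{SLE}) = 0$ a.s.

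The main obstacle is the dominated-convergence exchange for the countable sum over CR-Whitney squares, since the convergence $\mu_\delta \to \mu$ is a priori only weak and the slit is a random open set. For $\gamma < \sqrt{2}$ this passage is straightforward via $L^2$ convergence on each $Q$; for the full range $\gamma < 2$ one can appeal to a Kahane convexity argument, or alternatively sidestep the issue entirely by directly bounding $\EE \mu(N_\epsilon)$ for the $\epsilon$-neighborhood $N_\epsilon$ of SLE and invoking $\EE \mu(\mathrm{SLE}) = \lim_{\epsilon \to 0} \EE \mu(N_\epsilon)$ (monotone convergence). Indeed, the upper-bound estimates developed in the remainder of the section---the single-CR-Whitney-square bound $\EE_{SLE}[\EE_h \mu(Q)\mid Q \in \mathcal{W}] \lesssim l(Q)^{2+\gamma^2/2 - \gamma^2(1-\kappa/4)^2/2}$ coming from Lemma \ref{LL1} and the winding estimate of Theorem \ref{thmW}, combined with the hitting bound $\PP(Q \in \mathcal{W}) \lesssim l(Q)^{1-\kappa/8}$---yield $\EE \mu(N_\epsilon) = O(\epsilon^{c})$ with $c = 1 - \kappa/8 + \gamma^2\kappa(8-\kappa)/32 > 0$ for all $\kappa \in (0, 8)$ and $\gamma \in (0, 2)$, which concludes the argument.
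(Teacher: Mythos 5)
Both routes you sketch run into the same obstruction, and neither resolves it. In the first route, you reduce the claim to the interchange $\lim_{\delta}\sum_{Q}\mu_\delta(Q)=\sum_{Q}\mu(Q)$ over the countable family of CR-Whitney squares, and you invoke dominated convergence ``justified by $\sum_{Q}\mu_\delta(Q)\le\mu_\delta(D)$ together with the a.s.\ finiteness of $\sup_\delta\mu_\delta(D)$.'' But that is not a valid domination: DCT for the termwise sum requires a $\delta$-uniform, $Q$-summable envelope for each individual $\mu_\delta(Q)$, and a bound on the total mass $\mu_\delta(D)$ controls the sum, not the tails. What you actually need is precisely that no mass escapes to ever-smaller Whitney squares as $\delta\to 0$---which \emph{is} the content of the claim, so the argument is circular. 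In the alternative route you propose, the circularity is even more direct: the CR-Whitney machinery of the section only sees squares strictly inside the slit domain, hence only controls $\EE\mu(N_\epsilon\cap\mathrm{slit})=\EE\mu(\mathcal W_{\ge n})$. Since $N_\epsilon\supset\mathrm{SLE}$ while every Whitney square misses $\mathrm{SLE}$, the estimate $\EE\mu(\mathcal W_{\ge n})=O(2^{-nc})$ gives $\EE\mu(N_\epsilon)=O(\epsilon^c)+\EE\mu(\mathrm{SLE})$, and letting $\epsilon\to 0$ yields the tautology $\EE\mu(\mathrm{SLE})=\EE\mu(\mathrm{SLE})$.

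The paper's own proof avoids this circularity by taking a genuinely different route: it is a global ``no loss of mass'' first-moment argument. One writes $\EE\mu(D)=\int_D\lim_{\epsilon}\EE\mu_\epsilon(z)\,dz$ by Fubini and dominated convergence, then decomposes the integrand according to whether the SLE enters a tiny $\epsilon^m$-ball around $z$, and shows by Cauchy-Schwarz (using the one-arm exponent $\PP(\mathrm{SLE}\cap B_{r}(z)\neq\emptyset)\asymp r^{1-\kappa/8}$ and the second-moment bound $\EE\mu_\epsilon(z)^2\lesssim\epsilon^{-\gamma^2}$) that for $m$ large the near-curve event contributes nothing in the limit. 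In particular it uses neither the Whitney decomposition nor the winding estimate of Theorem \ref{thmW}. If you want to salvage your Whitney-based approach, the missing ingredient is a $\delta$-uniform bound on $\EE_{\mathrm{SLE}}[\EE_h\mu_\delta(Q)\mid Q\in\mathcal W]$ for all $\delta>0$ (including $\delta\gtrsim l(Q)$, where the circle average crosses the curve and for $\kappa\neq 4$ the harmonic correction $\chi\arg f_T'$ is unbounded), summable after weighting by $\PP(Q\in\mathcal W)$; that would legitimize the DCT step, but it is not a consequence of what you have written.
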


Before proving the claim, let us show it implies the upper bound. Consider again the collection of dyadic CR-Whitney squares of side-length at most $2^{-n}$, denoted by $\mathcal{W}_{\geq n}$ and a dyadic square $S_i \in \mathcal{S}_n$ intersecting the SLE curve. Recall that the $CR$-Whitney squares cover the whole slit domain, also notice that no CR-Whitney square intersecting $S_i$ can be larger than $S_i$ itself. Hence if the Liouville measure of the curve itself is almost surely zero, we have:
\[\tilde{\mu}(S_i) \leq \sum_{Q \in \mathcal{W}_i}\tilde{\mu}(Q)\]
 where $\mathcal{W}_i$ denotes the collection of dyadic CR-Whitney squares intersecting the interior of $S_i$. 

As all CR-Whitney squares used are in fact inside $S_i$, it follows that
\[\sum_{Q \in \mathcal{W}_i}\tilde{\mu}(Q)^q \leq \sum_{\mathcal{W}_i}\tilde{\mu}(Q)\tilde{\mu}(Q)^{q-1}\]
But for $q < 1$, we have $\tilde{\mu}(Q)^{q-1} \geq \tilde{\mu}(S_i)^{q-1}$ and so
\[\sum_{Q \in \mathcal{W}_i}\tilde{\mu}(Q)^q \geq \tilde{\mu}(S_i)^q\]
Now as the collections of CR-Whitney squares $\mathcal{W}_i$ used to cover each dyadic square that intersects the SLE curve are disjoint, we have:
\[\sum_{S_i \in \mathcal{S}_n} \II(S_i \cap SLE \neq \emptyset)\tilde{\mu}(S_i)^q \leq \sum_i\sum_{Q \in \mathcal{W}_i}\tilde{\mu}(Q)^q \leq  \sum_{Q \in \mathcal{W}_{\geq n}} \tilde{\mu}(Q)^q\]
We can put everything together in expectation to get:
\[\EE_{SLE} \EE_h\left(\sum_{S_i \in \mathcal{S}_n} \II(S_i \cap SLE \neq \emptyset)\tilde{\mu}(S_i)^q\right) \leq \EE_{SLE} \EE_h\left(\sum_{Q \in \mathcal{W}_{\geq n}} \tilde{\mu}(Q)^q\right)\]
Plugging in the estimate from the last section, we obtain:
\[\EE_{SLE} \EE_h\left(M_q^Q(SLE,2^{-n})\right) \lesssim \frac{2^{-n\delta}}{1-2^\delta}\]
\[\limsup \limsup_{n \uparrow \infty} \EE_{SLE,h} \left(M^Q_q(SLE,2^{-n}) < \infty \right)\] 
Hence we see that $q_{M,E} < q$ for any $q$ such that there is a $\delta > 0$ with
\[(2+\gamma^2/2)q - \gamma^2(1-\kappa/4)^2q^2/2 = 1+\kappa/8 + \delta\] 
Now we can just let $\delta \downarrow 0$ to obtain the claimed upper bound. 

\begin{proof}[Proof of claim \ref{FLZ}]\mbox{}\\
It only remains to prove that the Liouville measure for the SLE$_\kappa$ flow lines with $\kappa < 8$ is zero. We do it using a global "no loss of mass" argument. As this involves several changes of integrals and limits, we have to be careful at all steps.

Denote by $D$ the unit disc. Pick $\epsilon \rightarrow 0$ along powers of two. Then from the definition of the Liouville measure \ref{Liouville}, we have
\begin{equation*}
\begin{split}
\mu(D) &= \lim_{\epsilon \rightarrow 0} \EE\int_{D} \mu_\epsilon(z)dz \\
& = \lim_{\epsilon \rightarrow 0}\int_{D} \EE \mu_\epsilon(z)dz\\
& = \int_{D}\lim_{\epsilon \rightarrow 0} \EE \mu_\epsilon(z)dz
\end{split}
\end{equation*}
Here, the second equality follows from Fubini and the third from dominated convergence. Now fix $m$ large and write $A_m$ for the event that the flow line avoids the $\epsilon^m$ ball around $z$, i.e. set $A_m = \{SLE \cap B_{\epsilon^m}(z) = \emptyset\}$. Then we can continue by writing
\begin{equation*}
\begin{split}
\mu(D) &= \int_{D}\lim_{\epsilon \rightarrow 0}\Big(\EE_{SLE}\EE_h(\mu_\epsilon(z)\II(A_m))+\EE_{SLE}\EE_h(\mu_\epsilon(z)\II(A_m^c))\Big)dz\\
& = \int_{D}\Big(\lim_{\epsilon \rightarrow 0} \EE_{SLE}\EE_h(\mu_\epsilon(z)\II(A_m))+\lim_{\epsilon \rightarrow 0}\EE_{SLE}\EE_h(\mu_\epsilon(z)\II(A_m^c)\Big)dz
\end{split}
\end{equation*}
By boundedness and positivity writing the limit of a sum as sum of limits is fine. We bound the second term using Cauchy-Schwarz:
\[\EE_{SLE}\EE_h\Big(\mu_\epsilon(z)\II(A_m^c)\Big) \leq \left(\EE\mu_\epsilon(z)^2\right)^{1/2}\PP(A_m^c)^{1/2}\]
But we know that $\PP(A_m^c) \asymp \epsilon^{m(1-\kappa/8)}$. By plugging in $\mu_\epsilon(z) = \epsilon^{\gamma^2/2}e^{h_\epsilon(z)}$ and using the exponential moments of Gaussians, we see that the first term is bounded by $\epsilon^{-\gamma^2/2}$. Thus the whole term is of order $O(\epsilon^{-\gamma^2/2+m/2(1-\kappa/8)})$ and by picking $m$ large enough, we can force it to be $o(\epsilon)$. But then 
\begin{equation*}
\begin{split}
\mu(D) & = \int_{D}\left(\lim_{\epsilon \rightarrow 0} \EE_{SLE}\EE_h(\mu_\epsilon(z)|A_m)\PP(A_m)+\lim_{\epsilon \rightarrow 0}\EE_{SLE}\EE_h(\mu_\epsilon(z)|A_m^c)\PP(A_m^c)\right)dz \\
&= o(\epsilon) + \int_{D}\lim_{\epsilon \rightarrow 0} \EE_{SLE}\EE_h(\mu_\epsilon(z)|A_m)\PP(A_m)dz\\
\end{split}
\end{equation*}
Here we have also integrated the error term over the domain that has bounded mass. 

Now notice that in the second term of the final expression, we never consider the mass on the curve itself. Yet there is no loss of total mass. Thus, in expectation, the mass on the curve is zero. Finally, the mass is clearly non-negative and hence it must be almost surely zero.
\end{proof}

\begin{remark}
In fact this is the claim where really the fractal geometry of the SLE, the coupling of GFF \& SLE and the construction of Liouville measure are all mixed together.
\end{remark}

\subsubsection{Lower bound}
The strategy is very similar, though small changes are needed at every step: 
\paragraph{Lower bound for a CR-Whitney square}\mbox{}\\
Again, to start off consider a dyadic square $Q$ of side-length $l(Q)$ and denote by $\mathcal{W}$ the collection of CR-Whitney squares of the unit disc cut by the SLE curve. We aim to provide a lower bound to
\[\EE_{SLE}\left[\EE_h\left( \tilde{\mu}(Q)^q\right)|Q \in \mathcal{W}\right]\]
where as before $\tilde{\mu} \asymp \mu(z)e^{-\gamma \chi w(z)}$ is informally the Liouville measure weighed by the winding. From lemma \ref{WB} we see that $w(z) \leq w(z_0) + C'$, where $z_0$ is the centre of $Q$.  So we can write
\[\EE_{SLE}\left[\EE_h\left( \tilde{\mu}(Q)^q\right)|Q \in \mathcal{W}\right] \asymp \EE_{SLE}\left[e^{-\gamma \chi q w(z_0)}\EE_h\left(\mu(Q)^q\right)|Q \in \mathcal{W}\right]\]
We need to lower bound $\EE_h\left(\mu(Q)^q\right)$
and this can be done using Kahane convexity inequalities \cite{KAHANE,RRV}, that reduce comparing the moments of balls in multiplicative chaos measures to a comparison of covariance kernels. 

To apply Kahane convexity inequalities directly, we need to change the regularization of the Liouville measure to use the exact variance, as used in the literature on the multiplicative chaos. Start by picking $\delta_n = 2^{-n}$ to get the regularization sequence for the construction of Liouville measure in theorem \ref{Liouville}. We have for $\delta_n < 0.01l(Q)$,
\[\mu_h(Q) = \lim_{\delta_n \downarrow 0} \mu_{h^{\delta_n}}(Q) = \lim_{\delta_n \downarrow 0} \int_{Q} \delta_n^{\gamma^2/2} e^{\gamma h_{\delta_n}(z)}dz \]
where $h_{\delta_n}(z)$ is a Gaussian field with the kernel
\[G_{\delta_n}(x,y) = \log \frac{1}{\delta_n \vee \abs{x-y}} + \tilde{G}(x,y)\]
Notice that as in the whole square we are at distance at least say $10\delta_n$ from the boundary, we indeed have inside our square $\tilde{G}(x,y) = \tilde{G}_{\delta_n}(x,y)$ where the former is the harmonic correction corresponding to the usual Green's function of the domain, and the latter is the harmonic correction corresponding to regularized Green's function.

Thus $\mu_h(Q)$ can be rewritten in terms of Gaussian multiplicative chaos as
\begin{equation}\label{murepr}
\mu_h(Q) \asymp l(Q)^{\gamma^2/2} \lim_{\delta_n \downarrow 0}\int_{Q} e^{\gamma h_{\delta_n}(z)-\gamma^2/2 \EE(h_{\delta_n}(z)^2)}dz
\end{equation}
We now consider two Gaussian fields $h_1$, $h_2$, with the covariance kernels respectively denoted by $G_1(x,y)$ and $G_2(x,y)$ and given as follows:
\[G_1(x,y) = G_(x,y) + \log\frac{1}{l(Q)} + C\]
for some constant $C$. Now, we take this constant from lemma \ref{lemG}. Thus when we define
\[G_2(x,y) = \log \frac{1}{\abs{x-y}}\]
Then we have that $G_2 \geq G_1$. Moreover, we can consider only sufficiently small Whitney squares such that $\log\frac{1}{l(Q)} + C$ is positive and hence $h_1$ can be written as a sum of the Gaussian free field and an independent Gaussian $Y$ of variance $\log \frac{1}{l(Q)} + C$. 
Now, by \cite{KAHANE,GMC} we know that the multiplicative chaos measures for these fields are nicely defined and we will denote them by just "$e^{h_1(z) - \EE(h_1(z)^2)}$" etc. Hence as $q < 1$ and thus $x \rightarrow x^q$ is concave, we have by Kahane convexity inequalities \cite{KAHANE,RRV}:
\[\EE \left(\int_{Q} e^{\gamma h_1(z)-\gamma^2/2 \EE(h_1(z)^2)}\right)^q \geq 
\EE \left(\int_{Q} e^{\gamma h_2(z) -\gamma^2/2 \EE(h_2(z)^2)}\right)^q\]
Using the fact that $h_1 = h + Y$, that $Y$ is an independent Gaussian and that $h, h_2$ satisfy the scaling relation \ref{scalinglemma} \cite{RRV}, we have
\[\EE \left(\int_{Q} e^{\gamma h(z)-\gamma^2/2 \EE(h(z)^2)}\right)^q \geq l(Q)^{2q}\]
Finally as $G_{\delta_n}(x,y) \leq G_(x,y)$ we can translate this back to the regularized field to get:
\[\EE \left(\int_{Q} e^{\gamma h_{\delta_n}(z)-\gamma^2/2 \EE(h_{\delta_n}(z)^2)}dz\right)^q \geq l(Q)^{2q}\]
and thus $\mu_h(Q) \lesssim l(Q)^{(2+\gamma^2/2)q}$
So taking the expectation w.r.t. SLE, we have
\[\EE_{SLE}\left[\EE_h\left( \tilde{\mu}(Q)^q\right)|Q \in \mathcal{W}\right] \gtrsim l(Q)^{(2+\gamma^2/2)q}\EE_{SLE}\left[e^{-\gamma \chi q w(z_0)}|Q \in \mathcal{W}\right]\]
But the as $Q$ has side-length $l(Q)$ and is conditioned to be a CR-Whitney square, we are conditioning on 
\[\CR(z_0,SLE) \in [4l(Q), 12l(Q)]\]
Hence using the theorem \ref{thmW}, we have 
\[\EE_{SLE}\left[e^{-\gamma \chi q w(z_0)}|Q \in \mathcal{W}\right] \gtrsim l(Q)^{-\gamma^2(1-\kappa/4)^2q^2/2}\]
Putting everything together, gives us 
\[\EE_{SLE}\left[\EE_h\left( \tilde{\mu}(Q)^q\right)|Q \in \mathcal{W}\right] \gtrsim l(Q)^{(2+\gamma^2/2)q - \gamma^2(1-\kappa/4)^2q^2/2}\]
\paragraph{Lower bound for Liouville measure over level-$n$ CR-Whitney squares}\mbox{}\\
This time we do not aim to bound the whole CR-Whitney decomposition, but are happy with analysing the collection of level-$n$ CR-Whitney squares $\mathcal{W}_n$. Moreover, we relax the definition of CR-Whitney square and call every dyadic square satisfying $4l(Q)\leq \CR(z_0) \leq 150l(Q)$ a CR-Whitney square, where as before $z_0$ is the centre of $Q$. The reason will become clear when we aim for the lower bound of the dyadic covering.

Write as earlier
\[\EE_{SLE}\EE_h\left(\sum_{\mathcal{W}_n} \tilde{\mu}(Q)^q\right) = \sum_Q\EE_{SLE}\left[\EE_h\left( \tilde{\mu}(Q)^q\right)|Q \in \mathcal{W}_n\right]\PP_{SLE}(Q \in \mathcal{W}_n)\]
and pick $1 > q > 0$, $\delta > 0$ with 
\[(2+\gamma^2/2)q - \gamma^2(1-\kappa/4)^2q^2/2 = 1+\kappa/8 - \delta\]

Now the probability of being a CR-Whitney square can be exactly calculated using the SLE Green's function \cite{LAWLERG}, and is still of order $O(1)l(Q)^{1-\kappa/8}$. Thus using this probability and the estimate on the CR-Whitney square itself we finally get
\[\EE_{SLE}\EE_h\left(\sum_{\mathcal{W}_n} \tilde{\mu}(Q)^q\right) \gtrsim  2^{2n}2^{-n(2-\delta)} \geq 2^{n\delta}\]
which is arbitrarily large for $n$ large.

\paragraph{Lower bound for the covering}\mbox{}\\
To make the final step from the lower bound on CR-Whitney squares to a lower bound on the covering, our idea is to locate at least one CR-Whitney square inside each dyadic square in the covering of the SLE. At first sight this might seem hard, because we would also need to handle the case when SLE almost fills the square. However, due to estimates of the SLE Green's function, it costs us nothing to require the SLE curve to leave some open space around the centre of the square, just enough to fill in some CR-Whitney squares.

To be more precise, notice first that in order for a dyadic square $S$ of side-length $l(S) = 2^{-n}$ to intersect the SLE curve, it suffices that the centre of this square has conformal radius less than $l(S)/2$. On the other hand we can also require the conformal radius to be more than $l(S)/3$ without changing the order of magnitude of our event \cite{LAWLERG}. 

Then a small geometrical calculation shows that all four dyadic squares of side-length $l(S)2^{-6}$ neighbouring the centre of square $S$ will necessarily be CR-Whitney square. This is of course also the reason for relaxing the definition of CR-squares in the previous section. 

The rest now follows easily. Indeed, cut $S_i$ first into four dyadic square $Q'_{i,j}$ with $j=1,2,3,4$ of sidelength $l(S_i)2^{-1}$. Then from Jensen applied to the concave function $x^q$:
\[\sum_{S_i \in \mathcal{D}_n} \II(SLE \cap S_i)\tilde{\mu}(S_i)^q \gtrsim \sum_{S_i \in \mathcal{D}_n} \II(SLE \cap S_i)\sum_{j=1,2,3,4}\tilde{\mu}(Q'_{i,j})^q\]
Now denote by $Q_{i,j}$ the corresponding dyadic squares of sidelength $l(S_i)2^{-1}$ that have the centre of $S_i$ as one corner. Thus
\[\sum_{j=1,2,3,4}\tilde{\mu}(Q'_{i,j})^q \geq \sum_{j=1,2,3,4}\tilde{\mu}(Q_{i,j})^q\]

But we saw above $\{SLE \cap S_i\} \supset \cup_{j=1,2,3,4}\{Q_{i,j} \in \mathcal{W}_{n+6}\}$ and so \[\II(SLE \cap S_i) \geq 1/4\sum_{j=1,2,3,4}\II(Q_{i,j} \in \mathcal{W}_{n+6})\]
Thus we can further lower bound the RHS by a sum over the CR-Whitney squares on level $n+6$ that are around the centre of a level $n$ dyadic square. When we denote this specific collection by $\mathcal{W'}_{n+6}$, we have:
\[\EE_{SLE}\EE_h\sum_{S_i \in \mathcal{D}_n}\II(SLE \cap S_i)\tilde{\mu}(S_i)^q \gtrsim \EE_{SLE}\EE_h\left(\sum_{\mathcal{W'}_{n+6}} \tilde{\mu}(Q)^q\right) \]
Now, $\mathcal{W'}_{n+6}$ forms a constant proportion of all CR-Whitney squares of size $n+6$, and thus we can use the previous estimate on the sum of $n$-th level Whitney squares.  Thus we get that for $n$ large enough
\[\EE_{SLE}\EE_h M_q^Q(SLE,2^{-n}) \gtrsim 2^{n\delta}\]
From this it follows that $q < q_{M,E}$ for any $q$ such that there is a $\delta > 0$ with
\[(2+\gamma^2/2)q - \gamma^2(1-\kappa/4)^2q^2/2 = 1+\kappa/8 - \delta\]
The lower bound for the expepcted quantum Minkowski dimension follows by taking $\delta \downarrow 0$. This also finishes the proof of the theorem \ref{thmD}.

\section{Further questions and speculations}

Finally, we list a few open questions to point in future directions and in hope that they could provoke some thought. We start from more realistic questions and finish in a more speculative spirit.

The first natural question is to what extent these results carry over to processes related to SLE$_\kappa$ that are also coupled to the GFF. Firstly, there are CLE processes. For example CLE$_4$ should describe the contour lines of the GFF. It is natural to expect that all the results from this paper should carry over for these CLE processes coupled with the field and in fact give the same KPZ-like relations. Indeed, as soon as exact coupling results have been published, one can probably answer the following question:
\begin{question}
Show that the KPZ type of relation for CLE loops is the same as above, and if possible, find also the KPZ type of relation for the CLE gasket.
\end{question}

Similarly, similarly to usual SLE$_\kappa$, the SLE$_{\kappa,\underline{\rho}}$ processes are also coupled with the GFF as flow lines \cite{IM}, which of course hints the following question:
\begin{question}
Determine the KPZ type of relation for all flow and counterflow lines of the GFF. 
\end{question}

What if we condition the SLE curve to pass closely through two different points, how are their windings related? It would be interesting to attack this question by trying to make use of the techniques developed for the SLE-GFF coupling, introduced in \cite{IM} and its subsequent papers. Answering the question might allow us to replace the expected quantum Minkowski dimension with an almost sure version:

\begin{question}
Find the exponential moments of $w_{z_1} + w_{z_2}$ conditioned on the SLE to pass close by points $z_1$ and $z_2$. 
\end{question}

We finish with a more general question. In this paper we showed that there is natural deviation from the KPZ relation. But by how much can one deviate? One would expect there to be non-trivial upper bounds at least for sufficiently small positive Hausdorff dimension, as then they cannot be filled with $\gamma-$thick points. Similarly one would hope for non-trivial lower bounds for at least large enough Hausdorff dimension. 
\begin{question}\label{questB}
Find best bounds $ub(\gamma,q)$ and $lb(\gamma,q)$ such that for any $0 \leq \gamma < 2$ and $0 \leq q \leq 2$ and any (possibly field-dependent) set $A$ of Hausdorff dimension $q_H(A) = q$, we have $lb(\gamma,q) \leq q^Q_H(A) \leq ub(\gamma,q)$ where the quantum dimensions are defined with respect to the $\mu_\gamma$ Liouville measure.
\end{question}

{\small
\bibliographystyle{plain}
\bibliography{biblioS}
}
\section{Appendix}

\begin{proof}[Proof of lemma \ref{lemB}]
The proof is the first moment argument in \cite{BEF}, with two slight differences: 1) we follow the evolution of the conformal radius and not the distance itself 2) we also follow the time evolution of winding. The basic strategy is the following: we transform our chordal SLE in $\HH$ to a process in $\mathbb{D}$ for which the image of $z_0$ is fixed. Then pick a convenient time change, and study the process induced for the driving Brownian motion. As in \cite{BEF} one works with the map $g_t(z)$ instead of $f_t(z)$ and we want to keep close to his exposition, we first remark that for the question of winding as defined in \ref{defW} this is equivalent - $g_t'(z)$ is equal to $f_t'(z)$.

\paragraph{Fixing the image of $z_0$}\mbox{}\\
Denote by $H_t = \HH\backslash K_t$ the SLE slit domain and consider the map $\tilde{g_t}:  H_t \rightarrow \mathbb{D}$ from the slit domain to the unit disc, given by
\[\tilde{g_t}: z \rightarrow \frac{g_t(z) - g_t(z_0)}{g_t(z) - \overline{g_t(z_0)}}\]
It maps $\infty \rightarrow 1$ and $z_0 \rightarrow 0$. We have that 
\[\log \tilde{g_t}'(z) = \log g_t'(z) - \log (g_t(z) - \overline{g_t(z_0)})\]
First of all, one can see that the conformal radius 
\[\CR(z_0,H_t) = \frac{1}{\abs{\tilde{g_t}'(z_0)}}\]
Second, we have that
\[\arg \tilde{g_t}'(z_0) = \arg g_t'(z_0) -\pi/2\]
Hence $\partial_t w(z_0) = \partial_t\arg \tilde{g_t}'(z_0)$ and hence we can concentrate on studying $\arg \tilde{g_t}'(z_0)$.

The driving function of the Loewner chain maps to a process on the unit circle by:
\[\tilde{\beta_t} = \frac{\beta_t - g_t(z_0)}{\beta_t - \overline{g_t(z_0)}}\]
Defining a time change
\[ds = \frac{(\tilde{\beta_t}-1)^4}{\abs{g_t(z_0) - \overline{g_t(z_0)}}^2\tilde{\beta_t}^2}dt\]
it is shown in \cite{BEF} that we can write the time evolution of $\tilde{g_t}$ 
as $h_s = \tilde{g_t}(s)$ where $h_s$ satisfies the following equation:
\[\partial_s h_s(z) = \frac{2\tilde{\beta_t}h_s(z)(h_s(z)-1)}{(1-\tilde{\beta_t})(h_s(z) - \tilde{\beta_t})}\]

Now differentiating this with respect to $s$ at $z = z_0$, we get
\[\partial_s h'_s(z_0) = \frac{2h'_s(z_0)}{1 - \tilde{\beta_s}}\]
Hence 
\[\partial_s \log h'_s(z_0) = \frac{2}{1 - \tilde{\beta_s}}\]
From here two things follow. Firstly, as 
\[\CR(z_0,H_t) = \frac{1}{\abs{h'_s(z_0)}}\]
and 
\[\partial_s \log \abs{h'_s(z_0)} = 1\]
we can follow the evolution of the conformal radius very exactly: 
\begin{equation}\label{eqR}
\partial_s \log \CR(z_0,H_t) = -1
\end{equation}
Secondly, after writing $\beta_s = \exp(i\alpha_s)$, a small calculation gives that 
we can also follow the winding:
\begin{equation}\label{eq1}
\partial_s \arg h'_s(z_0) = \cot \frac{\alpha_s}{2}
\end{equation}

Hence, everything is at our hand as soon as we understand the transformed driving process $\alpha_s$.

\paragraph{The diffusion of the driving process}\mbox{}\\
Indeed, putting faith in \cite{BEF}, Ito's formula gives that $\alpha_s$ defined as above by $\beta_s = \exp(i\alpha_s)$ is a diffusion in $(0,2\pi)$ starting from $\alpha_0 = 2\arg g_t(z_0)$ and satisfying the following stochastic differential equation:
\[d\alpha_s = \sqrt{\kappa}dB_s + \frac{\kappa - 4}{2}\cot \frac{\alpha_s}{2}ds\]
where $B_s$ is a standard 1D Brownian Motion. This is well-defined \& omits a unique strong solution up to the first exit-time. 

As for $\kappa < 4$ the drift term is attractive towards the boundary, then comparing to Brownian motion, one can conclude that the exit time $\tau$ for the diffusion is almost surely finite. Moreover, looking at $\ref{eqR}$, we can put the hitting time in exact correspondence with the conformal radius. Indeed, we have
 \[\CR(z_0,H_t) = \CR(z_0,\HH)e^{-\tau}\]
Moreover, from \eqref{eq1} the claimed form for the winding also follows:
\[w(z_0) = \int_0^\tau \partial_s \arg h'_s(z_0) = \int_0^\tau \cot \frac{\alpha_s}{2}ds\]
\end{proof}

\end{document}